\newcommand\Vol{{\operatorname{Vol}}}
\newcommand\rank{{\operatorname{rank}}}
\newcommand\R{{\mathbf{R}}}
\renewcommand\P{{\mathbf{P}}}
\newcommand\E{{\mathbf{E}}}
\newcommand\Z{{\mathbf{Z}}}
\newcommand\col{{\mathbf{c}}}
\newcommand\row{{\mathbf{r}}}
\newcommand\eps{\varepsilon}
\newcommand\ep{\epsilon}
\theoremstyle{plain}
  \newtheorem{theorem}[subsection]{Theorem}
  \newtheorem{conjecture}[subsection]{Conjecture}
  \newtheorem{lemma}[subsection]{Lemma}
  \newtheorem{question}[subsection]{Question}
  \newtheorem{example}[subsection]{Example}
\theoremstyle{remark}
  \newtheorem{remark}[subsection]{Remark}
\theoremstyle{definition}
  \newtheorem{definition}[subsection]{Definition}
\begin{document}

\title[Inverse Littlewood-Offord Problems]{ Inverse Littlewood-Offord problems and the singularity of random symmetric matrices}
%\date{Note}

\author{Hoi H. Nguyen}
\address{Department of Mathematics, University of Pennsylvania, 209 South 33rd Street, Philadelphia, PA 19104, USA}
\email{hoing@math.upenn.edu}
\subjclass[2000]{11B25}

\begin{abstract} Let $M_n$ denote a random symmetric $n$ by $n$ matrix, whose upper diagonal entries are iid Bernoulli random variables (which take value $-1$ and $1$ with probability 1/2). Improving the earlier result by Costello, Tao and Vu \cite{CTV}, we show that $M_n$ is non-singular with probability $1-O(n^{-C})$ for any positive constant $C$. The proof uses an inverse Littlewood-Offord result for quadratic forms, which is of interest of its own. 
\end{abstract}

\maketitle

\section{Introduction}\label{section:introduction}

Let $A_n$ denote a random $n$ by $n$ matrix, whose entries are iid Bernoulli random
variables which take values $\pm 1$ with probability 1/2. Let $p_n$ be the probability that $A_n$ is singular. A classical result of Koml\'os \cite{B,K} shows

\begin{equation}\label{eqn:Komlos}
p_n=O(n^{-1/2}).
\end{equation}

By considering the event that two rows or two columns of $A_n$ are equal (up to a sign), it is clear that 

$$p_n\ge (1+o(1))n^2 2^{1-n}.$$ 

It has been conjectured by many researchers that in fact this bound is best possible.

\begin{conjecture}\label{conjecture:1}

$$p_n=(\frac{1}{2}+o(1))^n.$$

\end{conjecture}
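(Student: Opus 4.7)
Since the lower bound $p_n \ge (1+o(1))n^2 2^{1-n} \approx (1/2)^n$ comes from the event that two rows (or two columns) agree up to a sign, the plan is to show that this event essentially accounts for all singularities. The natural framework is the standard row-exposure argument combined with a sharp inverse Littlewood--Offord analysis. I would first write, by conditioning on the first $n-1$ rows,
\[
\Pr(A_n \text{ singular}) \le \Pr(\row_1, \dots, \row_{n-1} \text{ dependent}) + \E\,\rho(\Bv),
\]
where $\Bv = \Bv(\row_1, \dots, \row_{n-1})$ is the (almost surely unique, up to scaling) unit normal to the hyperplane spanned by the first $n-1$ rows, and $\rho(\Bv) = \sup_t \Pr(\sum_i \xi_i v_i = t)$ is the concentration function of $\Bv$ against a $\pm 1$ row. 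The first term is handled inductively (and is smaller than the second), so the heart of the matter is proving $\E\,\rho(\Bv) \le (1/2+o(1))^n$.

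The core step is a structural dichotomy for $\Bv$. Fix a small $\delta > 0$ and call $\Bv$ \emph{exceptional} if $\rho(\Bv) \ge 2^{-(1-\delta)n}$. An inverse Littlewood--Offord theorem, in the spirit of Tao--Vu, Nguyen--Vu, and of the present paper, implies that any exceptional $\Bv$ lies, up to scaling and a tiny perturbation, in a generalized arithmetic progression of small rank and small volume. Consequently, the exceptional vectors can be covered by an explicit net $\CN$ with $|\CN| \le 2^{o(n)}$. For any fixed non-exceptional $\Bv$, the probability that $\Bv$ is orthogonal to $n-1$ independent random $\pm 1$ rows is at most $\rho(\Bv)^{n-1} \le 2^{-(1-\delta)n(n-1)}$, so the total contribution from non-exceptional $\Bv$'s (summed over a net of the unit sphere of size $2^{O(n)}$) is overwhelmingly smaller than $(1/2)^n$.

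The main obstacle -- and the reason the conjecture is hard -- is pinning down the sharp constant $1/2$ in the exceptional regime. The true extremizers of $\rho$ are essentially $\{-1,+1\}$-valued vectors, and the extremal value $\rho = 1/2$ is attained by $\Be_i \pm \Be_j$, which corresponds exactly to the ``two equal rows'' lower-bound event. Matching the constant therefore requires classifying the possible null vectors $\Bv$ with $\rho(\Bv) > 2^{-(1+o(1))n}$ \emph{essentially exactly}, not merely up to a coarse GAP-cover, and then showing that the only way such a $\Bv$ can arise as the normal to $n-1$ random rows is if two of those rows coincide up to a sign. I would approach this by an induction on $n$ together with a case analysis on the multiplicity profile of the largest coordinates of $\Bv$: conditional on $\rho(\Bv)$ being within a subexponential factor of $2^{-n}$, one expects the coordinates of $\Bv$ to take only two values each with multiplicity $\approx n/2$, from which the two-equal-rows structure of the rows can be read off. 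Producing a ``second-generation'' inverse Littlewood--Offord statement sharp enough to drive this step -- rather than the $\delta$-loss statements currently available -- is the essential missing ingredient, and the place where I expect to spend most of the effort.
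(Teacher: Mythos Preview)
The statement you are attempting to prove is a \emph{conjecture}, not a theorem: the paper states it as an open problem and offers no proof whatsoever. There is therefore nothing in the paper to compare your proposal against. The paper records that the best known upper bound (at the time of writing) is $p_n = O((1/\sqrt{2})^n)$, due to Bourgain, Vu and Wood; the sharp constant $1/2$ remains out of reach.

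Your sketch is not a proof but an outline of the standard inverse-Littlewood--Offord strategy, together with a candid admission that the decisive step is missing. You yourself identify the gap: obtaining the exact constant requires a ``second-generation'' inverse theorem that classifies vectors $\Bv$ with $\rho(\Bv) \ge 2^{-(1+o(1))n}$ essentially exactly, and no such theorem exists. This is precisely why the conjecture is open. A couple of the details in the sketch are also garbled: in the non-exceptional case one does not sum over a net of the sphere (that net has $2^{\Theta(n)}$ elements, destroying any gain); rather, one simply bounds $\E[\rho(\Bv)\,\mathbf{1}_{\text{non-exc.}}] \le 2^{-(1-\delta)n}$ directly. And the heuristic that extremal $\Bv$ must have coordinates taking only two values each with multiplicity $\approx n/2$ is not correct either --- the vectors $\Be_i \pm \Be_j$ you mention have two nonzero coordinates, not $n$. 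In short: this is an outline of an attack on an open problem, not a proof, and the paper makes no claim to the contrary.
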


In a breakthrough paper, Kahn, Koml\'os and Szemer\'edi \cite{KKSz} proved that

$$p_n= O(.999^n).$$

Another significant improvement is due to Tao and Vu \cite{TVsing}, who used inverse theory from additive combinatorics to show that  $p_n=O((3/4)^n)$. The most recent record is due to Bourgain, Vu and Wood \cite{BVW}, who improved it to $p_n=O((1/\sqrt{2})^n)$.

Another popular model of random matrices is that of random symmetric matrices;
this is one of the simplest models that has non-trivial correlations between the matrix
entries. Let $M_n$ denote a random symmetric $n$ by $n$ matrix, whose upper diagonal
entries are iid Bernoulli random variables. 

Let $q_n$ be the probability that $M_n$ is singular. Despite its obvious similarity to $p_n$, less is known concerning the bound for $q_n$. A significant new difficulty is that the symmetry ensures that the determinant $\det(M_n)$ is a quadratic function of each row, as opposed to $\det(A_n)$ which is a linear function of each row. 

As far as we can trace, the question to determine whether $q_n$ tends to zero together with $n$ was first posed by Weiss in the early nineties. This simple looking question had been open until a recent breakthrough paper by Costello, Tao and Vu \cite{CTV}, who showed

$$q_n= n^{-1/8+o(1)}.$$

To prove this result, Costello, Tao and Vu introduced and studied a {\it quadratic} variant of the classical Erd\H{o}s-Littlewood-Offord inequality concerning the concentration of random variables. Note that this classical inequality plays a key role in the work of Koml\'os to establish \eqref{eqn:Komlos}.   
 
Although the bound $q_n=n^{-1/8+o(1)}$ can be improved further by applying the more recent inequalities from \cite{C}, it seems that the approach developed by Costello, Tao and Vu cannot give any bound better than $n^{-1/2+o(1)}$.

In this paper we show that $q_n$ decays faster than any polynomial in $n$.

\begin{theorem}[Main theorem] \label{theorem:singularity} We have
$$q_n= O(n^{-C})$$ 

for any positive constant $C$, where the implied constant depends on $C$. 
\end{theorem}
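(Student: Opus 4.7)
The plan is to expose the top-left $(n-1) \times (n-1)$ block of $M_n$ and reduce the singularity question to a concentration problem for a quadratic form in the remaining random entries, to which a new inverse Littlewood-Offord theorem for quadratic forms can be applied. Writing
\[
M_n = \begin{pmatrix} M_{n-1} & X \\ X^T & \xi \end{pmatrix}
\]
with $M_{n-1}$ a random symmetric Bernoulli matrix of size $n-1$, $X \in \{-1,+1\}^{n-1}$ independent of $M_{n-1}$, and $\xi \in \{-1,+1\}$ independent of both, the Schur complement identity gives, whenever $M_{n-1}$ is non-singular,
\[
\det(M_n) = \det(M_{n-1})\bigl(\xi - X^T Q X\bigr), \qquad Q := M_{n-1}^{-1}.
\]
Since I intend to prove the stronger inductive statement that $q_m$ decays faster than any polynomial in $m$, I may assume $M_{n-1}$ is non-singular on an event of probability at least $1 - O(n^{-C-1})$. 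Consequently, it suffices to show that for $M_{n-1}$ outside an exceptional event of probability $O(n^{-C})$, the concentration $\sup_c \Pr(X^T Q X = c)$ is at most $O(n^{-C-1})$.

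The heart of the argument is the inverse Littlewood-Offord theorem for quadratic forms alluded to in the abstract. Its rough shape is: if a symmetric matrix $Q$ yields a quadratic form $X \mapsto X^T Q X$ whose concentration probability exceeds $n^{-C}$, then $Q$ has strong additive structure, in the sense that after restricting to an index set $I \subseteq \{1,\dots,n-1\}$ of nearly full size, every row $(Q_{ij})_{j \in I}$ with $i \in I$ lies in a symmetric generalized arithmetic progression (GAP) of bounded rank and cardinality $n^{O_C(1)}$. To prove such an inverse theorem, I would \emph{decouple} by splitting the coordinates of $X$ into two disjoint halves $Y$ and $Z$ and conditioning on $Y$: then $X^T Q X$ becomes an affine-linear form in $Z$ whose coefficients are essentially the rows of $Q$ contracted with $Y$. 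The classical inverse Littlewood-Offord theorem of Tao-Vu, applied to these linear forms for most choices of $Y$, forces additive structure on the corresponding linear combinations of rows of $Q$, and averaging over $Y$ should extract structure on $Q$ itself.

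With the inverse theorem in hand, the remaining task is to show that $Q = M_{n-1}^{-1}$ rarely has such structure when $M_{n-1}$ is a random symmetric Bernoulli matrix. Fix a candidate symmetric GAP $P$ with the allowed parameters; there are only $n^{O_C(1)}$ essentially distinct such $P$. If the rows of $Q$ lie in $P$, then the identity $M_{n-1} Q = I$ forces each row of $M_{n-1}$ to be orthogonal to a prescribed structured set of vectors (up to a diagonal correction). A swap argument — reveal all but a few entries of a distinguished row of $M_{n-1}$ (which, by symmetry, also determines a column) and estimate the conditional probability that the remaining random entries produce a $Q$ with rows in $P$ — together with the smallness of $P$ gives the desired bound of $O(n^{-C-1})$ for each fixed $P$. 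A union bound over the $n^{O_C(1)}$ admissible structures then completes the proof.

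The principal obstacle is the inverse Littlewood-Offord theorem for quadratic forms: the decoupling step destroys the symmetry of $Q$ and loses a nontrivial fraction of the quadratic concentration, so one must iterate carefully and, crucially, convert the many linear structural statements (one for each conditioning of $Y$) into a coherent global structure on the symmetric matrix $Q$. A secondary technical issue is ensuring that $M_{n-1}^{-1}$ has polynomially controlled operator norm on the relevant high-probability event; this can be handled by combining the inductive singularity bound with a polynomial-in-$n$ lower bound for the least singular value, the latter itself following from a small-ball estimate for random symmetric Bernoulli matrices.
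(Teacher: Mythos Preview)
Your high-level strategy matches the paper: apply an inverse Littlewood--Offord theorem for quadratic forms to the expansion of $\det(M_n)$ along the last row/column, deduce that some object attached to $M_{n-1}$ has strong additive structure, and then show that this structured event is rare for random $M_{n-1}$. The decoupling idea you describe for the inverse theorem is also essentially what the paper does.

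However, the second half of your plan --- the ``rarity'' step --- has a genuine gap. You propose to fix one of the $n^{O_C(1)}$ candidate GAPs $P$, and then bound the probability that the rows of $Q=M_{n-1}^{-1}$ lie in $P$ via a swap argument. This does not work as stated: the condition ``rows of $Q$ lie in $P$'' is an event involving all entries of $M_{n-1}$ simultaneously through the inverse, and a union bound over GAPs is useless unless you can reduce to a union over \emph{vectors}, whose count is a priori $|P|^{n-1}$, not $n^{O_C(1)}$. What the paper actually does (Lemmas~\ref{lemma:normalvector:linear}, \ref{lemma:normalvector:quadratic} and Section~\ref{section:normalvector:quadratic}) is to extract from the structured cofactor matrix a \emph{single} nonzero vector $u$ that (i) has all but $n^\ep$ components in a GAP of size $n^{O_{C,\ep}(1)}$, (ii) has all components rational with height $n^{O_{C,\ep}(n^\ep)}$, and (iii) is orthogonal to $n-O_{C,\ep}(n^\ep)$ rows of $M_{n-1}$. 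The rarity argument then becomes: for each candidate $u$, the probability that $n-O(n^\ep)$ rows of $M_{n-1}$ are orthogonal to $u$ is roughly $\prod_i \rho_i(u)$, and one must balance this against the number of admissible $u$. This balance is delicate (Section~\ref{section:singularity:completion}): the paper introduces ``concentration sequences'' to stratify the $u$'s so that on each stratum the product $(\text{count of }u)\times \prod_i\rho_i(u)$ is $\le n^{-\Omega(n^{1-\delta})}$, exploiting precisely the quantitative form of Theorem~\ref{theorem:ILO} that $|Q_i|\cdot\rho_i \lesssim n^{-\ep/2}$.

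Two smaller points. First, the output of the quadratic inverse theorem is not simply ``rows of $Q$ lie in a GAP''; there is an additional low-rank algebraic correction (see Examples~\ref{example:quadratic:2}--\ref{example:quadratic:3} and Theorem~\ref{theorem:ILO:quadratic:differentGAP}), and stripping it off is part of the work in Section~\ref{section:normalvector:quadratic}. Second, the paper avoids both your induction on $n$ and any least-singular-value input: it handles rank-deficient $M_{n-1}$ directly via a rank-increase lemma (Lemma~\ref{lemma:rankincrease}) and a separate treatment of the corank-one case (Lemma~\ref{lemma:normalvector:linear}), so no control on $\|M_{n-1}^{-1}\|$ is needed.
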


One may hope to combine our approach and the ''replacement technique'' from \cite{KKSz} and \cite{TVsing} to improve the bound further to exponential decay. However, we have not been able to do so. It is commonly believed that (see \cite{V})

\begin{conjecture}
$$q_n=(\frac{1}{2}+o(1))^n.$$
\end{conjecture}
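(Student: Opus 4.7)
The plan is to upgrade the polynomial-precision machinery behind Theorem~\ref{theorem:singularity} to exponential precision, combining an optimal inverse Littlewood-Offord theorem for quadratic forms with a symmetric analogue of the swap/replacement technique of Kahn-Koml\'os-Szemer\'edi \cite{KKSz} and Tao-Vu \cite{TVsing}. The matching lower bound $q_n \geq (1+o(1))n^2 2^{1-n}$ already comes from near-equal row or column pairs, so any successful upper bound must isolate a leading contribution of the same pair-collision type, with all other configurations contributing strictly less than $(1/2)^n$.

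The first ingredient would be a Hal\'asz-sharp quadratic inverse theorem of the following form: if the symmetric quadratic form $Q(\bx) = \bx^T \BA \bx$ on $\bx\in\{-1,+1\}^n$ satisfies $\sup_v \Pr(Q(\bx)=v) \geq (2-\eps)^{-n}$, then after discarding $o(n)$ coordinates the rows of $\BA$ lie in a symmetric generalized arithmetic progression of rank $O_\eps(1)$ and volume $2^{O_\eps(n)}$. Such an estimate should come from iterating Hal\'asz's Fourier moment inequality on the decoupled form $Q(\bx)-Q(\by)$ (using the decoupling trick of \cite{CTV}) and then applying Pl\"unnecke-Ruzsa sumset bounds to the resulting additive energies. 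Granting this, one exposes all but the last row and column of $M_n$: the determinant becomes a quadratic form $Q(\bx)$ in the last row $\bx$, with coefficient matrix built from cofactors of the principal $(n-1)\times(n-1)$ minor. If $Q$ is unstructured in the sense above, the quadratic inverse theorem directly yields $\Pr(\det M_n=0 \mid \text{previous rows}) \leq (1/2+\eps)^n$, and a union bound over exposure orders closes the argument.

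The difficult case is when $Q$ \emph{is} structured. In the non-symmetric setting, Kahn-Koml\'os-Szemer\'edi handle this by swapping the offending row with a fresh independent Bernoulli vector, which preserves the joint distribution of the remaining rows and allows the argument to iterate in dimension $n-1$. For $M_n$, any row swap simultaneously modifies the transposed column, producing a rank-two correlated perturbation of the cofactor matrix that destroys the independence used in both Hal\'asz's inequality and in the replacement step. The main obstacle---and the reason the exponential bound remains conjectural---is to design a symmetric replacement scheme (for instance via conjugation-pair swaps, or rank-one symmetric surgeries along a carefully chosen direction) together with an inverse Littlewood-Offord theorem that is stable under such joint row-and-column perturbations. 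Achieving this compatibility, and tracking the $\Theta(n^2 2^{-n})$ contribution from coincident row pairs with enough precision to pin down the constant $1/2$, is essentially the content of the conjecture; a more modest intermediate goal would be to push the method through to any bound of the form $q_n \leq \alpha^n$ with $\alpha<1$.
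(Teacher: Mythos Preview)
The statement you are attempting is a \emph{conjecture} in the paper, not a theorem: the paper explicitly states that it has \emph{not} been able to combine its inverse-Littlewood-Offord machinery with the replacement technique of \cite{KKSz,TVsing} to obtain exponential decay, and records $q_n=(1/2+o(1))^n$ as an open problem. There is therefore no proof in the paper to compare against.

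Your proposal is likewise not a proof but a research outline, and you are candid about this: you write that the symmetric replacement step ``is essentially the content of the conjecture'' and that even $q_n\le\alpha^n$ for some $\alpha<1$ would be an intermediate goal. Your diagnosis of the obstruction---that swapping a row in a symmetric matrix forces a simultaneous column swap, producing a rank-two correlated perturbation that breaks the independence used in both the Hal\'asz step and the replacement iteration---is accurate and matches the paper's own acknowledgment of the difficulty. The ``Hal\'asz-sharp quadratic inverse theorem'' you posit in your first ingredient is also not available: the paper's Theorems~\ref{theorem:ILO:quadratic:differentGAP} and~\ref{theorem:ILO:quadratic:commonGAP} operate only in the regime $\rho_q(A)\ge n^{-C}$, and the extension to concentration probabilities as small as $(2-\eps)^{-n}$ is itself a substantial open problem (even in the linear case this is the content of the exponential-scale inverse theorems behind \cite{TVsing,BVW}, and no quadratic analogue is known). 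So both of your two main ingredients are conjectural, not merely technical.

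In short: there is no gap to name because there is no claimed proof on either side. Your write-up is a reasonable summary of why the conjecture is hard, and it is consistent with the paper's own remarks, but it should be labeled as heuristic discussion rather than as a proof attempt.
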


{\bf Notation}. Here and later, asymptotic notations  such as $O, \Omega, \Theta$, and so for, are used under the assumption that $n \rightarrow \infty$. A notation such as $O_{C}(.)$ emphasizes that the hidden constant in $O$ depends on $C$. If $a= \Omega (b)$, we write $b \ll a$ or $a \gg b$. 

For a matrix $A$ we use the notations $\row_i(A)$ and $\col_j(A)$ to denote its $i$-th row and $j$-th column respectively; we use the notation $A(ij)$ to denote its $ij$ entry.

\section{The approach} Let $x=(x_1,\dots,x_n)$ be the first row of $M_{n}$, and $a_{ij}, 2\le i,j\le n$, be the cofactors of $M_{n-1}$ obtained by removing $x$ and $x^T$ from $M_n$. We have 

$$\det(M_n)= x_1^2 \det(M_{n-1}) + \sum_{2\le i,j \le n} a_{ij}x_ix_j.$$

Roughly speaking, the main approach of \cite{CTV} is to show that with high probability (with respect to $M_{n-1}$) most of the $a_{ij}$ are nonzero. It then follows that, via the so called quadratic Littlewood-Offord inequality (Theorem \ref{theorem:quadratic:LOinequality}), 

$$\P_x(\det(M_n)=0) =n^{-1/8+o(1)}.$$  

In this paper we adapt the reversed approach, which consists of two main steps outlined below.

\begin{enumerate}

\item If $\P_x(\det(M_n)=0|M_{n-1}) \ge n^{-O(1)}$, then there is a strong additive structure among the cofactors $a_{ij}$.

\vskip .1in 
  
\item With respect to $M_{n-1}$, a strong additive structure among the $a_{ij}$ occurs with negligible probability.   
\end{enumerate}

The first step, which is at the heart of our paper, concentrates on the study of inverse Littlewood-Offord problem for linear forms and quadratic forms. We will provide an almost complete answer to this problem throughout Section \ref{section:linearILO}, \ref{section:bilinearILO}, and \ref{section:quadraticILO}.

For the rest of this section we sketch the proof of Theorem \ref{theorem:singularity}.

We first show that it is enough to consider the case of $M_n$ having rank $n-1$, thanks to the following result.

\begin{lemma}\label{lemma:rankincrease}  For any $1\le k \le n-2$,
$$\P(\rank(M_n)=k \le n-2) \le  0.1 \times \P(\rank (M_{2n-k-1})=2n-k-2).$$
\end{lemma}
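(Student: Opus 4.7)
The plan is a rank-increment argument. Set $d := n-k \geq 2$, so that $2n-k-1 = n+d-1$ and $2n-k-2 = n+d-2$; the right-hand side is then $\Pr(\rank M_{n+d-1} = n+d-2)$, i.e., the probability that the larger matrix has corank exactly $1$. The idea is to view $M_{2n-k-1}$ as $M_n$ augmented by $d-1$ additional symmetric rows/columns with iid Bernoulli entries (independent of $M_n$), and to track how the corank evolves under these augmentations.

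The deterministic heart of the argument is a short block / Schur-complement computation showing that adding one symmetric row/column to a symmetric matrix $N$ of corank $c \ge 1$ produces a new matrix whose corank lies in $\{c-1,c,c+1\}$, with the corank \emph{strictly dropping} iff the new row $w$ does not lie in $(\ker N)^{\perp}$. (When $w$ does lie in $(\ker N)^{\perp}$, the new diagonal entry decides between the remaining two cases.) Hence, starting from $\rank M_n = k$, one has $\rank M_{2n-k-1} = 2n-k-2$ precisely when every one of the $d-1$ augmentations triggers the drop case.

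For the probabilistic input, I would invoke the Odlyzko-type bound $|V\cap\{-1,1\}^m|\le 2^{\dim V}$ (proved by picking $\dim V$ coordinates on which the projection from $V$ to $\R^{\dim V}$ is injective), which yields $\Pr(w\in K^{\perp})\le 2^{-\dim K}$ for any fixed subspace $K\subseteq\R^m$ and iid Bernoulli $w\in\{-1,1\}^m$. Applying this conditionally along the chain of augmentations, with kernel dimensions descending from $d$ down to $2$, the probability that every augmentation drops the corank is at least $\prod_{j=2}^{d}(1-2^{-j})$, which is uniformly bounded below by a positive absolute constant. This yields
\[
\Pr(\rank M_{2n-k-1}=2n-k-2)\;\ge\;\Bigl[\prod_{j=2}^{d}(1-2^{-j})\Bigr]\cdot\Pr(\rank M_n=k),
\]
which rearranges to an inequality of the desired form.

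The main technical difficulty will be the careful conditioning: at each stage the kernel is a random subspace depending on the previously added rows, so the Odlyzko bound must be applied conditional on the history before the multiplicative estimate is assembled. A secondary, more delicate issue is recovering the specific numerical constant $0.1$ in the statement: the Euler product above yields only a factor of roughly $1/0.58 \approx 1.73$, so matching $0.1$ likely requires either sharper Littlewood--Offord-type anti-concentration inputs (exploiting that the random kernel $K$ typically has rich additive structure, making $\Pr(w\in K^\perp)$ much smaller than the naive $2^{-\dim K}$) or a refined combinatorial argument that also exploits the randomness of the new diagonal entries.
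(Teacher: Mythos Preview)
Your approach is essentially identical to the paper's: augment $M_n$ by $d-1=n-k-1$ symmetric rows/columns one at a time and use Odlyzko's lemma to show that each augmentation increases the rank by $2$ (equivalently, decreases the corank by $1$) with conditional probability at least $1-2^{-c}$, where $c$ is the current corank. Your Schur-complement trichotomy and the criterion ``corank drops iff $w\notin(\ker N)^{\perp}$'' are correct, and your product $\prod_{j=2}^{d}(1-2^{-j})$ matches (up to a harmless off-by-one in the paper's displayed product) exactly what the paper obtains.

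Your worry about recovering the specific constant $0.1$ is misplaced, and stems from a typo in the lemma as stated. What the paper's proof actually establishes is
\[
\P\bigl(\rank(M_{2n-k-1})=2n-k-2\;\big|\;\rank(M_n)=k\bigr)\ \ge\ 0.1,
\]
which rearranges to $\P(\rank M_n=k)\le 10\cdot\P(\rank M_{2n-k-1}=2n-k-2)$, not $\le 0.1\cdot\P(\ldots)$. Your computation already gives this, in fact with the sharper constant $\bigl[\prod_{j\ge 2}(1-2^{-j})\bigr]^{-1}\approx 1.73$. No additional Littlewood--Offord input is needed; the lemma is only used to reduce the singularity problem to the corank-one case, for which any absolute constant suffices. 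So you should drop the final paragraph about sharper anti-concentration and simply record the inequality with the correct constant.
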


We deduce Lemma \ref{lemma:rankincrease} from a useful observation by Odlyzko. 

\begin{lemma}[Odlyzko's lemma,\cite{O}]\label{lemma:Odlyzko}
Let $H$ be a linear subspace in $\R^n$ of dimension at most $k\le n$. Then it contains at most $2^k$ vectors from $\{-1,1\}^n$.
\end{lemma}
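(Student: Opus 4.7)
The plan is the standard coordinate-projection argument: exhibit $k$ coordinates on which $H$ projects injectively, and then observe that every element of $H \cap \{-1,1\}^n$ is determined by a $\pm 1$ string of length $k$, of which there are only $2^k$.

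Assume without loss of generality that $\dim H = k$, as the bound only strengthens when $\dim H < k$. Fix a basis $w_1, \dots, w_k$ of $H$ and assemble the $w_j$ as columns of an $n \times k$ matrix $W$; since $W$ has full column rank $k$, there exist row indices $i_1 < \cdots < i_k$ for which the $k \times k$ submatrix $W'$ cut out by those rows is invertible. Let $\pi \colon \R^n \to \R^k$ be the coordinate projection $\pi(x) = (x_{i_1}, \dots, x_{i_k})$.

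For any $h = \sum_{j=1}^{k} c_j w_j \in H$, we have $\pi(h) = W'(c_1, \dots, c_k)^T$, so invertibility of $W'$ recovers the coefficients uniquely from $\pi(h)$; in particular $\pi|_H$ is injective. Restricting this injection to $H \cap \{-1,1\}^n$ and noting $\pi(\{-1,1\}^n) \subseteq \{-1,1\}^k$, we obtain
\[
|H \cap \{-1,1\}^n| \le |\{-1,1\}^k| = 2^k,
\]
which is the claim.

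I foresee no real obstacle: the only nontrivial input is the classical fact that an $n \times k$ matrix of full column rank $k$ has an invertible $k \times k$ minor, which is immediate from rank considerations. A slightly different route would proceed by induction on $n$---pick a coordinate on which $H$ is not identically zero, split $\{-1,1\}^n$ according to that coordinate's value, and apply the inductive hypothesis to the resulting hyperplane slices of $H$---but this is strictly more cumbersome than the projection argument above.
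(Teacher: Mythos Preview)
Your argument is correct and is indeed the standard proof of Odlyzko's lemma: choose $k$ coordinates on which the projection from $H$ is injective (equivalently, an invertible $k\times k$ minor of a basis matrix), and observe that any $\pm 1$ vector in $H$ is then determined by its values on those $k$ coordinates.

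The paper does not actually supply a proof of this lemma; it simply quotes the statement from Odlyzko's paper \cite{O} and uses it as a black box in the proofs of Lemma~\ref{lemma:rankincrease} and Theorem~\ref{theorem:total:manyzeros}. So there is nothing to compare against, but your proof is exactly the classical one and would be perfectly suitable here.
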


\begin{proof}(of Lemma \ref{lemma:rankincrease}) Because $M_n$ has rank $k$, the  subspace spanned by its rows intersects $\{-1,1\}^n$ in a set $H$ of no more than $2^k$ vectors. Thus the probability that the subvector formed by the last $n$ components of the first row of $M_{n+1}$ does not belong to $H$ is at least $1-2^{-n+k}$. Hence, 

$$\P(\rank(M_{n+1})= k+2 |\rank(M_n)= k) \ge 1-2^{-n+k}.$$

In general, for $1\le t\le n-k$ we have

$$\P(\rank(M_{n+t}) = k+2t | \rank(M_{n+t-1})= k+2(t-1)) \ge 1-2^{-n+t+k-1}.$$
 
Because the rows (and columns) added to $M_{n+t-1}$ each step (to create $M_{n+t}$) are independent, we have

\begin{align*} 
&\qquad \P(\rank(M_{2n-k-1})=2n-k-2 |\rank(M_n)=k) \ge\\ 
&\ge \prod_{t=1}^{n-k-1} \P\big(\rank(M_{n+t}) = k+2t |\rank(M_{n+t-1})= k+2(t-1)\big)  \\
&\ge (1-2^{-n+k})(1-2^{-n+k+1})\dots (1-2^{-1})\ge 0.1.
\end{align*}

\end{proof}

Next we show that in the case of $M_n$ having rank $n-1$, it suffices to assume that $\rank(M_{n-1})\ge n-2$, thanks to the following simple observation.

\begin{lemma}\label{lemma:optional}
Assume that $M_n$ has rank $n-1$. Then there exists $1\le i \le n$ such that the removal of the $i$-th row and the $i$-column of $M_{n}$ results in a symmetric matrix $M_{n-1}$ of rank at least $n-2$.
\end{lemma}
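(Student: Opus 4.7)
The plan is to exploit the one-dimensionality of $\ker(M_n)$ together with a simple dimension count. Since $\rank(M_n)=n-1$, the kernel is spanned by a single nonzero vector $v=(v_1,\ldots,v_n)$. I would choose the index $i$ to be \emph{any} coordinate with $v_i\ne 0$, and let $N$ denote the symmetric $(n-1)\times(n-1)$ principal submatrix obtained by deleting the $i$-th row and column of $M_n$. The assertion is then that this particular choice of $i$ forces $\rank(N)\ge n-2$.

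I would argue by contradiction: suppose $\rank(N)\le n-3$, so that $\ker(N)\subset\R^{n-1}$ has dimension at least $2$. For each $w\in\ker(N)$, extend $w$ to $\tilde w\in\R^n$ by inserting a $0$ in the $i$-th coordinate. For every $k\ne i$ a direct entrywise computation gives
$$(M_n\tilde w)_k=\sum_{\ell\ne i}M_n(k,\ell)\,w_\ell=(Nw)_k=0,$$
so $M_n\tilde w$ lies in the line $\R e_i$. Consequently $M_n$ maps the $2$-dimensional subspace $\tilde V:=\{\tilde w:w\in\ker(N)\}$ into a $1$-dimensional space, and rank--nullity provides a nonzero vector $u\in\tilde V\cap\ker(M_n)$.

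To finish, I would confront the two descriptions of $u$. On the one hand, $u\in\tilde V$ forces $u_i=0$. On the other, $u\in\ker(M_n)=\R v$ gives $u=\lambda v$ for some scalar $\lambda$, and comparing $i$-th coordinates yields $\lambda v_i=0$. Since $v_i\ne 0$ by the choice of $i$, we conclude $\lambda=0$, hence $u=0$, contradicting $u\ne 0$. This establishes $\rank(N)\ge n-2$.

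The argument is essentially linear algebra and I do not anticipate a real obstacle; the only nontrivial move is the choice of $i$ via the support of a kernel vector rather than arbitrarily, which is exactly what prevents the extended vectors $\tilde w$ from hiding inside $\ker(M_n)$.
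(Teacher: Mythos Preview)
Your proof is correct. The choice of $i$ via the support of a kernel vector and the subsequent contradiction argument are airtight: the map $\tilde V\to\R e_i$ does have a nontrivial kernel when $\dim\tilde V\ge 2$, and the collision with $\ker(M_n)=\R v$ gives the contradiction exactly as you describe.

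The paper takes a shorter and more direct route. Since $\rank(M_n)=n-1$, some $n-1$ rows of $M_n$ are linearly independent; take $i$ to be the index of the remaining row. The $(n-1)\times n$ submatrix formed by the other rows then has rank $n-1$, and deleting column $i$ from it can lower the rank by at most one, giving $\rank(M_{n-1})\ge n-2$. No contradiction argument or kernel analysis is needed. It is worth observing that the two choices of $i$ coincide: by symmetry a kernel vector $v$ is also a left null vector, so $v_i\ne 0$ is equivalent to row $i$ lying in the span of the remaining rows, i.e.\ to the other $n-1$ rows being independent. Your approach trades brevity for a more structural viewpoint (tracking how $\ker(N)$ sits inside $\ker(M_n)$), which is a perfectly reasonable exchange; the paper's argument simply exploits the elementary fact that column deletion drops rank by at most one.
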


\begin{proof} (of Lemma \ref{lemma:optional})
With out loss of generality, assume that the last $n-1$ rows of $M_n$ span a subspace of dimension $n-1$. Then the matrix obtained from $M_n$ by removing the first row and the first column has rank at least $n-2$.
\end{proof}

To prove Theorem \ref{theorem:singularity}, it thus suffices to prove

\begin{theorem}\label{theorem:singularity:case1}
$$\P(\det(M_n)=0, \rank(M_{n-1})=n-1)=O(n^{-C}).$$
\end{theorem}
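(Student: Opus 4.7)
First, condition on $M_{n-1}$ (of full rank $n-1$ by assumption) and recast $\{\det(M_n)=0\}$ as a quadratic concentration event in the random $\pm 1$ vector $y=(x_2,\dots,x_n)^T$. Since $M_{n-1}$ is invertible, the cofactor matrix appearing in the expansion $\det(M_n)=x_1^2\det(M_{n-1})+\sum_{i,j}a_{ij}x_ix_j$ equals $A=\det(M_{n-1})\,M_{n-1}^{-1}$, and the vanishing of $\det(M_n)$ reduces, up to a sign depending on $x_1$, to the quadratic equation $y^T M_{n-1}^{-1} y\in\{-1,+1\}$. Thus it suffices to show that, with probability $1-O(n^{-C})$ over $M_{n-1}$, the small-ball probability
$$\rho(M_{n-1}):=\sup_{v\in\R}\,\P_y\bigl(y^T M_{n-1}^{-1} y=v \,\bigm|\, M_{n-1}\bigr)$$
is $O(n^{-C})$.

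Next I would implement the two-step scheme sketched earlier in this section. The forward step is to dyadically decompose over the level of $\rho(M_{n-1})$ and apply the inverse quadratic Littlewood-Offord theorem to be proved in Sections \ref{section:linearILO}--\ref{section:quadraticILO}. The conclusion is that whenever $\rho(M_{n-1})\ge n^{-C}$, the symmetric matrix $M_{n-1}^{-1}$ must have \emph{strong additive structure}: after a common rescaling, most of its rows lie in a generalized arithmetic progression (GAP) of rank $O_C(1)$ and volume $n^{O_C(1)}$.

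The heart of the proof is the backward counting step: show that the event ``$M_{n-1}^{-1}$ has this structure'' occurs with probability $O(n^{-C})$ over the random symmetric $M_{n-1}$. My strategy is to expose the rows of $M_{n-1}$ one at a time (each row bringing its transposed column with it) and track the corresponding rows of $M_{n-1}^{-1}$ via a symmetric Sherman--Morrison-type update. Conditioning on the first $n-2$ rows and columns, the constraint that the last row of $M_{n-1}^{-1}$ lies in a prescribed GAP becomes an essentially linear Littlewood-Offord condition on the fresh $\pm 1$ entries of the last row of $M_{n-1}$, which holds with probability at most $O(n^{-C})$; a union bound over the $n^{O_C(1)}$ candidate GAPs then closes the estimate.

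The main obstacle I expect is executing this counting step in the symmetric setting. Unlike the i.i.d.\ case of \cite{KKSz,TVsing}, each exposed row comes bundled with its corresponding column, so the update formula is a rank-two symmetric perturbation whose denominator is itself a quadratic form in the fresh randomness; that denominator must be shown, by an auxiliary Littlewood-Offord bound, to be generically nonzero and well-controlled. Combining this with the linear and bilinear inverse Littlewood-Offord results developed elsewhere in the paper should yield Theorem \ref{theorem:singularity:case1}.
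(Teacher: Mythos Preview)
Your setup and the forward step are correct and match the paper: conditioning on a full-rank $M_{n-1}$, the event $\det(M_n)=0$ is a quadratic concentration event for the cofactor matrix $A=(a_{ij})$, and if $\rho_q(A)\ge n^{-C}$ then the inverse quadratic Littlewood--Offord theorem imposes structure on $A$.

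Where your proposal diverges from the paper---and where I believe it has a genuine gap---is the backward counting step. You propose to bound directly the probability that the rows of $M_{n-1}^{-1}$ lie in a prescribed GAP, by exposing rows of $M_{n-1}$ one at a time and tracking $M_{n-1}^{-1}$ via symmetric Sherman--Morrison updates. There are two problems with this. First, the output of the inverse quadratic theorem is not simply ``rows of $A$ lie in a GAP'': it says that $a_{ij}=a_{ij}'/k^2-\text{(low-rank corrections)}$ with $a_{ij}'\in Q_i$, so the structure you must rule out is considerably more delicate than a single GAP containment. Second, and more seriously, even if you grant yourself a clean GAP constraint on the last row of $M_{n-1}^{-1}$, the block-inverse formula shows that this row equals $(-s^{-1}b^T M_{n-2}^{-1},\,s^{-1})$ with $s=d-b^T M_{n-2}^{-1}b$; the normalizing Schur complement $s$ is itself a quadratic form in the fresh randomness $b$, so the constraint is not ``essentially linear'' in $b$ in any usable sense. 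You acknowledge this obstacle but do not indicate how to overcome it, and I do not see how the linear and bilinear inverse theorems alone would resolve it. Finally, your union bound over ``$n^{O_C(1)}$ candidate GAPs'' is too optimistic: once rational commensurability is invoked, the number of candidate structures is $n^{O_{C,\ep}(n^\ep)}$, and beating this requires a much finer counting than a single Littlewood--Offord bound on one row.

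The paper avoids all of this by inserting an intermediate reduction, Lemma~\ref{lemma:normalvector:quadratic}, which you have bypassed. Using the identity $M_{n-1}A=\det(M_{n-1})I_{n-1}$ together with the row-relation matrix $R$ from the inverse quadratic theorem, the paper extracts from the structured cofactor matrix a \emph{single} nonzero vector $u$ which (i) has all but $n^\ep$ components in a small GAP, (ii) has all components rational of controlled height, and (iii) is orthogonal to $n-O(n^\ep)$ rows of $M_{n-1}$. This converts the problem into bounding $\sum_{u\in\mathcal P}\P_u$, where $\P_u$ is the probability that $M_{n-1}$ has almost all rows orthogonal to the fixed vector $u$. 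That probability factors (up to the symmetry dependence) as a product of \emph{linear} Littlewood--Offord probabilities over the rows, and the sum over $u$ is controlled by a dyadic ``concentration sequence'' argument (Theorems~\ref{theorem:total:manyzeros} and~\ref{theorem:total:manynonzeros}) that balances the number of structured $u$ against the smallness of each $\P_u$. The Sherman--Morrison machinery is never needed.
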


\begin{theorem}\label{theorem:singularity:case2}
$$\P(\det(M_n)=0, \rank(M_{n-1})=n-2)= O(n^{-C}).$$
\end{theorem}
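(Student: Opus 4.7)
The plan is to exploit the very restrictive rank-$1$ structure forced on the adjugate of $M_{n-1}$ when $\rank(M_{n-1})=n-2$, converting the singularity event for $M_n$ into a \emph{linear} Littlewood-Offord event for the first row, and then to finish by combining an inverse Littlewood-Offord step with a decoupling estimate. Concretely, when $\rank(M_{n-1})=n-2$ the cofactor matrix $(a_{ij})_{2\le i,j\le n}$ is symmetric of rank exactly $1$, so it can be written as $\alpha\Bv\Bv^T$ for some nonzero scalar $\alpha$ and a primitive integer vector $\Bv=(v_2,\ldots,v_n)$ spanning $\ker(M_{n-1})$. Since $\det(M_{n-1})=0$, the identity $\det(M_n)=x_1^2\det(M_{n-1})+\sum a_{ij}x_ix_j$ from the start of Section 2 collapses to
$$\det(M_n)\;=\;\alpha\,(\Bv\cdot\Bx')^2,\qquad \Bx':=(x_2,\ldots,x_n),$$
so the event $\{\det(M_n)=0\}$ becomes the \emph{linear} event $\{\Bv\cdot\Bx'=0\}$, and $\Bx'$ is independent of $M_{n-1}$.

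Writing $\rho(\Bv):=\P_{\Bx'}(\Bv\cdot\Bx'=0)$ and setting the threshold at $n^{-C-1}$, we decompose
$$\P(\det M_n=0,\;\rank M_{n-1}=n-2)\;=\;\E\bigl[\mathbf{1}_{\rank M_{n-1}=n-2}\,\rho(\Bv(M_{n-1}))\bigr].$$
The contribution from matrices with $\rho(\Bv)\le n^{-C-1}$ is trivially $O(n^{-C})$. To control the remainder, I will invoke the inverse Littlewood-Offord theorem for linear forms (the main result of Section \ref{section:linearILO}): any primitive integer $\Bv$ with $\rho(\Bv)>n^{-C-1}$ must be highly structured, namely, up to removing a few coordinates, $\Bv$ embeds into a generalized arithmetic progression of rank $O_C(1)$ and volume $n^{O_C(1)}$. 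Combined with the Hadamard bound $|v_i|\le n^{n/2}$ on the cofactors, this produces a candidate set $\CS$ of structured $\Bv$ of cardinality at most $n^{O_C(1)}$.

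The crux is then to show that for each fixed $\Bv\in\CS$, the matrix probability $\P(M_{n-1}\Bv=0)$ is super-polynomially small. I plan to use a bipartite decoupling: partition the index set $\{2,\ldots,n\}$ into two halves $S,T$ of nearly equal size, condition on the symmetric sub-blocks $(M_{n-1}(i,j))_{i,j\in S}$ and $(M_{n-1}(i,j))_{i,j\in T}$, and treat the remaining off-diagonal block $(M_{n-1}(i,j))_{i\in S,\,j\in T}$ (consisting of $|S|\cdot|T|$ independent Bernoullis) as the source of randomness. For each $i\in S$, the equation $(M_{n-1}\Bv)_i=0$ becomes an affine-linear constraint on the row $(M_{n-1}(i,j))_{j\in T}$, and the classical Erd\H{o}s-Littlewood-Offord inequality gives each such constraint probability $O(1/\sqrt{m_T(\Bv)})$, where $m_T(\Bv):=\#\{j\in T:v_j\ne 0\}$. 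Since the $|S|$ constraints arising from $i\in S$ involve pairwise disjoint sets of independent entries, the joint probability is at most $(O(1)/\sqrt{m_T(\Bv)})^{|S|}=n^{-\Omega(n)}$, and a union bound over $\CS$ yields $O(n^{-C})$ with room to spare.

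The principal obstacle is the possibility of \emph{sparse} $\Bv$, for which $m_T(\Bv)$ may be too small to run the above bound for a single fixed partition $(S,T)$. I plan to handle this by a preliminary case split on the support size of $\Bv$: when $\Bv$ has linearly many nonzero entries, an averaging argument over random partitions produces an $(S,T)$ with $m_T(\Bv)=\Omega(n)$, feeding directly into the decoupling bound; when $\Bv$ is sparse, its support has only $o(n)$ positions and the inverse Littlewood-Offord theorem (combined with direct enumeration of sparse structured vectors) keeps the candidate set manageable, while a separate row-by-row application of Erd\H{o}s-Littlewood-Offord on the non-conditioned entries controls $\P(M_{n-1}\Bv=0)$. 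Merging the two regimes in a single clean bound is the main technical difficulty; once this is arranged, the rest is an application of the linear inverse Littlewood-Offord theory of Section \ref{section:linearILO} together with the bipartite swapping trick standard in the symmetric random matrix literature.
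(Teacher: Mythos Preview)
Your overall strategy---exploiting the rank-$1$ structure of the adjugate when $\rank(M_{n-1})=n-2$ to reduce $\det(M_n)=0$ to the linear event $\Bv\cdot\Bx'=0$, then invoking inverse Littlewood--Offord and a union bound over structured kernel vectors---is exactly the paper's route (this is Lemma~\ref{lemma:normalvector:linear} feeding into Section~\ref{section:singularity:completion}). The gap is your cardinality claim $|\CS|\le n^{O_C(1)}$, which is false, and without it the union bound does not close.

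Even after the inverse theorem places all but $n^\ep$ coordinates of $\Bv$ in a GAP $Q$ of size $n^{O_C(1)}$, you must choose (i) the generators of $Q$, (ii) the $\sim n$ non-exceptional coordinates from $Q$, and (iii) the $n^\ep$ exceptional coordinates. Item (ii) alone already gives $|Q|^{n}=n^{O_C(n)}$ candidates, and the Hadamard bound $|v_i|\le n^{n/2}$ you invoke for (i) and (iii) is far too coarse. Against this, your decoupling bound $\P(M_{n-1}\Bv=0)\le (O(1)/\sqrt{n})^{|S|}=n^{-\Omega(n)}$ comes from the crude Erd\H{o}s inequality $O(n^{-1/2})$ per row, with an implicit constant in the exponent that is independent of $C$; for large $C$ the count dominates and the union bound diverges.

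The paper repairs this in two ways. First, a rational commensurability step (Lemma~\ref{lemma:rationalcommensurability}) uses the fact that $\Bv$ is orthogonal to $n-2$ rows with $\pm1$ entries to pin every $v_i$, including the exceptional ones and the GAP generators, into $\{p/q:|p|,|q|\le n^{O_{C,\ep}(n^\ep)}\}$, replacing the Hadamard bound. Second, and crucially, the counting and the probability are balanced through the \emph{optimal} inverse theorem (Theorem~\ref{theorem:ILO}): exposing the rows of $M_{n-1}$ one at a time, the paper tracks the running concentration probabilities $\rho_i(\Bv)$ and groups them dyadically into a ``concentration sequence'' $(m_1,\dots,m_K)$. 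On each level the GAP holding the relevant coordinates has size $O(\rho_i^{-1}/n^{\ep/2})$, so the per-coordinate count times the per-row probability is $O(n^{-\ep/2})<1$, and the product over all $n_0$ rows is $n^{-\Omega(\ep n_0)}$. Your bipartite decoupling yields only $O(n^{-1/2})$ per row regardless of how structured $\Bv$ is, so it cannot supply this balance; the essential idea you are missing is that a \emph{larger} concentration $\rho_i$ forces a \emph{smaller} GAP, and it is this trade-off---not a polynomial bound on $|\CS|$---that makes the union bound converge.
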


\vskip .1in

We will prove Theorem \ref{theorem:singularity:case1} by relying on a structural lemma stated below, which follows from our study of the inverse Littlewood-Offord problem for linear forms in Step 1.

\begin{lemma}[Structural theorem, degenerate case]\label{lemma:normalvector:linear} Let $\ep<1$ and $C$ be positive constants. Assume that $M_{n-1}$ has rank  $n-2$ and that

$$\P_x(\sum_{i,j}a_{ij}x_ix_j=0|M_{n-1})\ge n^{-C}.$$ 

Then there is a nonzero vector $u=(u_1,\dots,u_{n-1})$ with the following properties.

\begin{itemize}
\item All but $n^\ep$ elements of $u_i$ belong to a symmetric proper generalized arithmetic progression of rank $O_{C,\ep}(1)$ and size $n^{O_{C,\ep}(1)}$. 

\vskip .1in

\item $u_i \in \{p/q: p,q\in \Z, |p|,|q|=n^{O_{C,\ep}(n^\ep)}\}$ for all $i$.

\vskip .1in

\item $u$ is orthogonal to $n-O_{C,\ep}(n^\ep)$ rows of $M_{n-1}$.
\end{itemize}

\end{lemma}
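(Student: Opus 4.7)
The strategy is to exploit the symmetry and rank of $M_{n-1}$ to collapse the quadratic form to a squared linear form, and then apply the linear inverse Littlewood-Offord theorem from Section~\ref{section:linearILO}.

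First, since $M_{n-1}$ is symmetric with $\rank(M_{n-1})=n-2$, its kernel is one dimensional; pick a nonzero $u_0\in\ker(M_{n-1})\subset\Q^{n-1}$. The cofactor matrix $(a_{ij})_{2\le i,j\le n}$ equals $\mathrm{adj}(M_{n-1})$, which is symmetric and of rank exactly one: $M_{n-1}\cdot\mathrm{adj}(M_{n-1})=\det(M_{n-1})I=0$ forces its columns into $\ker(M_{n-1})$, and $\mathrm{adj}(M_{n-1})\neq 0$ because some $(n-2)\times(n-2)$ minor of $M_{n-1}$ is nonzero. Such a matrix must take the form $\beta u_0u_0^T$ for some nonzero scalar $\beta$, so
\[
\sum_{2\le i,j\le n}a_{ij}x_ix_j=\beta\,(u_0\cdot x)^2,
\]
and the hypothesis becomes $\P_x(u_0\cdot x=0)\ge n^{-C}$.

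Next I would apply the linear inverse Littlewood-Offord theorem from Section~\ref{section:linearILO} to $u_0$. After rescaling to $u:=\lambda u_0\in\Z^{n-1}$ with coprime entries (possible because $u_0\in\Q^{n-1}$), the theorem produces a symmetric proper integer GAP $Q$ of rank $O_{C,\ep}(1)$ and cardinality $n^{O_{C,\ep}(1)}$ such that all but at most $n^\ep$ of the $u_i$ lie in $Q$; this is Property~1. Property~3 is immediate, as $u$ lies in $\ker(M_{n-1})$ and is therefore orthogonal to every row of $M_{n-1}$.

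For Property~2, set $I:=\{i:u_i\in Q\}$ and $k:=|I^c|\le n^\ep$. Entries $u_i$ with $i\in I$ are integers in $Q$; I would choose the normalization so that the generators of $Q$ have magnitude $n^{O(1)}$, making $|u_i|\le n^{O(1)}$ for $i\in I$ (which is well within the target $n^{O(n^\ep)}$). To bound the remaining entries I use Cramer's rule on a small subsystem. Writing $u=(u_{I^c},u_I)$ in block form, the relation $M_{n-1}u=0$ reads $M'u_{I^c}=-M''u_I$, where $M'$ and $M''$ are the submatrices of $M_{n-1}$ on the columns $I^c$ and $I$ respectively. Provided $u_I\neq 0$, the one-dimensionality of $\ker(M_{n-1})$ forces $\rank(M')=k$ (any nontrivial relation among the columns of $M'$ would extend to a kernel vector of $M_{n-1}$ supported on $I^c$, contradicting that the kernel is spanned by $u$); so one can select $k$ rows whose restriction $B$ to the columns $I^c$ is invertible, and Cramer's rule expresses $u_i=\det(B_i)/\det(B)$ for $i\in I^c$ as a ratio of $k\times k$ determinants with entries either $\pm 1$ or quantities of the form $-\sum_{j\in I}M_{n-1}(r,j)\,u_j$ of magnitude at most $n\cdot n^{O(1)}$. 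Hadamard's inequality then yields $|\det B|,|\det B_i|\le n^{O(n^\ep)}$, as required. The degenerate case $u_I=0$ is easier, since $u$ is then supported on $I^c$ of size at most $n^\ep$ and a similar Cramer computation on a $k\times k$ $\pm 1$-subsystem bounds the entries by $k^{k/2}\le n^{O(n^\ep)}$.

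The main obstacle lies in the normalization step: ensuring that the GAP $Q$ extracted from the inverse Littlewood-Offord machinery can genuinely be taken with integer generators of polynomially bounded magnitude (and not merely polynomially bounded cardinality), so that $|u_i|\le n^{O(1)}$ for $i\in I$. Since inverse Littlewood-Offord is scale invariant, $Q$ is a priori determined only up to an overall rescaling, and the scalar $\lambda$ must be coordinated with the output of the inverse theorem. The rationality of $u_0$ — it is the kernel vector of a $\pm 1$ matrix — makes this normalization achievable, after which the Cramer's rule computation and the degenerate case analysis complete the argument.
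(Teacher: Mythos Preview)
Your reduction to a linear form is exactly the paper's: the adjugate of the rank-$(n-2)$ symmetric matrix $M_{n-1}$ is rank one and symmetric, hence $a_{ij}=a_ia_j$, and the hypothesis becomes $\P_x(\sum a_ix_i=0)\ge n^{-C}$. Applying the linear inverse Littlewood--Offord theorem then gives Property~1, and Property~3 is automatic since $u\in\ker(M_{n-1})$. So far this is identical to Section~\ref{section:normalvector:linear}.

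The gap is in your treatment of Property~2. The step ``choose the normalization so that the generators of $Q$ have magnitude $n^{O(1)}$, making $|u_i|\le n^{O(1)}$ for $i\in I$'' cannot be carried out from the GAP structure and coprimality alone. A concrete obstruction: the vector $u=(1,N,2N,\dots,(n-2)N)$ has coprime integer entries and all but one entry lies in the GAP $\{mN:|m|\le n\}$ of size $O(n)$, yet $\max_i|u_i|$ is unbounded in $N$. Since the inverse theorem is scale-invariant, it only controls the \emph{cardinality} of $Q$, never the magnitude of its generators; no rescaling of $u$ fixes this without further input. Consequently your Cramer step for $u_{I^c}$, which feeds in $\max_{j\in I}|u_j|$ on the right-hand side, does not yield the required $n^{O(n^\ep)}$ bound.

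What actually pins down the scale is the orthogonality to the $\pm 1$ rows, and this has to be used \emph{jointly} with the GAP structure rather than after it. The paper does this via Lemma~\ref{lemma:rationalcommensurability}: write $v^T=M_v\,(g_1,\dots,g_r,v_{\text{exceptional}})^T$ using the GAP coefficients, then hit with the $(n-2)$ orthogonality rows $M_w$ to obtain a system $(M_wM_v)\cdot(\tfrac{1}{v_{i_0}}(g,\,v_{\text{exc}}))^T=(0,\dots,0,1)^T$ in only $r+n^\ep$ unknowns with integer entries of size $n^{O(1)}$. Cramer's rule on an $(r+n^\ep)\times(r+n^\ep)$ full-rank minor then gives $g_l/v_{i_0},\,v_j/v_{i_0}\in\{p/q:|p|,|q|\le n^{O(n^\ep)}\}$. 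Your instinct to use Cramer's rule is right, but it must be applied to this combined $(r+n^\ep)$-dimensional system, not sequentially to the exceptional coordinates alone.
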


We refer the reader to Section \ref{section:linearILO} for a definition of generalized arithmetic progression. Theorem \ref{theorem:singularity:case2} follows from a similar structural lemma, which can be deduced from our study of the inverse Littlewood-Offord problem for quadratic forms in Step 1.

\begin{lemma}[Structural theorem, non-degenerate case]\label{lemma:normalvector:quadratic} Let $\ep<1$ and $C$ be positive constants.  Assume that $M_{n-1}$ has rank $n-1$ and that

$$\P_x(\sum_{i,j}a_{ij}x_ix_j=0|M_{n-1})\ge n^{-C}.$$ 

Then there exists a nonzero vector $u=(u_1,\dots,u_{n-1})$ with the following properties.

\begin{itemize}
\item All but $n^\ep$ elements of $u_i$ belong to a proper symmetric generalized arithmetic progression of rank $O_{C,\ep}(1)$ and size $n^{O_{C,\ep}(1)}$. 

\vskip .1in

\item $u_i \in \{p/q: p,q\in \Z, |p|,|q|=n^{O_{C,\ep}(n^\ep)}\}$ for all $i$.

\vskip .1in

\item $u$ is orthogonal to $n-O_{C,\ep}(n^\ep)$ rows of $M_{n-1}$.
\end{itemize}

\end{lemma}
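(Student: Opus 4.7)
The plan is to apply the quadratic inverse Littlewood--Offord theorem (to be established in Section \ref{section:quadraticILO}) to the quadratic form $Q(x) = \sum_{2 \le i,j \le n} a_{ij} x_i x_j$, and then exploit the adjugate identity $A \cdot M_{n-1} = \det(M_{n-1}) I_{n-1}$, valid because $M_{n-1}$ is invertible, to obtain the orthogonality and rationality properties. Here $A = (a_{ij})$ is the cofactor matrix of $M_{n-1}$, which is symmetric since $M_{n-1}$ is.

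First I would decouple the quadratic form in the style of Costello--Tao--Vu. Partitioning $\{2, \ldots, n\}$ into two halves $I$ and $J$ and writing $x = (y, z)$ accordingly, the hypothesis $\P_x(Q = 0) \ge n^{-C}$ translates, via a standard Cauchy--Schwarz / decoupling argument, into a concentration lower bound $\ge n^{-O(C)}$ on the bilinear form $B(y,z) = \sum_{i \in I, j \in J} a_{ij} y_i z_j$ after freezing generic auxiliary variables. Invoking the inverse Littlewood--Offord result for bilinear forms from Section \ref{section:bilinearILO}, this forces a positive fraction of rows of the $|I| \times |J|$ submatrix of $A$ to have all but $O(n^\ep)$ of their entries in a common symmetric proper GAP $P$ of rank $O_{C,\ep}(1)$ and size $n^{O_{C,\ep}(1)}$. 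Pick any such row index $i_0$, and let $u \in \R^{n-1}$ be (a suitable rescaling of) the full $i_0$-th row of $A$; the entries indexed by $I$ can be brought under the same GAP either by running the decoupling with the roles of $I$ and $J$ swapped, or by absorbing them into the exceptional set.

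The orthogonality property is then essentially automatic: the adjugate identity gives $\sum_{j} a_{i_0 j} (M_{n-1})_{jk} = \det(M_{n-1}) \delta_{i_0 k}$, so $u$ is orthogonal to every column of $M_{n-1}$ except the $i_0$-th, which by symmetry of $M_{n-1}$ is the same as being orthogonal to all but one \emph{row} --- far more than the required $n - O(n^\ep)$. For the rational representation, the $n - O(n^\ep)$ coordinates of $u$ belonging to $P$ are bounded by $n^{O_{C,\ep}(1)}$ after scaling and so admit the desired form $p/q$ trivially. For the $O(n^\ep)$ exceptional coordinates, I would use that $u$ is determined up to scalar by its orthogonality with $n - O(n^\ep)$ rows of $M_{n-1}$: writing these linear constraints out and substituting the known GAP values for the "structured" coordinates, each exceptional coordinate is expressed by Cramer's rule as a ratio of $O(n^\ep) \times O(n^\ep)$ minors of an augmented $\pm 1$ matrix. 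Hadamard's inequality bounds each such minor by $(n^\ep)^{O(n^\ep)} = n^{O_{C,\ep}(n^\ep)}$, yielding $|p|, |q| \le n^{O_{C,\ep}(n^\ep)}$ as required; both numerator and denominator then belong to $\Z$ after clearing the common factor $\det(M_{n-1})$.

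The principal obstacle I anticipate is the decoupling plus bilinear ILO step, where one must carefully track quantitative losses: the small-ball probability drops from $n^{-C}$ to a polynomially smaller quantity at each decoupling, and the bilinear inverse theorem must produce a \emph{single} row of $A$ with the stated structure rather than merely aggregate information about the coefficients. A secondary difficulty is extending the GAP containment uniformly from the $J$-coordinates of row $i_0$ to the $I$-coordinates, which requires the two halves of $\{2, \ldots, n\}$ to play symmetric roles and may slightly enlarge the exceptional set, but should remain within the allowed $O_{C,\ep}(n^\ep)$ budget.
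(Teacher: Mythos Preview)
There is a genuine gap in your reading of the inverse Littlewood--Offord theorems. You write that the bilinear/quadratic ILO ``forces a positive fraction of rows of the $|I|\times|J|$ submatrix of $A$ to have all but $O(n^\ep)$ of their entries in a common symmetric proper GAP $P$''. This is not what Theorems~\ref{theorem:ILO:bilinear:differentGAP} and~\ref{theorem:ILO:quadratic:differentGAP} say. Their conclusion is a decomposition
\[
a_{ij} \;=\; \frac{a_{ij}'}{kl} \;-\; \frac{1}{k}\sum_{i_0\in I_0} k_{ii_0}\,a_{i_0 j} \;-\; \frac{1}{l}\sum_{j_0\in J_0} l_{j_0 j}\,a_{ij_0},
\]
where only the $a_{ij}'$ lie in a small GAP. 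The correction term $\frac{1}{k}\sum_{i_0} k_{ii_0} a_{i_0 j}$ is a \emph{fixed} linear combination of the special rows $\row_{i_0}(A)$, whose entries can be completely arbitrary real numbers (cf.\ Examples~\ref{example:bilinear:2} and~\ref{example:quadratic:2}). Consequently a raw row of $A$ need not have \emph{any} of its entries in a polynomially small GAP, and your candidate $u=\row_{i_0}(A)$ fails the first bullet of the lemma.

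The paper repairs this by passing to $A'=RA$, where $R$ (Definition~\ref{definition:quadratic:R}) subtracts off the algebraic correction so that rows of $A'$ genuinely have the GAP property; one then takes a column $v$ of $A'$, uses $M_{n-1}R^{-1}\cdot A'=\det(M_{n-1})I_{n-1}$ in place of the raw adjugate identity, and finally undoes the transformation to obtain $u$. This last step is what costs you orthogonality to $O(n^\ep)$ rows rather than just one, and is also why Lemma~\ref{lemma:rationalcommensurability} is invoked for the rationality statement (your Cramer's-rule sketch is morally similar but needs to treat the unknown GAP generators as additional variables, which is what that lemma does). A secondary issue with your outline is that a single fixed partition $I\cup J$ only controls the off-diagonal block of $A$; ``absorbing the $I$-coordinates into the exceptional set'' discards $n/2$ entries, not $O(n^\ep)$, and swapping $I,J$ gives no reason the same row index $i_0$ survives. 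The paper handles this inside the proof of the quadratic ILO by decoupling over a \emph{random} partition $U$ and then pigeonholing over $U$ (Section~\ref{section:proof:quadraticILO}).
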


The rest of the paper is organized as follows. In Sections \ref{section:linearILO}-\ref{section:quadraticILO}, we discuss the inverse Littlewood-Offord problem in details. As applications, we prove Lemma \ref{lemma:normalvector:linear} and Lemma \ref{lemma:normalvector:quadratic} in Section \ref{section:normalvector:linear} and Section \ref{section:normalvector:quadratic} respectively. We conclude by proving Theorem \ref{theorem:singularity:case1} and Theorem \ref{theorem:singularity:case2} in Section \ref{section:singularity:completion}.

\section{The inverse Littlewood-Offord problem for linear forms}\label{section:linearILO}

Let $x_i, i=1,\dots, n$ be iid Bernoulli random variables, taking values $\pm 1$ with probability $\frac{1}{2} $. Given a multiset $A$ of $n$ real number  $a_1, \dots, a_n$, we define the random walk $S$ with steps in $A$ to be the random variable $S:=\sum_{i=1}^n{a_ix_i}$. The \emph{concentration probability} is defined to be

$$\rho(A) := \sup_{a} \P(S=a). $$

Motivated by their study of random polynomials, in the 1940s Littlewood and Offord \cite{LO} raised the question of bounding $\rho(A)$. (We call this the  {\it forward} Littlewood-Offord problem, in contrast with the {\it inverse} Littlewood-Offord problem discussed later.) They showed that if the $a_i$ are nonzero then $\rho(A)=O(n^{-1/2}\log n)$. Shortly after the Littlewood-Offord paper, Erd\H{o}s \cite{E} gave a beautiful combinatorial proof of the refinement

\begin{equation} \label{eqn:Erdos} \rho(A)\le \frac{ \binom{n}{n/2}}{2^n } =O(n^{-1/2}). \end{equation}

\vskip .1in

The results of Littlewood-Offord and Erd\H{o}s are classics  in combinatorics and have  generated an impressive wave of research, particularly from the early 1960s to the late 1980s.

One direction of research was to generalize Erd\H os' result  to
other groups. For example, in  1966 and  1970 \cite{Kle}, Kleitman extended Erd\H{o}s' result to
complex numbers and normed vectors, respectively. Several results in this direction can be found in \cite{Kat}.

Another direction was  motivated by the observation that \eqref{eqn:Erdos} can be improved significantly by making additional assumptions about $V$. The first such result was discovered by Erd\H{o}s and Moser \cite{EM},
who showed that if $a_i$ are distinct, then $\rho(A) = O(n^{-3/2} \log n)$. This bound was then sharpened to $\rho(A)=O(n^{-3/2})$ by S\'ark\H{o}zy and Szemer\'edi \cite{SSz}. Another famous result regarding this result of Erd\H os and Moser is that of Stanley \cite{Stan}, who shows that if $a_i$ are distinct then $\rho(A)\le \rho(A_0)$, where $A_0 :=\{- \lfloor n/2 \rfloor , \dots, \lfloor n/2 \rfloor \}$.

In \cite{H} (see also in \cite{TVbook}),  Hal\'asz
proved very general theorems that imply the S\'ark\H{o}zy-Szemer\'edi theorem and many others.
One of his results can be formulated as follows.

\begin{theorem} \label{theorem:Hal}
Let $l$ be a fixed integer and $R_l$ be the number of solutions of the equation
$a_{i_1}+\dots+a_{i_l}=a_{j_1}+\dots+a_{j_l}$. Then

$$\rho(A)= O(n^{-2l-\frac{1}{2}}R_l).$$
\end{theorem}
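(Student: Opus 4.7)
\; The proof is Fourier-analytic. I would first reduce to the case of integer $a_j$ by a rational approximation argument (replacing each $a_j$ by $\lfloor N a_j\rfloor$ for a large integer $N$ perturbs both $\rho(A)$ and $R_l$ by a negligible amount). In the integer case $S:=\sum_j a_jx_j$ is integer valued, so Fourier inversion on $\Z$ gives
\[
\rho(A) \le \int_{-1/2}^{1/2} |\phi(t)|\,dt, \qquad \phi(t):=\prod_{j=1}^n\cos(2\pi a_jt),
\]
while expanding $|\sum_j e^{2\pi i a_jt}|^{2l}$ and invoking orthogonality of characters produces the Fourier identity
\[
R_l = \int_0^1 |f(t)|^{2l}\,dt, \qquad f(t):=\sum_{j=1}^n e^{2\pi i a_j t},
\]
tying $R_l$ directly to the harmonic analysis of $\{a_j\}$.

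The heart of the argument is a comparison between the superlevel sets of $|\phi|$ and of $|f|$. Two pointwise estimates drive this. First, the bound $|\cos(2\pi y)|\le\exp(-c\|y\|^2)$ (with $\|\cdot\|$ the distance to the nearest integer) gives $|\phi(t)|\le\exp(-c\sum_j\|a_jt\|^2)$, so that $|\phi(t)|\ge s$ forces $\sum_j\|a_jt\|^2\ll\log(1/s)$. Second, the identity $|f(t)|^2=n^2-2\sum_{j\ne k}\sin^2(\pi t(a_j-a_k))$ combined with $\|t(a_j-a_k)\|^2\le 2\|a_jt\|^2+2\|a_kt\|^2$ yields $|f(t)|^2\ge n^2-O\bigl(n\sum_j\|a_jt\|^2\bigr)$. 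Chaining these implications with Markov applied to the identity for $R_l$ produces the superlevel bound
\[
\bigl|\{t\in[0,1]:|\phi(t)|\ge s\}\bigr| \le \frac{R_l}{\bigl(n-O(\log(1/s))\bigr)^{2l}}.
\]
Inserting this into the layer-cake decomposition $\int_0^1|\phi|\,dt=\int_0^\infty|\{|\phi|\ge s\}|\,ds$, with the tail $s\le e^{-cn}$ handled trivially, yields a first estimate of the form $\rho(A)\ll R_l/n^{2l}$.

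\textbf{The main obstacle.}\; This first estimate falls short of the target $R_l/n^{2l+1/2}$ by exactly a factor of $n^{1/2}$. The sharp case $a_j=j$ (where $R_l\asymp n^{2l-1}$ and $\rho(A)\asymp n^{-3/2}$) confirms that this missing $n^{-1/2}$ is genuine Gaussian/CLT content: $|\phi|$ concentrates on a neighbourhood of $t=0$ of width $\sim(\sum_j a_j^2)^{-1/2}$, and the crude Markov bound on the superlevel set completely washes out this localization. To recover the factor I would refine the layer-cake step, either by weighting against a Fej\'er-type smoothing kernel that localizes to the resonance neighbourhood, or by running the comparison not with $|\phi|$ directly but with $|\phi|^2$ (whose $L^1$-norm equals $\Pr(S=S')$ by Plancherel) and then interpolating between this trivial $L^2$ bound and the $R_l$-driven $L^{2l}$ bound. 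Choosing the interpolation exponent so as to squeeze out the CLT factor $n^{-1/2}$ is where I expect all of the real work to lie.
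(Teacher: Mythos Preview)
The paper does not prove Theorem~\ref{theorem:Hal}; it is quoted as background with a reference to Hal\'asz's original paper~\cite{H} (and to~\cite{TVbook} for an exposition), so there is no in-paper argument to compare against. What I can assess is whether your sketch would reproduce Hal\'asz's result.

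Your Fourier-analytic framework is exactly Hal\'asz's, and you have correctly isolated the one nontrivial step: extracting the extra factor $n^{-1/2}$ beyond the bound $R_l/n^{2l}$ that falls out of the level-set/Markov comparison. However, neither of your proposed fixes closes this gap. The Fej\'er-kernel idea does nothing here: for integer-valued $S$ the unsmoothed inversion $\rho(A)\le\int_0^1|\phi|$ is already exact, and Esseen-type smoothing cannot improve the exponent. The $L^2$--$L^{2l}$ interpolation idea runs into circularity: $\int_0^1|\phi|^2=\sum_a\P(S=a)^2\le\rho(A)$, so the $L^2$ information is bounded \emph{by} the quantity you are estimating, and any H\"older or Cauchy--Schwarz step against it returns at best $\rho(A)\le C\,|\{t:\sum_j\|a_jt\|^2\le O(1)\}|$, which is the $R_l/n^{2l}$ bound again. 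You can see the loss concretely in your own example $a_j=j$: your Markov bound gives $|\{|f|\ge n/2\}|\le 2^{2l}R_l/n^{2l}\asymp n^{-1}$, whereas the true measure of this set (a Dirichlet-kernel computation) is $\asymp n^{-1}$ as well --- so the Markov step is \emph{sharp} there, and the missing $n^{-1/2}$ must come from the integral of $|\phi|$ over that set, not from a better measure bound.

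What is actually needed is an additional structural input on the sublevel sets $T_m=\{t:\sum_j\|a_jt\|^2\le m\}$ beyond the single Markov inequality: one exploits that these sets are approximately closed under differences and bounded dilations, which converts the local Gaussian profile of $|\phi|$ near each point of $T_{O(1)}$ into a global $n^{-1/2}$ saving in the layer-cake integral. Your sketch does not yet contain this mechanism; for the details see~\cite{H} or the treatment in~\cite[Chapter~7]{TVbook}.
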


We remark that the Erd\H{o}s-Littlewood-Offord inequality \eqref{eqn:Erdos} and Theorem \ref{theorem:Hal} of Hal\'asz can be extended to the continuous setting. This type of concentration has been vastly investigated in the literature, we refer the reader to \cite{Ess,H,Kol,Rog} for further reading. We mention here an asymptotic result of Kanter \cite{Kan}, which generalizes \eqref{eqn:Erdos} and is closely related to our discussion 

\begin{theorem}\label{theorem:Kanter} Let $\Phi$ be a symmetric convex measurable set in a vector space $V$, and $a_i\in V$. Assume that there are $\Theta(n)$ indices $i$ such that $a_i\notin \Phi$. Then we have

$$\sup_{a} \P(S\in a+\Phi) = O(n^{-1/2}).$$
  
\end{theorem}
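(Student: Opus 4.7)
The plan is to extend Erd\H{o}s's antichain proof of \eqref{eqn:Erdos} to the convex symmetric setting. First I would reduce to the case where every $a_i$ lies outside $\Phi$: conditioning on those coordinates $x_j$ with $a_j \in \Phi$, it suffices to show that for any $m = \Theta(n)$ vectors $b_1,\dots,b_m \notin \Phi$ and any $c \in V$,
$$\#\bigl\{\epsilon \in \{\pm 1\}^m : \textstyle\sum_i \epsilon_i b_i \in c + \Phi\bigr\} \le \binom{m}{\lfloor m/2 \rfloor},$$
which via Stirling yields the desired $O(m^{-1/2}) = O(n^{-1/2})$ probability bound.

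The geometric input is a one-line convexity lemma: if $v \notin \Phi$, then for any $u \in V$ and any translate $c + \Phi$, at most one of $u + v$ and $u - v$ lies in $c + \Phi$. Indeed, if both did, then since $\Phi$ is convex and symmetric,
$$v = \tfrac{1}{2}\bigl((u+v-c) - (u-v-c)\bigr) \in \tfrac{1}{2}(\Phi - \Phi) = \Phi,$$
a contradiction. Applying this with $v = b_i$, the set $E := \{\epsilon : \sum_i \epsilon_i b_i \in c + \Phi\}$ contains no two elements differing in a single coordinate. This alone only yields $|E| \le 2^{m-1}$, which is far too weak.

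To upgrade to the Sperner-type bound, identify $\{\pm 1\}^m$ with the Boolean lattice $2^{[m]}$ and invoke the de~Bruijn--Tengbergen--Kruyswijk symmetric chain decomposition, which partitions $2^{[m]}$ into exactly $\binom{m}{\lfloor m/2 \rfloor}$ saturated chains. The target is to show that $E$ meets each chain in at most one element; summed over chains, this gives the claimed count. The main obstacle is precisely here: the convexity lemma rules out only \emph{consecutive} elements on a chain (those differing in one coordinate), whereas two non-consecutive elements on the same chain differ by $2\sum_{j\in S} b_j$, a quantity that could accidentally fall in $2\Phi$ even though every individual $b_j$ does not. To bypass this, I would pursue Kanter's coupling route: establish a peakedness comparison stating that, among symmetric convex sets, the distribution of $\sum_i \epsilon_i b_i$ is dominated in the ``$\Phi$-peakedness'' preorder by the distribution of a scalar $\pm 1$ random walk of the same length. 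The coupling is built coordinate by coordinate via a layer-cake decomposition of the indicator of $c + \Phi$, with each step using only the fact that $b_i \notin \Phi$ (the convexity lemma above provides exactly the one-coordinate peakedness gain needed). Once the peakedness domination is in hand, the theorem reduces immediately to Erd\H{o}s's scalar bound \eqref{eqn:Erdos}.
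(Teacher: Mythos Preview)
The paper does not supply a proof of this statement: Theorem~\ref{theorem:Kanter} is quoted from Kanter \cite{Kan} as part of the historical survey in Section~\ref{section:linearILO} and plays no role in the proofs of the main results. So there is no proof in the paper to compare your attempt against.

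As for the proposal itself: the conditioning reduction and the convexity lemma are both correct, and you are right that the standard symmetric chain decomposition does not finish the job --- your obstruction (that $b_{i_0}+b_{i_1}$ may lie in $\Phi$ even when neither summand does) is genuine. The gap is in your last paragraph. You invoke ``Kanter's coupling route'' and a ``layer-cake decomposition of the indicator of $c+\Phi$'' to set up a coordinate-by-coordinate peakedness comparison, but none of this is carried out, and the one-coordinate convexity lemma alone does not chain: the naive induction on $m$ (condition on $\epsilon_m$, apply the lemma to the two translates $c\pm b_m+\Phi$) only reproduces the trivial bound $\tfrac12$, with no $m^{-1/2}$ gain. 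What is missing is a real additional ingredient. In Kleitman's version of the argument one takes, at the $m$-th inductive step, a separating functional $\ell$ for $b_m$ (Hahn--Banach: $|\ell|\le 1$ on $\Phi$, $\ell(b_m)>1$), uses it to order each existing class by $\ell(S_\epsilon)$, and then grafts the new coordinate onto the \emph{minimal} element of each class; the functional is exactly what certifies that the enlarged class still has pairwise differences outside $2\Phi$. In Kanter's version the corresponding non-trivial step is that peakedness with respect to symmetric convex sets is preserved under convolution with an independent symmetric summand. Your sketch names the destination but supplies neither of these bridges.
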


Let us now turn to the main goal of this section. 

Motivated by inverse theorems from additive combinatorics (see \cite[Chapter 5]{TVbook}) and a variant for random sums in \cite[Theorem 5.2]{TVsing}, Tao and Vu \cite{TVinverse} brought a different view to the problem. Instead of trying to improve the bound further by imposing new assumptions as done in the forward problems, they tried  to provide the complete picture by finding the underlying reason as to why the concentration probability is large (say, polynomial in $n$). 

\vskip .1in

Note that the (multi)-set  $A$ has $2^n$ subsums, and $\rho({A})
\ge n^{-C}$ means that at least $\frac{2^n}{n^C}$ among these take the same
value. This observation suggests that  the set should have a very strong additive structure. To determine this structure, let us introduce an important concept in additive combinatorics, \emph{generalized arithmetic progressions} (GAPs). 

A set $Q$ is a \emph{GAP of rank $r$} if it can be expressed as in the form
$$Q= \{g_0+ m_1g_1 + \dots +m_r g_r| N_i \le m_i \le N_i' \hbox{ for all } 1 \leq i \leq r\}$$ for some $g_0,\ldots,g_r,N_1,\ldots,N_r,N'_1,\ldots,N'_r$.

\vskip .1in

It is convenient to think of $Q$ as the image of an integer box $B:= \{(m_1, \dots, m_r) \in \Z^r| M_i \le m_i \le M_i' \} $ under the linear map
$$\Phi: (m_1,\dots, m_r) \mapsto g_0+ m_1g_1 + \dots + m_r g_r. $$
The numbers $g_i$ are the \emph{generators } of $P$, the numbers $N_i',N_i$ are the \emph{dimensions} of $P$, and $\Vol(Q) := |B|$ is the \emph{volume} of $B$. We say that $Q$ is \emph{proper} if this map is one to one, or equivalently if $|Q| = \Vol(Q)$.  For non-proper GAPs, we of course have $|Q| < \Vol(Q)$.
If $-N_i=N_i'$ for all $i\ge 1$ and $g_0=0$, we say that $Q$ is {\it symmetric}.

We next consider an example of $A$ where $\rho(A)$ is large. For a positive integer $l$ we denote the set $ \{a_1+ \dots + a_l| a_i \in A\}$ by $lA$.

\vskip .2in

\begin{example}[Structure implies large concentration probability] \label{example:linear:GAP} Let $Q$ be a proper symmetric GAP of rank $r$ and volume $N$. Let $a_1, \dots, a_n$ be (not necessarily distinct) elements of
$P$. The random variable $S =\sum_{i=1}^n a_ix_i$ takes values in
the GAP $nP$. Because $|nP| \le \Vol (nB) = n^r N$, the pigeonhole
principle implies that $\rho(V)   \ge \Omega (\frac{1}{n^r N})$. In
fact, by using the second moment method, one can
improve the bound to $\Omega (\frac{1}{n^{r/2} N})$. If we set
$N=n^{C-r/2}$ for some constant $C\ge r/2$, then

\begin{equation} \label{bound2} \rho(V)   = \Omega (\frac{1}{n^{C}}).
\end{equation}
\end{example}

The example above shows that, if the elements of $A$
belong to a symmetric proper GAP with a small rank and small cardinality, then
$\rho(V)$ is large. A few years ago, Tao and Vu  \cite{TVstrong,TVinverse}
proved several versions showing that this is essentially the only reason. We present here an optimal version due to Vu and the current author.

\begin{theorem}[Optimal inverse Littlewood-Offord theorem for linear forms] \cite[Theorem 2.5]{NgV} \label{theorem:ILO} Let $\ep<1$ and $C$ be positive constants. Assume that

$$\rho (A)  \ge  n^{-C}. $$

Then, for any $n^\ep \le n' \le n $, there exists a proper symmetric GAP $Q$ of rank $r= O_{C, \eps} (1)$ that contains all but at most $n'$ elements of $A$ (counting multiplicity), where

 $$|Q| = O_{C, \eps} ( \rho (A)^{-1} {n'}^{- \frac{r}{2}}). $$
\end{theorem}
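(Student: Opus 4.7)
The plan is to combine Fourier analysis with inverse additive combinatorics. The starting point is the standard Esseen-type bound
\[
\rho(A) \;\le\; 2 \int_{-1/2}^{1/2} \prod_{i=1}^n \bigl|\cos(\pi a_i \xi)\bigr|\, d\xi \;\le\; 2 \int_{-1/2}^{1/2} \exp\!\Bigl(-2 \sum_{i=1}^n \|a_i\xi\|_{\R/\Z}^2\Bigr) d\xi,
\]
coming from Fourier inversion together with the elementary inequality $|\cos(\pi t)|\le \exp(-2\|t\|_{\R/\Z}^2)$. A dyadic pigeonhole in the level sets of $f(\xi):=\sum_i \|a_i\xi\|_{\R/\Z}^2$ yields a threshold $T = O_C(\log n)$ and a Bohr-type set
\[
B_T \;:=\; \bigl\{\xi \in \R/\Z : f(\xi)\le T\bigr\}
\]
of Lebesgue measure $|B_T| \gg \rho(A)$, up to polylogarithmic losses.

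The crucial structural observation is that $B_T$ is an approximate group: if $\xi,\eta\in B_T$ then $f(\xi\pm\eta)\le 2T$ by the triangle inequality on $\R/\Z$, so $B_T - B_T \subseteq B_{2T}$ and $|B_{2T}| = O(|B_T|)$. A continuous Freiman-type theorem then embeds $B_T$ inside a proper symmetric GAP $Q^*\subset \R/\Z$ of rank $r = O_{C,\ep}(1)$ whose total length is comparable to $|B_T|$. Dualizing, the fact that $\|a_i\xi\|_{\R/\Z}$ is small for all $\xi$ in a large subset of $Q^*$ forces all but a few of the $a_i$ to lie in the dual GAP $Q$, whose generators are essentially $|B_T|^{-1}$ times the reciprocals of the dimensions of $Q^*$. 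This already produces a proper symmetric GAP of bounded rank containing almost all of the $a_i$.

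The quantitative core of the theorem is the sharp size bound $|Q| = O_{C,\ep}(\rho(A)^{-1}(n')^{-r/2})$. The $(n')^{-r/2}$ improvement over the naive $\rho(A)^{-1}$ bound is obtained by running the Fourier-plus-structure argument not only on the full multiset $A$ but on every sub-multiset $A'\subseteq A$ of size $n-n'$: a second-moment argument of the flavor of Example \ref{example:linear:GAP} forces the resulting random walk to spread over $\Omega(\sqrt{n'})$ values along each of the $r$ directions of $Q$, yielding the extra $(n')^{-r/2}$ factor in the size estimate. The constant-rank assertion $r = O_{C,\ep}(1)$, rather than a $\log(1/\rho)$-type dependence, is secured by iteratively absorbing short directions of $Q$ (those capturing fewer than $n^{\ep}/r$ of the $a_i$) into the $n'$-sized exceptional set.

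The main obstacle, in my view, is achieving both the optimal constant rank and the sharp $(n')^{-r/2}$ size simultaneously. Off-the-shelf Freiman-Ruzsa produces a proper GAP, but with rank and doubling loss that scale polylogarithmically in $1/\rho(A)$; pushing down to rank $O_{C,\ep}(1)$ while preserving the size bound requires the refined Tao-Vu inverse machinery, combined crucially with the flexibility to discard up to $n' \ge n^{\ep}$ elements, which is what allows bad directions to be absorbed without destroying the additive structure capturing the remaining $a_i$.
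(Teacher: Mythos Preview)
The paper does not contain a proof of this theorem. Theorem~\ref{theorem:ILO} is stated with the citation \cite[Theorem 2.5]{NgV} and is used throughout as a black box (see Sections~\ref{section:proof:bilinearILO}--\ref{section:normalvector:quadratic}); the present paper neither proves nor sketches it. So there is no ``paper's own proof'' to compare your proposal against.

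As a sketch of the Nguyen--Vu argument itself, your outline is in the right spirit---Esseen's inequality, level-set pigeonholing, approximate-group structure of the sublevel set, Freiman-type structure, and a duality step are indeed the main ingredients---but several points are too loose to count as a proof. First, the set $B_T\subset\R/\Z$ is a subset of a one-dimensional torus, so ``continuous Freiman'' is not quite the right tool; one works instead with a discretized version (replacing $\R/\Z$ by $\Z/p\Z$ for a large prime $p$, or by sampling $B_T$) and applies a discrete Freiman--Ruzsa or Green--Ruzsa theorem. Second, your explanation of the $(n')^{-r/2}$ gain is not how it actually arises: one does not rerun the argument on sub-multisets $A'$. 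The gain comes from a tensor-power/dilation trick---applying the crude structure theorem to the $m$-fold sumset of $A$ (or to dilates $tA$) and then pulling the improved volume bound back---together with a careful covering argument that trades the $n'$ exceptional elements for the factor $(n')^{-r/2}$ in GAP size. Third, the reduction to rank $O_{C,\ep}(1)$ is not an ``iterative absorption of short directions'' on the dual side; it is a consequence of the fact that Freiman--Ruzsa already gives bounded rank once the doubling constant is $O_C(1)$, and the $\ep$-dependence enters only through how aggressively one discretizes. If you want to make this a proof rather than a plan, each of these three steps needs to be made precise.
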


Our method can be extended to more general distributions. We just cite one below for our later applications. 

Let $0<\mu \le 1$ be a positive parameter. Let  $\eta^\mu$ be a random variable such that $\eta^\mu=1$ or $-1$ with probability $\mu/2$,
and $\eta^\mu=0$ with probability $1-\mu$.

\vskip .1in

\begin{theorem}\label{theorem:ILO:general}
The conclusion of Theorem \ref{theorem:ILO} also holds if the $x_i$ are iid copies of $\eta^\mu$.  
\end{theorem}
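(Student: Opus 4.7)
The plan is to adapt the Fourier-analytic proof of Theorem~\ref{theorem:ILO} (due to Nguyen and Vu) essentially verbatim. The only point in that argument at which the $\pm 1$ distribution enters is through the characteristic function $\psi(\xi) := \E[e^{2\pi i \xi x_1}]$. For symmetric Bernoulli one has $\psi(\xi) = \cos(2\pi \xi) = 1 - 2\sin^2(\pi\xi)$, while for $x_1 = \eta^\mu$ a direct computation gives
\begin{equation*}
\psi(\xi) \;=\; 1 - \mu + \mu\cos(2\pi\xi) \;=\; 1 - 2\mu \sin^2(\pi\xi).
\end{equation*}
Both distributions therefore obey a pointwise bound of the same exponential shape,
\begin{equation*}
|\psi(\xi)| \;\leq\; \exp\bigl(-c\mu\, \|\xi\|_{\R/\Z}^2\bigr),
\end{equation*}
for an absolute constant $c>0$, with $\mu=1$ recovering the Bernoulli case. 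This is the one and only ingredient that needs modification.

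After clearing denominators so that the $a_i$ are integers, the Fourier inversion identity gives
\begin{equation*}
\rho_\mu(A) \;\leq\; \int_0^1 \prod_i |\psi(a_i\xi)|\, d\xi \;\leq\; \int_0^1 \exp\!\Bigl(-c\mu \sum_i \|a_i \xi\|_{\R/\Z}^2\Bigr)\, d\xi.
\end{equation*}
Hence the hypothesis $\rho_\mu(A) \ge n^{-C}$ forces the level set $T := \{\xi \in \R/\Z : \sum_i \|a_i\xi\|_{\R/\Z}^2 \leq K\}$ to have measure at least $n^{-C - O(1)}$ for some $K = O_C(1/\mu)$. From here the remainder of the Nguyen-Vu argument carries over unchanged: iterated dilates of $T$ have bounded doubling, a Pl\"unnecke-Ruzsa/Freiman step yields a proper symmetric GAP $Q$ of bounded rank capturing most of the $a_i$, and a volume computation gives the quantitative bound on $|Q|$ claimed in Theorem~\ref{theorem:ILO}.

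The main (and really the only) obstacle is a bookkeeping one: verifying that every step in the Nguyen-Vu chain is robust to replacing the absolute constant $c$ by $c\mu$. The characteristic-function bound, the extraction of a large level set, the Freiman-type inverse theorem, and the repackaging into a GAP of prescribed rank and volume all pass through unchanged, with the only effect being that the thresholds and implied constants acquire a polynomial dependence on $1/\mu$. Since $\mu \in (0,1]$ is treated as a fixed parameter, this dependence is absorbed silently into the constants $O_{C,\eps}(\cdot)$ in the conclusion, and no new combinatorial or Fourier-analytic input is required beyond what already appears in the proof of Theorem~\ref{theorem:ILO}.
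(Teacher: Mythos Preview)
Your sketch is correct and is precisely the intended argument: the Nguyen--Vu proof of Theorem~\ref{theorem:ILO} uses the distribution of the $x_i$ only through the pointwise bound $|\E e^{2\pi i \xi x_1}| \le \exp(-c\|\xi\|_{\R/\Z}^2)$, and for $\eta^\mu$ one has the identical bound with $c$ replaced by $c\mu$, after which the level-set and Freiman steps proceed verbatim with constants depending on $\mu$. The paper itself does not supply a proof of Theorem~\ref{theorem:ILO:general}; it merely records the statement as a direct extension of the method in~\cite{NgV}, so there is nothing further to compare against.
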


\vskip .1in

\begin{remark} In their work to obtain the bound $p_n=O((3/4)^n)$, Tao and Vu studied a similar inverse problem. 

Let $0<\mu<1/4$ be a parameter, and let $\ep<1$ be a positive constant. 

Define 

$$\rho^{(\mu)}(A):=\sup_{a\in \R}\P(\sum_{i=1}^n a_i\eta_i^\mu =a).$$ 

It can be shown that $\rho(A)\le \rho^{(\mu)}(A)$. In \cite{TVsing}, Tao and Vu characterized those $A$ where $\rho(A)$ is comparable to $\rho_\mu(A)$,

$$\rho(A)\ge \ep \rho^{(\mu)}(A).$$

\end{remark}

\section{The inverse  Littlewood-Offord problem for bilinear forms}\label{section:bilinearILO}

Let $x_i,y_j$ be iid Bernoulli random variables, let $A=(a_{ij})$ be an $n\times n$ matrix of real entries. We define the {\it bilinear concentration probability} of $A$ by

$$\rho_b(A):= \sup_{a \in \R}\P(\sum_{i,j} a_{ij}x_iy_j=a).$$

More generally , if $x_i,y_j$ are iid copies of $\eta^\mu$, then the {\it weighted bilinear concentration probability} of $A$ is defined by

$$\rho_b^{(\mu)}(A)=\sup_{a \in \R}\P(\sum_{i,j} a_{ij}x_iy_j=a).$$

As an application of the Littlewood-Offord-Erd\H{o}s inequality \eqref{eqn:Erdos}, it has been shown in \cite{C} (also in \cite{CTV} with a weaker bound) that

\begin{theorem}[Bilinear Littlewood-Offord inequality]\label{theorem:bilinear:LOinequality}
Suppose that there are $\Theta(n)$ indices $i$ such that for each $i$ there are $\Theta(n)$ indices $j$ such that $a_{ij}\neq 0$. Then

$$\rho_b(A)=O(n^{-1/2}).$$
\end{theorem}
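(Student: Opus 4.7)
The plan is to condition on $y$ and reduce to the one-dimensional Erd\H{o}s--Littlewood--Offord inequality~\eqref{eqn:Erdos}. For each realization of $y$, setting $b_i(y):=\sum_{j} a_{ij}y_j$, we rewrite
$$\sum_{i,j} a_{ij}x_iy_j \;=\; \sum_{i} x_i\, b_i(y),$$
a $\pm 1$ linear combination of the fixed reals $b_i(y)$. Averaging first in $x$ and then in $y$ yields
$$\rho_b(A) \;\leq\; \E_y\!\left[\sup_{a\in\R}\P_x\!\left(\sum_i x_i b_i(y)=a\right)\right].$$

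Let $I$ be the set of $\Theta(n)$ ``good'' row indices supplied by the hypothesis, each corresponding to a row having $\Theta(n)$ nonzero entries. The main point is that, for most realizations of $y$, a positive fraction of the coordinates $\{b_i(y): i\in I\}$ are nonzero. Indeed, for each fixed $i\in I$, $b_i(y)$ is a linear form in iid Bernoullis with $\Theta(n)$ nonzero coefficients, so \eqref{eqn:Erdos} applied in the $y$-variables gives $\P_y(b_i(y)=0)=O(n^{-1/2})$. Summing over $i\in I$,
$$\E_y\bigl[\#\{i\in I: b_i(y)=0\}\bigr] \;=\; O\bigl(|I|\cdot n^{-1/2}\bigr) \;=\; O(n^{1/2}),$$
and Markov's inequality produces
$$\P_y\!\left(\#\{i\in I: b_i(y)=0\}\;\geq\;|I|/2\right)\;=\;O(n^{-1/2}).$$

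On the complementary event, at least $|I|/2=\Theta(n)$ of the $b_i(y)$ are nonzero, and \eqref{eqn:Erdos} applied a second time to $\sum_i x_i b_i(y)$ (restricting attention to the nonzero coordinates, the others contributing nothing) bounds the inner $\sup_a\P_x(\cdot)$ by $O(n^{-1/2})$. Combined with the $O(n^{-1/2})$ mass of the exceptional event, this gives $\rho_b(A)=O(n^{-1/2})$, as claimed.

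I do not anticipate an essential obstacle: the argument is a clean two-step decoupling, one invocation of \eqref{eqn:Erdos} in $y$ (to force the random vector $b(y)$ to be supported on $\Theta(n)$ coordinates with high probability) and one in $x$ (to produce the final $n^{-1/2}$). The only subtle point is that the Markov step works because the row-sparsity hypothesis keeps $|I|$ of order $n$ while the per-row vanishing probability is already $O(n^{-1/2})$; without the ``$\Theta(n)$ good rows'' assumption one would lose this margin and obtain only a weaker bound.
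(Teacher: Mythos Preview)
Your argument is correct. Note that the paper does not actually give its own proof of this theorem: it is quoted as a known result from \cite{C} (with a weaker version in \cite{CTV}), described there simply as ``an application of the Littlewood--Offord--Erd\H{o}s inequality \eqref{eqn:Erdos}.'' Your two-step conditioning---first using \eqref{eqn:Erdos} in the $y$-variables to show that $b_i(y)\neq 0$ for $\Theta(n)$ good rows with probability $1-O(n^{-1/2})$, then applying \eqref{eqn:Erdos} in the $x$-variables---is exactly the standard proof and matches what the cited references do.
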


The bound $O(n^{-1/2})$ is sharp, as the bilinear form $\sum_{i,j} x_iy_j$ shows.

The bilinear Littlewood-Offord inequality for the continuous setting was also studied in the literature. For instance, as an application of Kanter's inequality (Theorem \ref{theorem:Kanter}), it follows from a result of Rosi\'nski and Samorodnitsky \cite{RS} that 

\begin{theorem}  Let $\Phi$ be a symmetric convex measurable set in a vector space $V$, and $a_i\in V$. Assume that there are $\Theta(n)$ indices $i$ such that for each $i$ there are $\Theta(n)$ indices $j$ such that $a_{ij}\notin \Phi$. Then we have

$$\sup_{a\in V}\P(\sum_{i,j} a_{ij}x_iy_j \in a+\Phi) = O(n^{-1/16}).$$
\end{theorem}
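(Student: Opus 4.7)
The natural approach is to apply Kanter's inequality (Theorem~\ref{theorem:Kanter}) twice in a double--conditioning argument: first to the $y$--sum after freezing $x$, and then to each coefficient of that $y$--sum. Begin with a bookkeeping step. The hypothesis provides a set $I\subseteq\{1,\dots,n\}$ with $|I|=\Theta(n)$ and, for each $i\in I$, a set $J_i$ with $|J_i|=\Theta(n)$ and $a_{ij}\notin\Phi$ for $j\in J_i$. Double counting the $\Theta(n^{2})$ pairs $(i,j)\in I\times[n]$ with $a_{ij}\notin\Phi$ yields, by a standard averaging argument, a set $J\subseteq[n]$ of ``good columns'' with $|J|=\Theta(n)$ such that for every $j\in J$ the set $I_j:=\{i\in I:a_{ij}\notin\Phi\}$ has $|I_j|=\Theta(n)$. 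Up to constants, the hypothesis is therefore symmetric between rows and columns.

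Next, condition on $x=(x_1,\dots,x_n)$ and write $S=\sum_{j} y_j c_j(x)$ with $c_j(x):=\sum_i a_{ij}x_i\in V$. Let $k(x):=\#\{j:c_j(x)\notin\Phi\}$. Applying Theorem~\ref{theorem:Kanter} to the Bernoulli sum in the $y_j$'s,
\[
\sup_{a\in V}\Pr\bigl(S\in a+\Phi\,\big|\,x\bigr)=O\!\bigl(k(x)^{-1/2}\bigr)
\]
as soon as $k(x)=\Omega(n)$; on the complementary event use the trivial bound $1$.

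To show that $k(x)$ is $\Omega(n)$ with high probability, fix $j\in J$ and regard $c_j(x)=\sum_{i}a_{ij}x_i$ as a Bernoulli sum in $x$ whose coefficient vectors $a_{ij}\in V$ satisfy $a_{ij}\notin\Phi$ for all of the $|I_j|=\Theta(n)$ indices $i\in I_j$. A second application of Theorem~\ref{theorem:Kanter} gives $\Pr_x(c_j(x)\in\Phi)=O(n^{-1/2})$. Writing $k'(x):=\#\{j\in J:c_j(x)\notin\Phi\}\le k(x)$, linearity of expectation produces $\E_x[|J|-k'(x)]=O(|J|n^{-1/2})$, and Markov's inequality then yields $\Pr_x(k'(x)\le|J|/2)=O(n^{-1/2})$. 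Hence $k(x)\ge|J|/2=\Theta(n)$ with probability $1-O(n^{-1/2})$.

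Combining the two bounds gives $\sup_{a}\Pr(S\in a+\Phi)\le O(n^{-1/2})+O(n^{-1/2})=O(n^{-1/2})$, which is strictly stronger than the claimed $O(n^{-1/16})$. The one conceptual point worth flagging is that both applications of Theorem~\ref{theorem:Kanter} must be made against the \emph{same} symmetric convex set $\Phi$: the inner application controls $\Pr_x(c_j(x)\in\Phi)$, which is precisely the probability that $c_j(x)$ fails the hypothesis of the outer application. Once this observation is made, the rest of the argument is routine double counting plus a first moment Markov bound, so I do not anticipate a serious obstacle; the weaker exponent $1/16$ in the literature presumably reflects the fact that the proof in \cite{RS} does not pass through Kanter's inequality directly.
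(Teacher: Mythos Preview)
The paper does not supply its own proof of this statement; it merely quotes the result from Rosi\'nski--Samorodnitsky \cite{RS}, prefacing it with the remark that it arises ``as an application of Kanter's inequality (Theorem~\ref{theorem:Kanter}).'' Your argument carries out exactly that application---condition on $x$, apply Theorem~\ref{theorem:Kanter} in the $y$-variables once $\Theta(n)$ of the column sums $c_j(x)$ lie outside $\Phi$, and establish the latter event with probability $1-O(n^{-1/2})$ by a second use of Theorem~\ref{theorem:Kanter} in the $x$-variables together with Markov's inequality---and it is correct. Your bound $O(n^{-1/2})$ is genuinely sharper than the quoted $O(n^{-1/16})$ and in fact matches the discrete bilinear bound of Theorem~\ref{theorem:bilinear:LOinequality}; the weaker exponent in \cite{RS} comes from a general multilinear/decoupling framework rather than from any obstruction specific to the bilinear case, so there is no hidden gap in your approach.
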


Rosi\'nski and Samorodnitsky also studied concentration inequalities for more general multilinear forms. We refer the reader to \cite{RS} for further reading.

Motivated by the inverse Littlewood-Offord results for linear forms, our goal is to find the reason as to why $\rho_b(A)$ is large.

\begin{question}
Is it true that if $\rho_b(A)$ is large then there must be a ''structural" relation among the entries of $A$? 
\end{question} 

To answer this question, we first consider a few examples of $A$.

\begin{example}[Additive structure implies large concentration probability]\label{example:bilinear:1}
Let $Q$ be a proper symmetric GAP of rank $r=O(1)$ and of size $n^{O(1)}$. Assume that $a_{ij} \in Q$, for all $a_{ij}$. Then 
for any $x_i,y_j \in \{\pm 1\}$,

$$\sum_{i,j}a_{ij}x_iy_j\in n^2Q.$$

Thus, by the pigeon-hole principle, we have 

$$\rho_b(A)\ge n^{-2r}|Q|^{-1} =n^{-O(1)}.$$
   
\end{example}

Our next example shows that if the $a_{ij}$ are ``separable'', then $\rho_b(A)$ is also large. 

\begin{example}[Algebraic structure implies large concentration probability]\label{example:bilinear:2}
Assume that 

$$a_{ij}=k_ib_j + l_jb_i',$$ 

where $b_j,b_i'$ are arbitrary real numbers and $k_i,l_j\in \Z, |k_i|,|l_j|=n^{O(1)}$, such that 

$$\P_{x}(\sum_{i}k_ix_i=0)=n^{-O(1)}$$ 

and 

$$\P_{y}(\sum_{j}l_jy_j=0)=n^{-O(1)}.$$ 

Then we have

$$\P_{x,y}(\sum_{i,j}a_{ij}x_iy_j =0)=\P\left(\sum_{i}k_ix_i\sum_{j}b_jy_j+\sum_{i}b_i'x_i\sum_{j}l_jy_j=0\right)= n^{-O(1)}.$$

\end{example}

\begin{remark}
In the above example, the assumption that $k_i,l_j$ are integers seems unnecessary. However, because $\P_{x}(\sum_{i}k_ix_i=0)=n^{-O(1)}$ and $\P_{y}(\sum_{j}l_jy_j=0)=n^{-O(1)}$, Theorem \ref{theorem:ILO} implies that most of the $k_i$ and $l_j$ belong to a GAP of bounded size. Thus, without loss of generality, we may assume that $k_i,l_j$ are bounded integers.
\end{remark}

Our last example shows that a combination of additive structure and algebraic structure also implies high bilinear concentration probability.

\begin{example}[Structure implies large concentration probability]\label{example:bilinear:3}
Assume that $a_{ij}=a_{ij}' + a_{ij}''$, where $a_{ij}' \in Q$, a proper symmetric GAP of rank $O(1)$ and size $n^{O(1)}$, and  

$$a_{ij}'' = k_{i1}b_{1j}+\dots k_{ir}b_{rj}+ l_{1j}b_{i1}'+\dots+l_{rj}b_{ir}',$$ 

where $b_{1j},\dots, b_{rj}, b_{i1}',\dots, b_{ir}'$ are arbitrary and $k_{i1},\dots, k_{ir}, l_{1j},\dots, l_{rj}$ are integers bounded by $n^{O(1)}$, and $r=O(1)$ such that

$$\P_{x}\left(\sum_i k_{i1}x_i=0,\dots,\sum_i k_{ir}x_i=0\right)=n^{-O(1)}$$ 

and 

$$\P_{y}\left(\sum_{j}l_{1j}y_j=0,\dots, \sum_{j}l_{rj}y_j=0\right)=n^{-O(1)}.$$

Then we have 

\begin{align*}
\sum_{i,j}a_{ij}x_iy_j&= \sum_{i,j}a_{i,j}'x_iy_j + \sum_i k_{i1}x_i \sum_{j} b_{1j}y_j+\dots+\sum_i k_{ir}x_i\sum_{j} b_{rj}y_j\\
&+\sum_i b_{i1}'x_i\sum_{j} l_{1j}y_j +\dots+\sum_i b_{ir}'x_i\sum_{j} l_{rj}y_j.
\end{align*}

Thus, 

$$\P_{x,y}\left(\sum_{i,j}a_{ij}x_iy_j \in n^2Q \right)=n^{-O(1)}.$$ 

It then follows, by the pegion-hole principle, that $\rho_b(A)=n^{-O(1)}$.
\end{example}

The above examples demonstrate that if the $a_{ij}$ can be decomposed into additive and algebraic structural parts, then $\rho_b(A)$ is large. Our inverse result asserts that these are essentially the only ones that have large bilinear concentration probability.

\begin{theorem}[Inverse Littlewood-Offord theorem for bilinear forms]\label{theorem:ILO:bilinear:differentGAP} Let $\ep<1, C$ be positive constants. Assume that

$$\rho_b(A)\ge n^{-C}.$$ 

Then there exist index sets $I_0,J_0$, both of size $O_{C,\ep}(1)$, and index sets $I,J$, both of size $n-O_{C}(n^\ep)$, with $I\cap I_0 = \emptyset, J\cap J_0 =\emptyset$, and there exist integers $k,l, k_{ii_0},l_{jj_0}, i_0\in I_0, j_0\in J_0, i\in I, j\in J$, all of size bounded by $n^{O_{C,\ep}(1)}$, such that the following hold for all $i\in I$:

\begin{itemize}
 
\item for any $j\in J$,

$$a_{ij} = \frac{a_{ij}'}{kl} - \frac{\sum_{i_0\in I_0} k_{ii_0} a_{i_0j}}{k} - \frac{\sum_{j_0\in J_0}l_{j_0j} a_{ij_0}}{l};$$

\vskip .1in

\item all but $O_{C}(n^\ep)$ entries $a'_{ij}$ belong to a proper symmetric GAP $Q_i$ depending on $i$, which has rank $O_{C,\ep}(1)$ and size $n^{O_{C,\ep}(1)}$.

\end{itemize}

\end{theorem}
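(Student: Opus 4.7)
The strategy is to reduce the theorem to two applications of the inverse Littlewood-Offord theorem for linear forms (Theorem~\ref{theorem:ILO}), one in the $x$-variable and one in the $y$-variable, followed by a combinatorial merging of the resulting row-wise and column-wise structural descriptions.

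\emph{Step 1: conditioning on $y$.} Write the bilinear form as $\sum_i x_i (Ay)_i$ and set $\rho(y) := \sup_a \P_x\bigl(\sum_i (Ay)_i x_i = a\bigr)$. Since $\rho_b(A) \le \E_y \rho(y)$, a Markov-type argument produces a set $Y^{\ast} \subseteq \{\pm 1\}^n$ with $|Y^{\ast}|/2^n \ge n^{-C}/2$ on which $\rho(y) \ge n^{-C}/2$. Theorem~\ref{theorem:ILO} then furnishes, for each such $y$, a proper symmetric GAP $Q_y$ of rank $r = O_{C,\ep}(1)$ and size $n^{O_{C,\ep}(1)}$ containing all but $n^\ep$ of the entries $(Ay)_i$. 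A double-counting pigeonhole on pairs $(y,i)$ yields a single row index set $I \subseteq [n]$ with $|I| \ge n - O_{C,\ep}(n^\ep)$ and a sub-family $Y^{\ast\ast} \subseteq Y^{\ast}$ of density $\ge n^{-C}/4$ on which $(Ay)_i \in Q_y$ for every $y \in Y^{\ast\ast}$ and every $i \in I$.

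\emph{Step 2: extracting an integer row relation.} Choose a pivot set $I_0 \subset [n]\setminus I$ of size $r+1$. For each $y \in Y^{\ast\ast}$ and each $i \in I$, the $r+2$ points $\{(Ay)_{i_0} : i_0 \in I_0\} \cup \{(Ay)_i\}$ lie in the rank-$r$ GAP $Q_y$ and therefore admit a nontrivial integer linear dependence with coefficients bounded by $n^{O_{C,\ep}(1)}$. Since only $n^{O_{C,\ep}(1)}$ such coefficient tuples exist, further pigeonholing produces, for each $i \in I$, $y$-independent integers $k$ and $k_{i i_0}$ of size $n^{O_{C,\ep}(1)}$ for which the linear form $\bigl\langle k\,\row_i(A) - \sum_{i_0} k_{i i_0}\,\row_{i_0}(A),\,y\bigr\rangle$ takes values in a GAP of the same order of complexity for a positive-density family of $y$. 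Applying Theorem~\ref{theorem:ILO} once more to this linear form in $y$ (whose concentration is now $n^{-O_{C,\ep}(1)}$) shows that all but $O_{C,\ep}(n^\ep)$ entries of $k\,a_{ij} - \sum_{i_0 \in I_0} k_{i i_0}\,a_{i_0 j}$ (as $j$ varies) lie in a proper symmetric GAP of rank $O_{C,\ep}(1)$ and size $n^{O_{C,\ep}(1)}$.

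\emph{Step 3: symmetric column argument and combination.} Exchanging the roles of $x$ and $y$ and repeating Steps 1--2 yields a column index set $J \subseteq [n]$ with $|J| \ge n - O_{C,\ep}(n^\ep)$, a column pivot $J_0$ of size $O_{C,\ep}(1)$, and integers $l$ and $l_{j_0 j}$ of size $n^{O_{C,\ep}(1)}$ such that $l\,a_{ij} - \sum_{j_0 \in J_0} l_{j_0 j}\,a_{i j_0}$ lies in a GAP of rank $O_{C,\ep}(1)$ and size $n^{O_{C,\ep}(1)}$ for all but $O_{C,\ep}(n^\ep)$ indices $j$ (for each fixed $i$ in a large row set). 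Multiplying the row-side relation by $l$, the column-side relation by $k$, and combining gives
\[
kl\,a_{ij} + l\sum_{i_0 \in I_0} k_{i i_0}\,a_{i_0 j} + k\sum_{j_0 \in J_0} l_{j_0 j}\,a_{i j_0} \;=\; a'_{ij},
\]
valid for $i \in I$ and for all but $O_{C,\ep}(n^\ep)$ columns $j \in J$, where $a'_{ij}$ is a bounded-integer combination of elements from the two residual GAPs and therefore itself lies in a proper symmetric GAP $Q_i$ of rank $O_{C,\ep}(1)$ and size $n^{O_{C,\ep}(1)}$. Dividing through by $kl$ yields the claimed formula.

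\emph{Main obstacle.} The delicate point is Step 2: one must upgrade the $y$-dependent GAP memberships $(Ay)_i \in Q_y$, which a priori carry no coherent structure across different $y$, into a \emph{single} integer row relation whose coefficients are independent of $y$. The key is a two-level pigeonhole --- first over the $n^{O_{C,\ep}(r)}$ possible coordinate vectors in $Q_y$, then over the $n^{O_{C,\ep}(1)}$ possible integer dependences among $r+2$ elements of a rank-$r$ GAP --- combined with the linearity of $y \mapsto Ay$ and a second invocation of Theorem~\ref{theorem:ILO} to promote the pointwise dependence into a statement about the row space of $A$. A secondary subtlety is that the row-side and column-side residual GAPs are \emph{a priori} unrelated, but since each has rank $O_{C,\ep}(1)$ and size $n^{O_{C,\ep}(1)}$, their bounded integer combinations still live in a GAP of the same order of complexity, allowing $a'_{ij}$ to be absorbed into a single GAP $Q_i$ as required.
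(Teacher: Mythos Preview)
Your overall strategy matches the paper's: condition on $y$, apply the linear inverse theorem to the sequence $\langle\row_i,y\rangle$, pigeonhole to extract $y$-independent integer relations among the rows, repeat for columns, and combine via $A'=LAR$. However, Step~2 as written has a real gap.

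First, the pivot set $I_0$ cannot be taken from $[n]\setminus I$: those are precisely the exceptional indices for which $(Ay)_{i_0}$ need not lie in $Q_y$, so your claim that $\{(Ay)_{i_0}\}_{i_0\in I_0}\cup\{(Ay)_i\}$ all sit in $Q_y$ is unfounded. Second, and more seriously, even with $I_0$ chosen inside the good set, a generic integer dependence among $r+2$ points of a rank-$r$ GAP need not have nonzero coefficient on $(Ay)_i$, and your pigeonholing over $y$ does nothing to make the leading coefficient $k$ uniform across $i\in I$ --- yet the theorem demands a single $k$. Pigeonholing over $i$ as well would cost a factor $n^{O_{C,\ep}(1)}$ in $|I|$, destroying the required bound $|I|\ge n-O_C(n^\ep)$. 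The paper resolves both issues by first invoking the full-rank reduction (Theorem~\ref{theorem:fullrank}) so that the $\langle\row_i,y\rangle$ may be assumed to \emph{span} $Q_y$, and then pigeonholing over $y$ to freeze not only a spanning tuple $I_0$ of size $r$ but also the coordinate vectors $\mathbf{v}_{i_0}$ of $\langle\row_{i_0},y\rangle$ in $Q_y$. With these frozen, Cramer's rule yields for every remaining $i$ an \emph{exact} identity
\[
k\langle\row_i,y\rangle \;+\; \sum_{i_0\in I_0} k_{ii_0}\langle\row_{i_0},y\rangle \;=\;0,
\]
where $k=\det(\mathbf{v}_1,\dots,\mathbf{v}_r)$ is the determinant of the frozen coordinate matrix and is therefore automatically independent of $i$. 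This exact vanishing on a set of $y$'s of density $n^{-O_{C,\ep}(1)}$ (rather than mere membership in a GAP) is what licenses the second application of Theorem~\ref{theorem:ILO} to the row $k\,\row_i+\sum_{i_0}k_{ii_0}\row_{i_0}$.

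A minor point: your Step~1 claim that $(Ay)_i\in Q_y$ holds simultaneously for all $i\in I$ and all $y\in Y^{\ast\ast}$ is stronger than what double-counting delivers and stronger than what is needed; the paper only uses that for each fixed $i$ in a large set $I$, the membership $i\in I_y$ holds for at least half of the relevant $y$.
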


Although Theorem \ref{theorem:ILO:bilinear:differentGAP} is enough for our later application, it does not yet reflect the examples given, namely the additive structures $Q_i$ corresponding to each row can be totally different. In the next theorem we show that these GAPs can be unified into a structure similar to a GAP.

\begin{theorem}[Inverse Littlewood-Offord theorem for bilinear forms, common structure]\label{theorem:ILO:bilinear:commonGAP}
Let $\ep<1, C$ be positive constants. Assume that

$$\rho_b(A)\ge n^{-C}.$$ 

Then there exist index sets $I_0,J_0$, both of size $O_{C,\ep}(1)$, and index sets $I,J$, both of size $n-O_{C}(n^\ep)$, with $I\cap I_0 = \emptyset, J\cap J_0 =\emptyset$, and there exist integers $k,l, k_{ii_0},l_{jj_0}, i_0\in I_0, j_0\in J_0, i\in I, j\in J$, all of size bounded by $n^{O_{C,\ep}(1)}$, such that for all $i\in I$ the following hold:

\begin{itemize}
 
\item for any $j\in J$,

$$a_{ij} = \frac{a_{ij}'}{kl} - \frac{\sum_{i_0\in I_0} k_{ii_0} a_{i_0j}}{k} - \frac{\sum_{j_0\in J_0}l_{j_0j} a_{ij_0}}{l};$$

\vskip .1in

\item all but $O_{C}(n^\ep)$ entries $a'_{ij}$  belong to a set $Q$ (independent of $i$) of the form 

$$Q= \{\sum_{h=1}^{O_{C,\ep}(1)} (p_h/q_h)\cdot g_h; p_h,q_h\in \Z, |p_h|,|q_h|=n^{O_{C,\ep}(1)}\}.$$

\end{itemize}

\end{theorem}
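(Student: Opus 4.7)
The plan is to derive Theorem~\ref{theorem:ILO:bilinear:commonGAP} as a post-processing of Theorem~\ref{theorem:ILO:bilinear:differentGAP}. The decomposition formula, the pivot index sets $I_0,J_0,I,J$, and the integers $k,l,k_{ii_0},l_{jj_0}$ are inherited directly from the previous theorem, and the only additional work is to unify the per-row GAPs $Q_i$ into a common rational set $Q$ of the claimed form.

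First, I would apply Theorem~\ref{theorem:ILO:bilinear:differentGAP} to obtain, for each $i\in I$, a proper symmetric GAP $Q_i$ of rank $r_i=O_{C,\ep}(1)$ and size $n^{O_{C,\ep}(1)}$ containing all but $O_C(n^\ep)$ of the entries $a'_{ij}$ with $j\in J$. Fix a pivot row $i_1\in I$ and take the generators of $Q_{i_1}$ as the initial list of common generators $g_1,\dots,g_R$. Rewriting the decomposition as $a'_{ij}=kl\cdot a_{ij}+l\sum_{i_0\in I_0}k_{ii_0}a_{i_0j}+k\sum_{j_0\in J_0}l_{j_0j}a_{ij_0}$, one sees that, for fixed $i$ and varying $j$, the contribution $k\sum_{j_0}l_{j_0j}a_{i,j_0}$ sweeps out a GAP of rank at most $|J_0|=O_{C,\ep}(1)$ whose generators are $k\cdot a_{i,j_0}$; the contribution $l\sum_{i_0}k_{ii_0}a_{i_0,j}$ takes values in a set spanned by the column-pivot entries $a_{i_0,j}$ with bounded integer weights $k_{ii_0}$. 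To keep track of how these generators vary with $i$, I would invoke the column-side version of Theorem~\ref{theorem:ILO:bilinear:differentGAP} (obtained by swapping the roles of $x$ and $y$), which supplies analogous structural GAPs for the pivot columns $(a_{i,j_0})_{i\in I}$ and pivot rows $(a_{i_0,j})_{j\in J}$, and adjoin their generators to the common list $\{g_h\}$.

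Iterating through $i\in I\setminus\{i_1\}$, whenever $Q_i$ produces a generator not already in the $\mathbb{Q}$-span of the current $\{g_h\}$ with numerators and denominators of size $n^{O_{C,\ep}(1)}$, I adjoin it to the list. The decomposition then ensures that every $a'_{ij}$ can be written as a rational combination $\sum_h(p_h/q_h)g_h$ with $|p_h|,|q_h|=n^{O_{C,\ep}(1)}$, since the substitutions only involve multiplication and division by the bounded quantities $k,l,k_{ii_0},l_{jj_0}$ and by the previously adjoined column/row pivot generators.

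The main obstacle is bounding the final rank $R$ uniformly by $O_{C,\ep}(1)$ even though $|I|\sim n$ rows are processed. This is handled by a $\mathbb{Q}$-dimension argument: if at some stage the $\mathbb{Q}$-span of the accumulated generators had dimension larger than a constant depending only on $C$ and $\ep$, then one could select the corresponding subset of rows of $A$ whose GAPs contribute $\mathbb{Q}$-linearly independent generators and apply Theorem~\ref{theorem:ILO} to the induced linear forms to derive a contradiction with $\rho_b(A)\ge n^{-C}$. Tracking the numerator and denominator growth through the cascade of rational substitutions is the most delicate part of the bookkeeping, but since there are only $O_{C,\ep}(1)$ substitution levels and each is controlled by a polynomial factor in $n$, the final bound $|p_h|,|q_h|=n^{O_{C,\ep}(1)}$ survives.
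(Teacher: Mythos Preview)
Your plan has a genuine gap at the step you yourself flag as the main obstacle: bounding the number of accumulated generators by $O_{C,\ep}(1)$. The proposed ``$\mathbb{Q}$-dimension argument'' does not work. Having many rows whose GAPs $Q_i$ contribute $\mathbb{Q}$-linearly independent generators is \emph{not} in contradiction with $\rho_b(A)\ge n^{-C}$, and Theorem~\ref{theorem:ILO} cannot manufacture such a contradiction: that theorem constrains the coefficients of a \emph{single} linear form, not the relationship between GAPs arising from different rows. A simple obstruction is a matrix with constant rows $a'_{i_kj}=g_k$ for a set of $\mathbb{Q}$-independent reals $g_1,\dots,g_R$; the bilinear form then factors through $\sum_j y_j$ and still concentrates with probability $\Omega(n^{-1/2})$, yet your iteration would adjoin all $R$ generators. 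So iterating row by row and hoping the rank stays bounded does not close.

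The paper's argument avoids this entirely by exploiting the two-sided structure in a way your sketch does not. After setting up the matrix $A'=LAR$ (Theorem~\ref{theorem:matrixrelation}), one has \emph{both} per-row GAPs $Q_i$ \emph{and} per-column GAPs $P_j$. The unification (Lemma~\ref{lemma:commonGAP}) then runs over \emph{columns}, not rows: one greedily selects a bounded set of column indices $j_1,\dots,j_s$ (with $s=O_{C,\ep}(1)$) so that, for all but $\ep n$ rows $i$, the defining vectors of the entries $a'_{ij_1},\dots,a'_{ij_s}$ inside $Q_i$ span the relevant subspace $H_i$. Once a row is ``complete'' in this sense, Cramer's rule expresses every other entry $a'_{ij}$ of that row as a rational combination (with numerators and denominators $n^{O_{C,\ep}(1)}$) of $a'_{ij_1},\dots,a'_{ij_s}$; and each $a'_{ij_k}$ lies in the column GAP $P_{j_k}$. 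The common generator set $\{g_h\}$ is thus the union of the generators of these $s=O_{C,\ep}(1)$ column GAPs. The greedy selection terminates in $O_{C,\ep}(1)$ steps because each step increases the total dimension $\sum_i \dim(\cdot)$ by a positive fraction of the number of incomplete rows, and that total is at most $rn$. This is the missing idea: the column GAPs, not an abstract rank bound, are what furnish a bounded common generating set.
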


Our proof of Theorem \ref{theorem:ILO:bilinear:differentGAP} and \ref{theorem:ILO:bilinear:commonGAP} can be extended (rather automatically) to other Bernoulli distributions.

\begin{theorem}\label{theorem:ILO:bilinear:general} Let $0<\mu \le 1$ be a constant. Then the conclusions of Theorem \ref{theorem:ILO:bilinear:differentGAP} and Theorem \ref{theorem:ILO:bilinear:commonGAP} also hold if we assume that $\rho_b^{(\mu)}(A)\ge n^{-C}$.  
\end{theorem}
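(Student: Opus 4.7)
The plan is to run the proofs of Theorems \ref{theorem:ILO:bilinear:differentGAP} and \ref{theorem:ILO:bilinear:commonGAP} essentially verbatim, with one key substitution: every invocation of the linear inverse theorem (Theorem \ref{theorem:ILO}) is replaced by its $\eta^\mu$-analogue (Theorem \ref{theorem:ILO:general}). The strategy behind the bilinear arguments is to reduce to the linear case by conditioning. Writing the bilinear form as a linear form in $x$ (for fixed $y$) and applying Cauchy-Schwarz, one obtains
\[
\bigl(\rho_b^{(\mu)}(A)\bigr)^2 \;\le\; \E_y\bigl[\rho^{(\mu)}\bigl(r_1(y),\dots,r_n(y)\bigr)\bigr],
\]
where $r_i(y) := \sum_j a_{ij}y_j$ and the expectation is taken over iid copies of $\eta^\mu$. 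The hypothesis $\rho_b^{(\mu)}(A) \ge n^{-C}$ then forces the inner concentration probability to be at least $n^{-O(C)}$ on a set of $y$'s of $\mu$-weighted measure at least $n^{-O(C)}$. This averaging step is purely probabilistic and uses nothing about the underlying Bernoulli distribution beyond Cauchy-Schwarz.

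For each such $y$ I would apply Theorem \ref{theorem:ILO:general} to the coefficient sequence $(r_i(y))_{i=1}^n$, obtaining a proper symmetric GAP $Q_y$ of rank $O_{C,\ep}(1)$ and size $n^{O_{C,\ep}(1)}$ containing all but at most $n^\ep$ of the $r_i(y)$. By the same pigeonholing as in the $\pm 1$ case, which relies only on the finiteness of the possible combinatorial types of GAPs of bounded rank and polynomial size, one extracts a common GAP shared by a large fraction of the admissible $y$'s. Running the symmetric argument with the roles of $x$ and $y$ swapped, and then combining the two orthogonality systems via the linear-algebra manipulation used for Theorem \ref{theorem:ILO:bilinear:differentGAP}, one recovers the decomposition
\[
a_{ij} \;=\; \frac{a_{ij}'}{kl} \;-\; \frac{\sum_{i_0\in I_0} k_{ii_0}\, a_{i_0 j}}{k} \;-\; \frac{\sum_{j_0\in J_0} l_{j_0 j}\, a_{i j_0}}{l}
\]
for all $(i,j)\in I\times J$. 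The unification of the row-wise GAPs $Q_i$ into a common set $Q$, which is what is required for the analogue of Theorem \ref{theorem:ILO:bilinear:commonGAP}, is obtained exactly as in the $\pm 1$ case, by intersecting the additive relations over sufficiently many random $y$'s.

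The only place where the $\eta^\mu$ hypothesis could plausibly cause trouble is the quantitative averaging step: one must verify that the $\mu$-weighted Cauchy-Schwarz inequality loses only a factor depending on $\mu$ (and not on $n$). Since $\mu$ is a fixed positive constant and the second moment of $\eta^\mu$ is $\mu=\Theta(1)$, this loss is absorbed into the implicit constants $O_{C,\ep}(1)$ and $O_{C}(n^\ep)$ in the conclusions. Every subsequent step, namely pigeonholing over GAP types, extracting common generators, and solving the linear system relating the $a_{ij}$ to the anchor rows and columns, is algebraic or combinatorial and therefore distribution-agnostic. Hence the proof goes through automatically, which is what the remark before the theorem already foreshadows.
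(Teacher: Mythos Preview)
Your proposal is correct and follows the same approach as the paper: Section \ref{section:proof:bilinearILO} is in fact written from the outset for the general $\eta^\mu$ distribution (Lemma \ref{lemma:roworthogonal} onward already carries the parameter $\mu$ and invokes Theorem \ref{theorem:ILO:general}), so Theorem \ref{theorem:ILO:bilinear:general} is established simultaneously with Theorems \ref{theorem:ILO:bilinear:differentGAP} and \ref{theorem:ILO:bilinear:commonGAP}. The only cosmetic difference is that the paper extracts the set of good $y$'s via a direct Markov-type averaging rather than Cauchy--Schwarz.
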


\vskip .1in

\begin{remark}
The inverse Littlewood-Offord problem for bilinear forms was also studied in \cite{C}, but only for the case $\rho_b(A)\ge n^{-1+o(1)}$.
\end{remark}

\section{The inverse  Littlewood-Offord problem for quadratic forms}\label{section:quadraticILO}

Let $x_i$ be iid Bernoulli random variables, let $A=(a_{ij})$ be an $n\times n$ symmetric matrix of real entries. We define the {\it quadratic concentration probability} of $A$ by

$$\rho_q(A):= \sup_{a \in \R}\P(\sum_{i,j} a_{ij}x_ix_j=a).$$

More general, if $x_i$ are iid copies of $\eta^\mu$, then the {\it weighted quadratic concentration probability} of $A$ is defined by

$$\rho_q^{(\mu)}(A):= \sup_{a \in \R}\P(\sum_{i,j} a_{ij}x_ix_j=a).$$

It was shown in \cite{C,CTV}, as an application of Theorem \ref{theorem:bilinear:LOinequality},  that  

\begin{theorem}[Quadratic Littlewood-Offord inequality]\label{theorem:quadratic:LOinequality} Suppose that there are $\Theta(n)$ indices $i$ such that for each $i$ there are $\Theta(n)$ indices $j$ such that $a_{ij}\neq 0$. Then

$$\rho_q(A)\le n^{-1/2+o(1)}.$$ 
\end{theorem}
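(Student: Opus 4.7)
The plan is to reduce the quadratic concentration problem to the bilinear one and invoke Theorem \ref{theorem:bilinear:LOinequality}, via a decoupling (symmetrization) argument. Let $x'$ be an independent copy of $x$. Using the symmetry of $A$,
$$Q(x) - Q(x') \;=\; x^T A x - (x')^T A x' \;=\; (x - x')^T A (x + x').$$
Introduce the random partition $T := \{i : x_i \neq x_i'\}$ and $S := \{i : x_i = x_i'\}$. On $T$ the coordinates of $x - x'$ equal $2 x_i$ and those of $x + x'$ vanish, while on $S$ the opposite happens. Substituting collapses the right-hand side to
$$Q(x) - Q(x') \;=\; 4 \sum_{i \in T,\, j \in S} a_{ij}\, x_i\, x_j.$$
Conditional on $(T,S)$, the families $(x_i)_{i \in T}$ and $(x_j)_{j \in S}$ are independent iid $\pm 1$, so the right-hand side is a genuine bilinear form in independent Bernoulli vectors with coefficient matrix $A_{T \times S}$.

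Next I would verify that the structural hypothesis of Theorem \ref{theorem:bilinear:LOinequality} is inherited by $A_{T \times S}$ for almost every realization of the partition. Since each index is independently placed in $T$ or $S$ with probability $1/2$, a routine Chernoff bound applied to the $\Theta(n)$ heavy rows guaranteed by the hypothesis on $A$ shows that, with probability $1 - o(1)$, there are $\Theta(|T|)$ rows $i \in T$ each carrying $\Theta(|S|)$ nonzero entries among the $S$-columns; call such a partition \emph{good}. On the good event Theorem \ref{theorem:bilinear:LOinequality} yields
$$\Pr\!\Big( \sum_{i \in T,\, j \in S} a_{ij}\, x_i\, x_j = 0 \;\Big|\; T,S \Big) \;=\; O(n^{-1/2}),$$
so that, after absorbing the negligible bad partitions, $\Pr(Q(x) = Q(x')) = O(n^{-1/2})$.

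Finally one has to convert a bound on $\Pr(Q(x) = Q(x'))$ into one on $\rho_q(A) = \sup_a \Pr(Q(x) = a)$. By independence of $x$ and $x'$, $\sup_a \Pr(Q(x) = a)^2 \le \Pr(Q(x) = Q(x'))$. This is the step I expect to be the main obstacle: the naive Cauchy--Schwarz costs a square root and only produces $\rho_q(A) = O(n^{-1/4})$, whereas the theorem claims the much stronger exponent $1/2$. To recover the full exponent, the decoupling must be carried out more carefully -- for instance iteratively, or by conditioning on almost all of the variables and applying the bilinear inequality to a small leftover piece, as in \cite{C} -- so that the square-root loss is absorbed into an $n^{o(1)}$ factor rather than a polynomial one. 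Once this refinement is in place, the announced bound $\rho_q(A) \le n^{-1/2 + o(1)}$ follows.
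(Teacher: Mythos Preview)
The paper does not actually prove this theorem; it merely cites it from \cite{C,CTV} (``It was shown in \cite{C,CTV}, as an application of Theorem \ref{theorem:bilinear:LOinequality}, that \ldots''). The closest the paper comes is Lemma \ref{lemma:decoupling}, a Fourier--analytic decoupling giving $\rho_q(A)^8 \le \rho_b^{(1/2)}(A_U)$, which is used for the inverse problem and, combined naively with Theorem \ref{theorem:bilinear:LOinequality}, would only give $\rho_q(A)=O(n^{-1/16})$. So there is no in-paper proof to compare against, and your decoupling-plus-bilinear scheme is indeed the standard high-level route.

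That said, your write-up has a genuine gap, and you have identified it yourself: your argument stops at $\rho_q(A)=O(n^{-1/4})$. The decoupling $Q(x)-Q(x')=(x-x')^T A(x+x')$ together with the bilinear inequality gives $\P(Q(x)=Q(x'))=O(n^{-1/2})$, and the inequality $\rho_q(A)^2\le \P(Q(x)=Q(x'))$ then yields only the exponent $-1/4$. You then assert that ``the decoupling must be carried out more carefully \ldots\ so that the square-root loss is absorbed into an $n^{o(1)}$ factor,'' but you do not carry this out. That step is precisely the content of the theorem: without it you have proved $\rho_q(A)=O(n^{-1/4})$, which is the older \cite{CTV}-type bound, not the $n^{-1/2+o(1)}$ of \cite{C}. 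A complete proof requires an explicit mechanism---e.g.\ Costello's iterative/halving argument, or a bootstrapping of the decoupling that trades the polynomial loss for a subpolynomial one---and until that is written down the proposal is a sketch of the $n^{-1/4}$ result with a promissory note for the rest.
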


The bound $n^{-1/2+o(1)}$ is almost best possible, as demonstrated by the quadratic form $\sum_{ij}x_ix_j$. 

A more general version of Theorem \ref{theorem:quadratic:LOinequality} also appeared in the mentioned paper of Rosi\'nski and Samorodnitsky. 

\begin{theorem} \cite[Theorem 3.1]{RS}  Let $\Phi$ be a symmetric convex measurable set in a vector space $V$, and $a_i\in V$. Assume that there are $\Theta(n)$ indices $i$ such that for each $i$ there are $\Theta(n)$ indices $j$ such that $a_{ij}\notin \Phi$. Then we have

$$\sup_{a\in V}\P(\sum_{i,j} a_{ij}x_ix_j \in a+\Phi) = O(n^{-1/16}).$$
\end{theorem}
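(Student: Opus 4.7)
The plan is to reduce the quadratic concentration bound to the bilinear Littlewood--Offord--Kanter inequality already stated in the paper, via a symmetrization--decoupling argument that converts the quadratic chaos into a genuine bilinear form in independent Bernoulli variables.

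Write $Q(x) := \sum_{i,j} a_{ij} x_i x_j$ and let $p := \sup_{a \in V} \P(Q(x) \in a + \Phi)$. Using an independent copy $x'$ of $x$, the standard symmetrization inequality for a symmetric convex $\Phi$ gives
$$p^2 \le \P\bigl(Q(x) - Q(x') \in 2\Phi\bigr).$$
Introduce $u_i := (x_i + x_i')/2$ and $v_i := (x_i - x_i')/2$. Since $x_i^2 = x_i'^2 = 1$ we have $u_i v_i = 0$, and direct expansion using $a_{ij} = a_{ji}$ collapses to
$$Q(x) - Q(x') \;=\; 4 \sum_{i,j} a_{ij}\, u_i\, v_j \;=\; 4 \sum_{i \ne j} a_{ij}\, u_i\, v_j.$$
The pairs $(u_i, v_i)$ are independent across $i$, each uniform on $\{(\pm 1, 0), (0, \pm 1)\}$. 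Hence the random partition $[n] = U \sqcup V$ with $U := \{i : u_i \ne 0\}$ and $V := \{i : v_i \ne 0\}$ is a uniformly balanced random partition, and conditional on it the active variables $(u_i)_{i \in U}$ and $(v_j)_{j \in V}$ are independent $\pm 1$ Bernoullis.

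The next step is to check that with overwhelming probability over the partition, the restricted matrix $(a_{ij})_{i \in U, j \in V}$ inherits the density hypothesis of the bilinear theorem: there are $\Theta(n)$ indices $i \in U$ each with $\Theta(n)$ indices $j \in V$ such that $a_{ij} \notin \Phi$. Since every $i$ lands in $U$ and every $j$ in $V$ independently with probability $1/2$, a Chernoff bound applied row by row, followed by a union bound over the heavy rows of the original matrix, gives this with probability $1 - e^{-\Omega(n)}$. Conditioning on a good partition and applying the bilinear Rosi\'nski--Samorodnitsky inequality to the form $\sum_{i \in U, j \in V} a_{ij}\, u_i v_j$ with the symmetric convex set $\Phi/2 \subset \Phi$ (the hypothesis ``$a_{ij} \notin \Phi/2$'' follows from ``$a_{ij} \notin \Phi$'' since $\Phi/2 \subset \Phi$) yields
$$\P\bigl(Q(x) - Q(x') \in 2\Phi\bigr) \;=\; O(n^{-1/16}),$$
whence $p = O(n^{-1/32})$.

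The hard part is that this symmetrization step costs a square root, so the approach above only recovers the exponent $1/32$ rather than the claimed $1/16$. Closing this gap seems to require bypassing the inequality $p^2 \le \P(Q - Q' \in 2\Phi)$ altogether and proving a direct Kanter-type inequality for the quadratic chaos $Q(x)$, in the same spirit as the bilinear bound itself: one conditions on a ``suitably rich'' block of the coordinates, extracts the linear form in the remaining coordinates, and invokes Kanter's vector-valued concentration inequality uniformly in the center $a \in V$. Maintaining uniformity over $a$ and over the convex body $\Phi$ throughout this two-scale conditioning is, I expect, where the genuine technical difficulty of the Rosi\'nski--Samorodnitsky argument lives.
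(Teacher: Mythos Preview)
The paper does not contain a proof of this statement at all: it is quoted directly from Rosi\'nski--Samorodnitsky \cite[Theorem~3.1]{RS} as background for the discussion of quadratic Littlewood--Offord inequalities, and no argument is supplied or even sketched. So there is no ``paper's own proof'' against which to compare your attempt.

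Regarding the argument itself: the symmetrization--decoupling reduction you carry out is mathematically sound. The identity $Q(x)-Q(x') = 4\sum_{i,j} a_{ij}u_iv_j$ is correct, the description of $(u_i,v_i)$ as uniform on $\{(\pm 1,0),(0,\pm 1)\}$ is correct, and the Chernoff step transporting the density hypothesis to the restricted matrix $(a_{ij})_{i\in U,\,j\in V}$ is routine. You are also candid that the square-root loss from the inequality $p^2 \le \P(Q-Q'\in 2\Phi)$ leaves you at $O(n^{-1/32})$ rather than the stated $O(n^{-1/16})$, so as written the proposal proves a weaker (though still nontrivial) bound and does not establish the theorem as stated. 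Incidentally, the paper's own decoupling lemma (Lemma~\ref{lemma:decoupling}) is in a similar spirit but even lossier, passing through an eighth power; it is used for the inverse problem, where constants in the exponent are irrelevant, and is not intended as a route to the Rosi\'nski--Samorodnitsky bound. To recover the exponent $1/16$ you would need to consult \cite{RS} directly; the present paper offers no guidance on that point.
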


Motivated by the inverse Littlewood-Offord results for linear forms and bilinear forms, we would like to characterize those $A$ which have large quadratic concentration probability. 

We first consider a few examples of $A$ when $\rho_q(A)$ is large, based on the examples given in the previous sections.

\begin{example}[Additive structure implies large concentration probability]\label{example:quadratic:1} Let $Q$ be a proper symmetric GAP of rank $r=O(1)$ and of size $n^{O(1)}$. Assume that $a_{ij} \in Q$, then for any $x_i\in \{\pm 1\}$

$$\sum_{i,j}a_{ij}x_ix_j\in n^2Q.$$ 

Thus, by the pigeon-hole principle, 

$$\rho_q(A)\ge n^{-2r}|Q|^{-1} =n^{-O(1)}.$$ 
  
\end{example}

Similar to Example \ref{example:bilinear:2}, our next example shows that if the $a_{ij}$ are separable, then $\rho_q(A)$ is large.

\begin{example}[Algebraic structure implies large concentration probability]\label{example:quadratic:2}
Assume that 

$$a_{ij}= k_ib_j+k_jb_i$$ 

where $k_i\in \Z, |k_i|=n^{O(1)}$ and such that $\P_x(\sum_i k_ix_i= 0)=n^{-O(1)}$. 

Then we have 

$$\P(\sum_{i,j}a_{ij}x_ix_j =0) =\P(\sum_{i}k_ix_i  \sum_jb_jx_j =0)=n^{-O(1)}.$$

\end{example}

In our last example, we show that a combination of both structures also implies high quadratic concentration probability. 

\begin{example}[Structure implies large concentration probability]\label{example:quadratic:3}
Assume that $a_{ij}=a_{ij}' +a_{ij}''$, where $a_{ij}'\in Q$, a proper symmetric GAP of rank $O(1)$ and size $n^{O(1)}$, and 

$$a_{ij}''= k_{i1}b_{1j}+k_{j1}b_{1i}+\dots+k_{ir}b_{rj}+k_{jr}b_{ri},$$ 

where $b_{1i},\dots, b_{ri}$ are arbitrary and $k_{i1},\dots,k_{ir}$ are integers bounded by $n^{O(1)}$, and $r=O(1)$ such that

$$\P_{x}\left(\sum_i k_{i1}x_i=0,\dots,\sum_i k_{ir}x_i=0\right)=n^{-O(1)}.$$ 

Then we have 

$$\sum_{i,j} a_{ij}x_ix_j = \sum_{i,j}a_{i,j}'x_ix_j + (\sum_{i} k_{i1}x_i)(\sum_{j} b_{1j}x_j)+\dots + (\sum_{i} k_{ir}x_i)(\sum_{i} b_{rj}x_j).$$

Thus,

$$\P_x(\sum_{i,j}a_{ij}x_ix_j \in n^2Q)=n^{-O(1)}.$$

It then follows, by the pigeon-hole principle, that $\rho_q(A)=n^{-O(1)}$.
\end{example}

Next we state our main result which asserts that the examples above are essentially the only ones that have high quadratic concentration probability.

\begin{theorem}[Inverse Littlewood-Offord theorem for quadratic forms]\label{theorem:ILO:quadratic:differentGAP}
Let $\ep<1, C$ be positive constants. Assume that 

$$\rho_q(A)\ge n^{-C}.$$ 

Then there exist index sets $I_0$ and $I$ of size $O_{C,\ep}(1)$ and $n-O_C(n^\ep)$ respectively, and $I\cap I_0=\emptyset$, and there exist integers $k, k_{ii_0}\in \Z, i_0 \in I_0, i\in I$, all bounded by $n^{O_{C,\ep}(1)}$, such that the following hold for all $i\in I$: 

\begin{itemize}

\item for any $j\in I$,  

$$a_{ij} = a_{ij}'/k^2 - k\sum_{i_0\in I_0} k_{ii_0} a_{i_0j}/k^2 - k\sum_{i_0\in I_0}k_{ji_0} a_{i_0i}/k^2;$$

\item all but $O_C(n^\ep)$ entries $a'_{ij}$ belong to a proper symmetric GAP $Q_i$ depending on $i$, which has rank $O_{C,\ep}(1)$ and size $n^{O_{C,\ep}(1)}$.
\end{itemize}

\end{theorem}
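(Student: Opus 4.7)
The plan is to reduce Theorem~\ref{theorem:ILO:quadratic:differentGAP} to the bilinear inverse theorem (Theorem~\ref{theorem:ILO:bilinear:differentGAP}) via a Costello--Tao--Vu style decoupling. The starting point is the algebraic identity (valid for symmetric $A$)
\[
f(x)-f(y)\;=\;\sum_{i,j}a_{ij}(x_i-y_i)(x_j+y_j),
\]
which one verifies by expansion and the swap $i\leftrightarrow j$, using $a_{ij}=a_{ji}$. If $\rho_q(A)\ge n^{-C}$, then writing $p_c:=\Pr(f(x)=c)$ gives $\sum_c p_c^2\ge\rho_q(A)^2$, so
\[
\Pr\!\left(\sum_{i,j}a_{ij}u_iv_j=0\right)\;=\;\Pr(f(x)=f(y))\;\ge\;n^{-2C},
\]
where $u_i:=x_i-y_i$, $v_j:=x_j+y_j$ and $y$ is an independent Bernoulli copy of $x$.

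Next I would convert the decoupled identity into a genuine bilinear concentration bound. At each coordinate the pair $(u_i,v_i)$ is uniform on $\{(\pm2,0),(0,\pm2)\}$; in particular the random set $B:=\{i:u_i\neq0\}=\{i:x_i=-y_i\}$ is uniform on subsets of $[n]$, and conditional on $B$ the rescaled variables $u_i/2\ (i\in B)$ and $v_j/2\ (j\notin B)$ are independent Bernoulli $\pm1$ random variables. The bilinear form then collapses to $\sum_{i\in B,\,j\notin B}a_{ij}u_iv_j$, so averaging over $B$ and pigeonholing to a typical $B$ with $|B|=n/2+O(\sqrt n)$ yields a partition $[n]=B\sqcup B^c$ for which the submatrix $(a_{ij})_{i\in B,\,j\in B^c}$ has Bernoulli bilinear concentration probability $\rho_b\ge n^{-2C-o(1)}$.

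Applying Theorem~\ref{theorem:ILO:bilinear:differentGAP} with constant $2C+o(1)$ to this submatrix then produces index sets $I_0\subset B$, $J_0\subset B^c$ of size $O_{C,\ep}(1)$, sets $I\subset B$, $J\subset B^c$ of size $n/2-O_C(n^\ep)$, and integers $k,l,k_{ii_0},l_{j_0j}$ of magnitude $n^{O_{C,\ep}(1)}$ such that for $(i,j)\in I\times J$,
\[
a_{ij}\;=\;\frac{a'_{ij}}{kl}-\frac{1}{k}\sum_{i_0\in I_0}k_{ii_0}a_{i_0j}-\frac{1}{l}\sum_{j_0\in J_0}l_{j_0j}a_{ij_0},
\]
with $a'_{ij}\in Q_i$ outside an exceptional set of $O_C(n^\ep)$ columns. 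The symmetry $a_{ij}=a_{ji}$ lets us rewrite the last sum as $\frac{1}{l}\sum_{j_0\in J_0}l_{j_0j}a_{j_0 i}$; hence identifying $J_0$ with $I_0$ (by replacing both by their disjoint union and zero-padding the coefficients) and letting $l_{j_0 j}$ play the role of $k_{j i_0}$, then clearing denominators so that $k=l$, gives precisely the form $a_{ij}=a'_{ij}/k^2-k\sum k_{ii_0}a_{i_0j}/k^2-k\sum k_{ji_0}a_{i_0 i}/k^2$ required by the theorem, now for $(i,j)\in I\times J$.

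The final step is to extend this decomposition from cross-pairs $(i,j)\in I\times J$ to all pairs within a common large index set. I would do this by rerunning the entire decoupling with a second independent partition $B'$, producing an analogous decomposition $(I',J',I_0',\ldots)$ covering pairs in $B'\times (B')^c$, then intersecting the two large sides and merging the two small exceptional sets. A bounded number of such iterations suffices to cover every pair. I expect the main obstacle to be carrying out this patching cleanly: one must verify that the row-dependent GAPs $Q_i$ produced by different partitions can be merged into a single $Q_i$ per row of rank and size still $O_{C,\ep}(1)$, and that the unification of $I_0$ with $I_0'$ (together with the passage to a common denominator) keeps all integer coefficients bounded by $n^{O_{C,\ep}(1)}$. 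This is the analogue of the patching step in the bilinear case, but complicated here by the correlation between the $u$- and $v$-variables in the decoupling and by the need to exploit the symmetry of $A$ at each stage.
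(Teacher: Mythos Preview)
Your decoupling identity is correct and is essentially the same idea as the paper's Lemma~\ref{lemma:decoupling}; applying the bilinear theorem to a single good partition $B$ is a natural first move. The genuine gap is the patching step, and it is not a technicality.

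When you apply Theorem~\ref{theorem:ILO:bilinear:differentGAP} to the block $(a_{ij})_{i\in B,\,j\in B^c}$, every piece of the output depends on $B$: the exceptional sets $I_0(B),J_0(B)$, the scalars $k(B),l(B)$, and the row-coefficients $k_{ii_0}(B)$. The statement you are proving demands a \emph{single} $I_0$, a single $k$, and for each row $i$ a single tuple $(k_{ii_0})_{i_0\in I_0}$ such that the decomposition is valid for \emph{every} $j\in I$. Two partitions $B,B'$ will hand you two different exceptional sets and two different coefficient tuples for the same row $i$, and there is no way to splice them: taking $I_0\cup J_0\cup I_0'\cup J_0'$ does not help, because the linear combination of exceptional rows that you must subtract from $\row_i$ has genuinely changed, and the GAPs $Q_i(B),Q_i(B')$ need not be commensurable. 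Moreover, your large set $I$ sits inside $B$ and has size only $n/2-O(n^\ep)$, not $n-O(n^\ep)$; covering the pairs in $B\times B$ and $B^c\times B^c$ with a bounded number of further partitions, each carrying its own incompatible structure, does not produce a common decomposition.

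The paper sidesteps this by \emph{not} invoking the bilinear theorem as a black box. Its Fourier/Cauchy--Schwarz decoupling gives $\rho_b^{(1/2)}(A_U)\ge n^{-8C}$ for \emph{every} subset $U$, and it then applies only the intermediate Lemma~\ref{lemma:rowtrullyorthogonal}, whose output is probabilistic rather than structural: for each $U$ and each $i$ one gets bounded integers with
\[
\P_y\Big(k(U)\langle\row_i(A_U),y\rangle+\sum_{i_0\in I_0(U)}k_{ii_0}(U)\langle\row_{i_0}(A_U),y\rangle=0\Big)\ \ge\ n^{-O_{C,\ep}(1)}.
\]
Since the discrete data $(I_0(U),k(U),k_{ii_0}(U))$ take only $n^{O_{C,\ep}(1)}$ values, one pigeonholes over $U$ to freeze them, and then \emph{averages the remaining free $U$'s and applies Cauchy--Schwarz} (the substitution $u_j\mapsto(u_j-u_j')y_j$) to eliminate the $U$-dependence altogether. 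Only after that does one invoke the linear inverse theorem to extract $Q_i$; by then the row relation already holds for all $j$, not merely for $j$ on one side of a partition. Your route commits to the GAP structure too early, before the partition has been averaged out, and that is why the patching cannot be completed.
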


Similar to Theorem \ref{theorem:ILO:bilinear:commonGAP}, we show that the structures $Q_i$ from Theorem \ref{theorem:ILO:quadratic:differentGAP} can be unified into a structure similar to a GAP.

\begin{theorem}[Inverse Littlewood-Offord theorem for quadratic forms, common structure]\label{theorem:ILO:quadratic:commonGAP} Let $\ep<1, C$ be positive constants. Assume that

$$\rho_q(A)\ge n^{-C}.$$ 

Then there exist index sets $I_0, I$ of size $O_{C,\ep}(1)$ and $n-O_C(n^\ep)$ respectively, with $I\cap I_0 = \emptyset$, and there exist integers $k,k_{ii_0},i_0\in I_0,i\in I$, all of size bounded by $n^{O_{C,\ep}(1)}$, such that for all $i\in I$ the following hold:

\begin{itemize}
 
\item for any $j\in I$,

$$a_{ij} = a_{ij}'/k^2 - k\sum_{i_0\in I_0} k_{ii_0} a_{i_0j}/k^2 - k\sum_{i_0\in I_0}k_{ji_0} a_{i_0i}/k^2;$$

\item all but $O_C(n^\ep)$ entries $a_{ij}'$ belong to a set $Q$ (independent of $i$) of the form 

$$Q= \{\sum_{h=1}^{O_C(1)} (p_h/q_h)\cdot g_h; p_h,q_h\in \Z, |p_h|,|q_h|=n^{O_{C,\ep}(1)}\}.$$

\end{itemize}
\end{theorem}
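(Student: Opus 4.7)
The plan is to mirror the proof of Theorem~\ref{theorem:ILO:quadratic:differentGAP}, changing only the decisive step where the bilinear inverse theorem is invoked: use the common-structure version (Theorem~\ref{theorem:ILO:bilinear:commonGAP}) in place of the per-row version (Theorem~\ref{theorem:ILO:bilinear:differentGAP}). The three stages are to decouple the quadratic form into a bilinear one, apply the common-structure bilinear theorem, and then use the matrix symmetry $a_{ij}=a_{ji}$ to collapse the bilinear conclusion into the symmetric quadratic one claimed.

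For the decoupling, I would split $[n]$ into two disjoint halves $I_1,I_2$ of roughly equal size and condition on the variables $(x_j)_{j\in I_2}$. Setting $q(x):=\sum_{i,j}a_{ij}x_ix_j$, the squared-concentration trick $\P(q(x)=t)^2\le\P(q(x)=q(x'))$ followed by averaging over the conditioning (as in the Costello--Tao--Vu proof of Theorem~\ref{theorem:quadratic:LOinequality}) converts the hypothesis $\rho_q(A)\ge n^{-C}$ into a bilinear concentration $\rho_b(\widetilde A)\ge n^{-O_C(1)}$ for the bilinear form $\widetilde A=(2a_{ij})_{i\in I_1,\,j\in I_2}$, holding with substantial probability over the splitting and conditioning. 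Applying Theorem~\ref{theorem:ILO:bilinear:commonGAP} to $\widetilde A$ then supplies a common set $Q=\{\sum_h(p_h/q_h)g_h:\,|p_h|,|q_h|\le n^{O_{C,\ep}(1)}\}$ with $O_{C,\ep}(1)$ generators, exceptional sets $I_0^{(1)}\subset I_1,\,I_0^{(2)}\subset I_2$ of size $O_{C,\ep}(1)$, large index sets $I^{(1)}\subset I_1,\,I^{(2)}\subset I_2$ missing only $O_C(n^\ep)$ indices, and bounded integers $k,l,k_{ii_0},l_{jj_0}$ giving a bilinear decomposition with all but $O_C(n^\ep)$ entries $a'_{ij}$ lying in $Q$. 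To convert this asymmetric output into the symmetric one required by the theorem, I would rerun the same procedure for a handful of additional random splittings (so that every index lies on the ``small'' side for at least one splitting) and glue the outputs: take $I_0$ to be the union of the exceptional sets, $I\subseteq[n]\setminus I_0$ the intersection of the large sets, and use $a_{ij}=a_{ji}$ to force $l=k$ and to identify $l_{j_0 j}$ with $k_{j j_0}$. Under this identification the two bilinear correction terms $\sum_{i_0}k_{ii_0}a_{i_0j}/k$ and $\sum_{j_0}l_{j_0j}a_{ij_0}/l$ coalesce into the symmetric pair $k\sum_{i_0\in I_0}k_{ii_0}a_{i_0j}/k^2$ and $k\sum_{i_0\in I_0}k_{ji_0}a_{i_0i}/k^2$ required by the theorem, with generators from different splittings merged into one bounded list without changing the form of $Q$.

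The hard part will be the decoupling stage: one must arrange that the loss from quadratic to bilinear concentration is only polynomial, and that the concentration is realized on a fixed pair of index sets rather than on a random family parametrized by the conditioning, so that a single ambient $Q$ can be extracted from Theorem~\ref{theorem:ILO:bilinear:commonGAP}. This machinery is exactly what is already in place in the proof of Theorem~\ref{theorem:ILO:quadratic:differentGAP}, so beyond substituting Theorem~\ref{theorem:ILO:bilinear:commonGAP} for Theorem~\ref{theorem:ILO:bilinear:differentGAP} at the critical step, the remaining work (merging exceptional sets and generators across splittings, symmetrizing coefficients) is bookkeeping that follows from the symmetry $a_{ij}=a_{ji}$.
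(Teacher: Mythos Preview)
Your plan mis-reads the proof of Theorem~\ref{theorem:ILO:quadratic:differentGAP}. The paper does \emph{not} prove that theorem by applying Theorem~\ref{theorem:ILO:bilinear:differentGAP} as a black box to a decoupled matrix; instead it applies only the intermediate Lemma~\ref{lemma:rowtrullyorthogonal} to each $A_U$ (for a uniformly random $U\subset[n]$, via the Fourier decoupling of Lemma~\ref{lemma:decoupling}), and then spends most of Section~\ref{section:proof:quadraticILO} pigeonholing over the $2^n$ choices of $U$ and applying Cauchy--Schwarz to extract a \emph{single} $I_0$, a \emph{single} $k$, and coefficients $k_{ii_0}$ that work for the original symmetric matrix $A$. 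This yields Theorem~\ref{theorem:matrixrelation:quadratic} for $A$ directly, and Theorem~\ref{theorem:ILO:quadratic:commonGAP} then follows by one application of Lemma~\ref{lemma:commonGAP} to the row and column GAPs of $A'=RAR^T$. So there is no ``swap Theorem~\ref{theorem:ILO:bilinear:differentGAP} for Theorem~\ref{theorem:ILO:bilinear:commonGAP}'' slot to fill.

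The step you label bookkeeping is where the real difficulty sits. If you apply Theorem~\ref{theorem:ILO:bilinear:commonGAP} to the rectangular block $(a_{ij})_{i\in I_1,\,j\in I_2}$ with $I_1\cap I_2=\emptyset$, the output integers $k,l$, the exceptional sets $I_0^{(1)}\subset I_1$, $I_0^{(2)}\subset I_2$, and the coefficients $k_{ii_0},l_{jj_0}$ are all tied to that particular splitting; a second splitting produces its own data with no a priori consistency with the first. The symmetry $a_{ij}=a_{ji}$ relates an entry to one whose row and column indices lie on the \emph{opposite} sides of the partition, so within a single application it does not force $l=k$ or identify $l_{j_0j}$ with $k_{jj_0}$. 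Reconciling several such decompositions into the single formula $a_{ij}=a'_{ij}/k^2-k\sum_{i_0}k_{ii_0}a_{i_0j}/k^2-k\sum_{i_0}k_{ji_0}a_{i_0i}/k^2$ with one $k$ and one $I_0$ is precisely the content of the pigeonhole and Cauchy--Schwarz argument in Section~\ref{section:proof:quadraticILO}, not a cleanup afterthought.
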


\vskip .1in

\begin{remark} The conclusions of Theorem \ref{theorem:ILO:quadratic:differentGAP} and Theorem \ref{theorem:ILO:quadratic:commonGAP} also hold if we assume that 

$$\rho_q^{(\mu)}(A)\ge n^{-C}.$$ 

We invite the reader to prove this result using the approach presented in Section \ref{section:proof:quadraticILO}.  
\end{remark}

\vskip .1in

\begin{remark}
The inverse Littlewood-Offord problem for quadratic forms was also studied in \cite{C}, but only in the case $\rho_q(A)\ge n^{-1/2+o(1)}$.
\end{remark}

\section{A rank reduction argument and the full rank assumption}\label{appendix:fullrank}
This section, which can be read independently of the rest of this paper, provides a technical lemma we will need for later sections. Informally, it says that if we can find a proper symmetric GAP that contains a given set (in the spirit of Sections \ref{section:linearILO}, \ref{section:bilinearILO} and \ref{section:quadraticILO}), then we can assume this containment is non-degenerate. More details follow.  

Assume that $P=\{m_1g_1+\dots+m_rg_r | -M_i\le m_i \le M_i\}$ is a proper symmetric GAP, which contains a set $U=\{u_1,\dots. u_n\}$. 

We consider $P$ together with the map $\Phi: P \rightarrow \R^r$ which maps $m_1g_1+\dots+m_rg_r$ to $(m_1,\dots,m_r)$. Because $P$ is proper, this map is bijective. 

We know that $P$ contains $U$, but we do not know yet that $U$ is non-degenerate in $P$ in the sense that the set $\Phi(U)$ has full rank in $\R^{r}$. In the later case, we say $U$ {\it spans} P.

\begin{theorem}\label{theorem:fullrank}
Assume that $U$ is a subset of a proper symmetric GAP $P$ of size $r$, then there exists a proper symmetric GAP $Q$ that contains $U$ such that the following hold.

\begin{itemize}
\item $\rank(Q)\le r$ and $|Q|\le O_r(1)|P|$;

\vskip .1in

\item $U$ spans $Q$, that is, $\phi(U)$ has full rank in $\R^{\rank(Q)}$.
\end{itemize}

\end{theorem}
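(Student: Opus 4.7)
The plan is to proceed by induction on $r$. The base case $r=0$ is trivial since a proper symmetric GAP of rank zero is $\{0\}$. For the inductive step, consider the coordinate map $\Phi\colon P\to \Z^r$ sending $\sum m_ig_i$ to $(m_1,\dots,m_r)$, which is injective by properness of $P$. If $\Phi(U)$ already spans $\R^r$, then we set $Q=P$ and are finished. Otherwise $\Phi(U)$ lies in a proper subspace $V\subsetneq \R^r$ of some dimension $r'<r$, and the strategy is to produce a symmetric GAP $P'$ of rank at most $r'$ containing $U$ with $|P'|\le O_r(|P|)$, then apply the inductive hypothesis to a proper cover of $P'$.

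The central step is a change of coordinates adapted to $V$. Put $\Lambda:=V\cap\Z^r$, a lattice of rank $r'$, and $K:=B_P\cap V$ where $B_P:=\prod_i[-M_i,M_i]$; view $K$ as a symmetric convex body in $V$. Since $\Phi(U)\subseteq \Lambda\cap K$ already spans $V$ over $\R$, all successive minima $\lambda_1\le\cdots\le\lambda_{r'}$ of the pair $(\Lambda,K)$ satisfy $\lambda_j\le 1$. Choose a Minkowski-reduced basis $w_1,\dots,w_{r'}$ of $\Lambda$ so that every $x\in\Lambda\cap K$ expands as $x=\sum_j c_jw_j$ with $|c_j|\le C_{r'}/\lambda_j$. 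Setting $g_j':=\sum_i w_{j,i}\, g_i\in\R$ and $N_j:=\lceil C_{r'}/\lambda_j\rceil$, the symmetric GAP $P':=\{\sum_j m_j g_j' : |m_j|\le N_j\}$ contains $U$, has rank at most $r'$, and, by Minkowski's second theorem and a standard lattice-point count,
\[
\Vol(P')=\prod_j(2N_j+1)\le O_r\!\left(\operatorname{vol}_V(K)/\det\Lambda\right)\le O_r(|\Lambda\cap K|)\le O_r(|P|),
\]
using the trivial inclusion $\Lambda\cap K\subseteq \Z^r\cap B_P$ for the final inequality.

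The GAP $P'$ need not be proper, so the inductive hypothesis cannot be applied verbatim. This is handled by invoking a standard proper-cover lemma: any symmetric GAP is contained in a proper symmetric GAP of the same rank whose size is inflated by at most an $O_r(1)$ factor. Replacing $P'$ by such a cover $P^*$, we have $U\subseteq P^*$, $\rank(P^*)\le r'$, and $|P^*|\le O_r(|P|)$. The inductive hypothesis applied to $P^*$ then yields a proper symmetric GAP $Q$ of rank at most $r'<r$ containing $U$, with $|Q|\le O_r(|P|)$ and $U$ spanning $Q$, completing the induction.

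The main obstacle I expect is the geometry-of-numbers step: using the Minkowski-reduced basis to simultaneously bound the coefficients of the elements of $\Phi(U)$ in the $w_j$-basis and control $\Vol(P')$ through Minkowski's second theorem. A secondary technical point is the proper-cover lemma, which, although folklore in additive combinatorics, requires a careful basis choice in the free abelian group $\langle g_1,\dots,g_r\rangle_{\Z}$ in order to keep the volume inflation to an $O_r(1)$ factor.
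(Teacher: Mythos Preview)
Your argument is correct, and both you and the paper invoke the same proper-cover lemma (progressions lie inside proper progressions of the same rank with only $O_r(1)$ inflation) to restore properness after dropping rank. The difference is in how the rank is reduced. You pass all the way down to the span $V$ of $\Phi(U)$ in one shot, using geometry of numbers: successive minima of $(\Lambda,K)$, a reduced basis of $\Lambda$, Minkowski's second theorem, and a lattice-point count to control $\Vol(P')$. The paper instead reduces the rank by exactly one at each step via an elementary substitution: if $\sum_i \alpha_i m_i = 0$ on $\Phi(U)$ with $\alpha_r\neq 0$, set $w=g_r/\alpha_r$ and $g_i'=g_i-\alpha_i w$ for $i<r$; then $U$ is contained in the rank-$(r-1)$ symmetric GAP on generators $g_1',\dots,g_{r-1}'$ with the \emph{same} side lengths $M_1,\dots,M_{r-1}$, so its volume is at most that of $P$ even before invoking the proper-cover lemma. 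Iterating at most $r$ times finishes. The paper's route is thus shorter and avoids Minkowski's theorem entirely; the only source of the $O_r(1)$ loss is the repeated proper-cover step. Your approach is a valid alternative and would be the natural one if you needed finer control on the shape of $Q$ relative to the geometry of $K$, but for the bare statement here it is heavier than necessary.
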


To prove Theorem \ref{theorem:fullrank}, we will rely on the following lemma.

\begin{lemma}[Progressions lie inside proper progressions]\cite[Chapter 3.]{TVbook}\label{lemma:embeding}
There is an absolute constant $C$ such that the following holds. Let $P$ be a GAP of rank $r$ in $\R$. Then there is a symmetric proper GAP $Q$ of rank at most $r$ containng $P$ and 

$$|Q|\le r^{Cr^3}|P|.$$ 

\end{lemma}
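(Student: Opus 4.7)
The plan is to prove the lemma by induction on the rank $r$, using a Minkowski-type geometry-of-numbers argument to handle the essential case where the presenting map fails to be injective. The base case $r = 1$ is essentially immediate: a rank-one GAP is already proper, and symmetrization (shifting so the center is at $0$, then replacing $\{m g : N \le m \le N'\}$ by $\{m g : |m| \le N' - N\}$) costs only a constant factor in size. For the inductive step I would first reduce to the symmetric case by a similar shift-and-symmetrize maneuver at a cost of a factor $2^r$ in the size, so assume $P = \{\sum_i m_i g_i : |m_i| \le N_i\}$ is symmetric and consider the presenting map $\Phi : \Z^r \to \R$, $(m_1,\ldots,m_r) \mapsto \sum_i m_i g_i$.

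If $\Phi$ is injective on the box $B = \{|m_i| \le N_i\}$, then $P$ is already proper and we are done. Otherwise, the kernel $\Lambda := \ker(\Phi) \cap \Z^r$ is a nonzero sublattice of $\Z^r$, say of rank $s \ge 1$. Apply Minkowski's second theorem to $\Lambda$ (viewed inside $\R^r$ with the standard norm) to find successive minima $\lambda_1 \le \cdots \le \lambda_s$ with $\prod_i \lambda_i \ll_r \det(\Lambda)$, then pass from successive minima to an honest basis $v_1, \ldots, v_s$ of $\Lambda$ with $\|v_i\| \le i \cdot \lambda_i$, and extend to a basis $v_1, \ldots, v_r$ of $\Z^r$ via Smith/Hermite normal form. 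In the new coordinates the first $s$ directions are redundant because $\Phi(v_j) = 0$ for $j \le s$, so $P$ is expressible as the image of a box in the remaining $r - s$ coordinates under $(n_{s+1},\ldots,n_r) \mapsto \sum_{j>s} n_j \Phi(v_j)$. The side lengths of this new box are controlled by how $B$ transforms under the basis change: because the $v_i$ are short, one obtains a lower-rank GAP $P'$ of rank at most $r - s$ containing $P$ with $|P'| \le r^{O(r^2)} |P|$. Applying the inductive hypothesis to $P'$ yields a proper symmetric GAP $Q$ of rank at most $r - s$ containing $P$ with $|Q| \le (r-s)^{C(r-s)^3} \cdot r^{O(r^2)} |P|$.

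The main obstacle is closing the induction under the specific exponent $Cr^3$. Expanding $(r-s)^{C(r-s)^3} \le r^{C(r-1)^3} = r^{Cr^3 - 3Cr^2 + O(Cr)}$ shows that the extra factor $r^{O(r^2)}$ incurred at each step is absorbed once $C$ is taken large enough, so the induction does close. The real work of the argument sits in the Minkowski estimate controlling the side lengths after the change of basis — both in producing the basis $v_1,\ldots,v_s$ from successive minima without losing too much and in bounding how the original box deforms — and these are exactly the places where one has to check that the polynomial-in-$r$ loss does not inflate to something worse. A secondary subtlety is verifying that the lower-rank GAP $P'$ can be taken symmetric after the basis change, but this is absorbed by the $2^r$ we already paid for symmetrization at the start of each inductive step.
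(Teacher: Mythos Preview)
The paper does not give its own proof of this lemma; it is quoted as a known result from Chapter~3 of Tao--Vu's \emph{Additive Combinatorics} and used as a black box inside the proof of Theorem~\ref{theorem:fullrank}. So there is no in-paper argument to compare against.

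That said, your plan matches the standard route to this result. The induction-on-rank strategy---symmetrize, detect a nontrivial kernel when the presenting map fails to be injective, use geometry of numbers to find a short basis for that kernel, then quotient to drop the rank---is exactly how the Tao--Vu proof is organized, and your bookkeeping showing that a per-step loss of $r^{O(r^2)}$ telescopes under induction into the stated $r^{Cr^3}$ is correct. Two remarks on the details, since you flagged them yourself as the places needing care:

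First, Minkowski's second theorem should be applied with the norm whose unit ball is the box $B$ (equivalently, first rescale the coordinates so that $B$ becomes the standard cube), not with the raw Euclidean norm on $\Z^r$. Otherwise the successive minima of $\Lambda$ need not detect the short kernel vectors that actually cause the non-properness of $P$.

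Second, the assertion that the basis change yields a lower-rank GAP $P'$ with $|P'| \le r^{O(r^2)} |P|$ is the whole content of the lemma, and it does not follow merely from ``the $v_i$ are short.'' A naive elimination of one coordinate using a single kernel vector $v \in 2B \cap \Lambda$ produces a rank-$(r-1)$ GAP whose volume picks up a factor involving the side lengths $N_i$, not just $r$. The point is that one must compare the volume of the projected box to $|P|$ rather than to $\Vol(P)$: the covolume of $\Lambda$ in the $B$-norm governs the ratio $\Vol(P)/|P|$, and this is precisely what cancels the blow-up in the projection. Your sketch names this step as ``the real work'' but does not yet indicate how that cancellation is arranged, and this is exactly the place a reader would want to see spelled out.
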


\begin{proof} (of Theorem \ref{theorem:fullrank}) We shall mainly follow \cite[Section 8]{TVsing}.

Suppose that $\Phi(U)$ does not have full rank, then it is contained in a hyperplane of $\R^r$. In other words, there exist integers $\alpha_1,\dots,\alpha_r$ whose common divisor is one and $\alpha_1m_1+\dots + \alpha_r m_r=0$ for all $(m_1,\dots,m_r)\in \Phi(U)$.

Without loss of generality, we assume that $\alpha_r \neq 0$. We select $w$ so that $g_r=\alpha_r w$, and consider $P'$ be the GAP generated by $g_i':=g_i-\alpha_iw$ for $1\le i \le r-1$. The new symmetric GAP $P'$ will continue to contain $U$, because we have 

\begin{align*}
m_1g_1'+\dots +m_{r-1}g_{r-1}' &= m_1g_1+\dots+m_rg_r - w(\alpha_1m_1+\dots+\alpha_rg_r)\\
&= m_1g_1+\dots+m_rg_r
\end{align*}

for all $(m_1,\dots,m_r)\in \Phi(U)$. 

Also, note that the volume of $P'$ is $2^{r-1}M_1\dots M_{r-1}$, which is less than the volume of $P$.

We next use Lemma \ref{lemma:embeding} to guarantee that $P'$ is symmetric and proper without increasing the rank. 
 
Iterate the process if needed. Because we obtain a new proper symmetric GAP whose rank strictly decreases each step, the process must terminate after at most $r$ steps.
 
\end{proof}

\section{proof of Theorem \ref{theorem:ILO:bilinear:differentGAP} , Theorem \ref{theorem:ILO:bilinear:commonGAP}, and Theorem \ref{theorem:ILO:bilinear:general} }\label{section:proof:bilinearILO}

We begin by applying Theorem \ref{theorem:ILO:general}.

\begin{lemma}\label{lemma:roworthogonal} 
Let $\ep<1$, $0< \mu \le 1$, and $C$ be positive constants. Assume that $\rho_b^{(\mu)}(A)\ge n^{-C}$. Then the following holds with probability at least $\frac{3}{4}{n^{-C}}$ with respect to $y=(y_1,\dots,y_n)$. There exist a proper symmetric GAP $Q_y$ of rank $O_{C,\ep,\mu}(1)$ and size $O_{C,\ep,\mu}(1/\rho_b^{(\mu)})$ and a set $I_y$ of $n-n^\ep$ indices such that for each $i\in I_y$ we have

$$\langle \row_i, y \rangle \in Q_y.$$
\end{lemma}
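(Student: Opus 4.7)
My plan is to reduce the bilinear concentration to a linear concentration by conditioning on $y$, and then invoke the inverse Littlewood--Offord theorem for linear forms with $\eta^\mu$ weights (Theorem \ref{theorem:ILO:general}) on the good event.

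First, I would rewrite the bilinear form as $\sum_{i,j} a_{ij} x_i y_j = \sum_i x_i \langle \row_i(A), y\rangle$, so that for each fixed $y$ it becomes a linear form in $x$ with coefficients $c_i(y) := \langle \row_i(A), y\rangle$. Setting
$$\rho(y) := \sup_{a\in\R} \P_x\Bigl(\sum_i c_i(y)\, x_i = a\Bigr),$$
Fubini's theorem gives $\E_y\, \rho(y) \ge \rho_b^{(\mu)}(A) \ge n^{-C}$.

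Next, since $\rho(y)\in[0,1]$, a first-moment argument produces the required measure of good $y$. From
$$\rho_b^{(\mu)}(A) \le \E_y\, \rho(y) \le \tfrac{1}{4}\rho_b^{(\mu)}(A) + \P_y\bigl(\rho(y) > \tfrac{1}{4}\rho_b^{(\mu)}(A)\bigr),$$
I would conclude $\P_y\bigl(\rho(y) \ge \tfrac{1}{4}\rho_b^{(\mu)}(A)\bigr) \ge \tfrac{3}{4}\rho_b^{(\mu)}(A) \ge \tfrac{3}{4} n^{-C}$. Call any such $y$ \emph{good}.

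Finally, for a good $y$, I would apply Theorem \ref{theorem:ILO:general} to the multiset $\{c_i(y)\}_{i=1}^n$ with threshold $n' = n^\ep$. Since the concentration probability of this linear form (in the $\eta^\mu$ variables $x_i$) is at least $\tfrac{1}{4}\rho_b^{(\mu)}(A)$, the theorem yields a proper symmetric GAP $Q_y$ of rank $r = O_{C,\ep,\mu}(1)$ that contains all but at most $n^\ep$ of the $c_i(y)$, with
$$|Q_y| = O_{C,\ep,\mu}\bigl(\rho(y)^{-1} (n^\ep)^{-r/2}\bigr) = O_{C,\ep,\mu}\bigl(1/\rho_b^{(\mu)}(A)\bigr).$$
Taking $I_y := \{i : c_i(y) \in Q_y\}$ gives $|I_y| \ge n - n^\ep$, which is the desired conclusion.

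There is no substantive obstacle here; the lemma is essentially a direct combination of Fubini, a Markov-type averaging, and the already-quoted inverse theorem for linear forms with Bernoulli-$\mu$ weights. The only point requiring mild care is using the threshold $\tfrac{1}{4}\rho_b^{(\mu)}(A)$ in the averaging step rather than the cruder $\tfrac{1}{4} n^{-C}$, so that the size bound on $Q_y$ comes out proportional to $1/\rho_b^{(\mu)}(A)$ as stated, rather than the weaker $O(n^C)$.
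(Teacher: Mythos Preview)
Your proposal is correct and follows essentially the same approach as the paper: rewrite the bilinear form as a linear form in $x$ with coefficients $\langle \row_i(A),y\rangle$, use an averaging argument to show that with probability at least $\tfrac{3}{4}\rho_b^{(\mu)}(A)$ over $y$ the linear concentration is at least $\tfrac{1}{4}\rho_b^{(\mu)}(A)$, and then apply Theorem~\ref{theorem:ILO:general} with $n'=n^\ep$. Your write-up of the averaging step (via $\E_y\rho(y)\le t+\P_y(\rho(y)>t)$) is in fact slightly cleaner than the paper's, but the content is identical.
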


\begin{proof}(of Lemma \ref{lemma:roworthogonal})
For short we write

$$\sum_{i,j} a_{ij}x_iy_j = \sum_{i=1}^n x_i \langle \row_i,y \rangle .$$

We say that a vector $y=(y_1,\dots,y_n)$ is {\it good} if

$$\P_x(\sum_{i=1}^n x_i\langle \row_i,y \rangle =a)\ge \rho_b^{(\mu)}/4.$$
 
We call $y$ {\it bad} otherwise.

First, we estimate the probability $p$ of a randomly chosen vector $y=(y_1,\dots,y_n)$ being bad by an averaging method.

\begin{align*}
\P_y \P_x \sum_{i=1}^n \langle \row_i,y \rangle &=\rho_b^{(\mu)}\\
p \rho_b^{(\mu)}/4 + 1-p &\ge \rho_b^{(\mu)}.\\
(1-\rho_b^{(\mu)})/(1-\rho_b^{(\mu)}/4) &\ge p.
\end{align*}

Thus, the probability of a randomly chosen vector being good is at least 

$$1-p \ge (3\rho_b^{(\mu)}/4)/(1-\rho_b^{(\mu)}/4) \ge 3\rho_b^{(\mu)}/4.$$

Next, we consider a good vector $y\in G$. By definition, we have

$$\P_x (\sum_{i=1}^n x_i \langle \row_i,y \rangle =a ) \ge \rho_b^{(\mu)}/4 .$$

A direct application of Theorem \ref{theorem:ILO:general} to the sequence $\langle \row_i,y \rangle$, $i=1,\dots,n$ yields the desired result.
\end{proof}

By Theorem \ref{theorem:fullrank}, we may assume that the $\langle \row_i,y \rangle$ span $Q_y$. From now on we fix such a $Q_y$ for each $y$. 

Let $G$ be the collection of good vectors. Thus, 

\begin{equation}
\P_y(y\in G)\ge 3\rho_b^{(\mu)}/4.
\end{equation}

Next, for each $y\in G$, we choose from $I_y$ $s$ indices $i_{y_1},\dots,i_{y_s}$ such that $\langle \row_{i_{y_j}},y\rangle $ span $Q_y$, where $s$ is the rank of $Q_y$. We note that $s=O_{C,\ep,\mu}(1)$ for all $s$. 

Consider the tuples $(i_{y_1},\dots,i_{y_s})$ for all $y\in G$. Because there are $\sum_{s} O_{C,\ep,\mu}(n^s) = n^{O_{C,\ep,\mu}(1)}$ possibilities these tuples can take, there exists a tuple, say $(1,\dots,r)$ (by rearranging the rows of $A$ if needed, we may assume so), such that $(i_{y_1},\dots,i_{y_s})=(1,\dots,r)$ for all $y\in G'$, a subset of $G$ satisfying 

\begin{equation}
\P_y(y\in G')\ge \P_y(y\in G)/n^{O_{C,\ep,\mu}(1)} =\rho_b^{(\mu)}(A)/n^{O_{C,\ep,\mu}(1)}.
\end{equation}

For each $1\le i\le r$, we express $\langle \row_i,y \rangle$ in terms of the generators of $Q_y$ for each $y\in G'$, 

$$\langle \row_{i},y\rangle = c_{i1}(y)g_{1}(y)+\dots + c_{ir}(y)g_{r}(y),$$ 

where $c_{i1}(y),\dots c_{ir}(y)$ are integers bounded by $n^{O_{C,\ep,\mu}(1)}$, and $g_{i}(y)$ are the generators of $Q_y$.

We will show that there are many $y$ that correspond to the same coefficients $c_{ij}$. 

Consider the collection of the coefficient-tuples $\Big(\big(c_{11}(y),\dots,c_{1r}(y)\big);\dots; \big(c_{r1}(y),\dots c_{rr}(y)\big)\Big)$ for all $y\in G'$. Because the number of possibilities these tuples can take is at most

$$(n^{O_{C,\ep,\mu}(1)})^{r^2} =n^{O_{C,\ep,\mu}(1)}.$$

There exists a coefficient-tuple, say  $\Big((c_{11},\dots,c_{1r}),\dots, (c_{r1},\dots c_{rr})\Big)$, such that  

$$\Big(\big(c_{11}(y),\dots,c_{1r}(y)\big);\dots; \big(c_{r1}(y),\dots c_{rr}(y)\big)\Big) =\Big((c_{11},\dots,c_{1r}),\dots, (c_{r1},\dots c_{rr})\Big)$$  

for all $y\in G''$, a subset of $G'$ satisfying 

\begin{equation}
\P_y(y\in G'')\ge \P_y(y\in G')/n^{O_{C,\ep,\mu}(1)} \ge \rho_b^{\mu}(A)/n^{O_{C,\ep,\mu}(1)}.
\end{equation}

In summary, there exist $r$ tuples  $(c_{11},\dots,c_{1r}),\dots, (c_{r1},\dots c_{rr})$, whose components are integers bounded by $n^{O_{C,\ep,\mu}(1)}$, such that the following hold for all $y\in G''$.

\begin{itemize}

\item $\langle \row_{i},y\rangle = c_{i1}g_{1}(y)+\dots + c_{jr}g_{r}(y)$, for $i=1,\dots,r$.

\vskip .1in

\item The vectors  $(c_{11},\dots,c_{1r}),\dots, (c_{r1},\dots c_{rr})$ span $\Z^{\rank(Q_y)}$.

\end{itemize}

Next, because $|I_y|\ge n-n^\ep$ for each $y\in G''$, there is a set $I$ of size $n-3n^\ep$ such that $I \cap \{1,\dots,r\} =\emptyset$ and for each $i\in I$ we have 

\begin{equation}\label{eqn:optional}
\P_y(i\in I_y, y\in G'') \ge \P_y(y\in G'')/2.
\end{equation}  

Indeed, let $I'$ be the set of $i$ satisfying \eqref{eqn:optional}. Then, as 

$$\sum_{i}\sum_{y\in G'', i\in I_y} 1 = \sum_{y\in G''}\sum_{i\in I_y} 1 \ge (n-n^\ep)|G''|,$$ 

we have $\sum_{i\in I'}|G''|+\sum_{i\notin I'} |G''|/2 \ge (n-n^\ep)|G''|$. Hence, 

$$|I'||G''|+ (n-|I'|)|G''|/2 \ge (n-n^\ep)|G''|,$$ 

from which we deduce that $|I'|\ge n-2n^\ep$. To obtain $I$ we just remove the elements of $\{1,\dots,r\}$ from $I'$.

Now fix an arbitrary row $\row$ of index from $I$. We concentrate on those $y\in G''$ where the index of $\row$ belongs to $I_y$. 

Because $\langle \row,y\rangle \in Q_y$, we can write 

$$\langle \row, y \rangle = c_{1}(y)g_{1}(y)+\dots c_{r}(y)g_{r}(y)$$ 

where $c_{i}(y)$ are integers bounded by $n^{O_{C,\ep,\mu}(1)}$.

For short, we denote the vector $(c_{i1},\dots,c_{ir})$ by $\mathbf{v}_i$ for each $i$. We will also denote the vector $(c_{1}(y),\dots c_{r}(y))$ by $\mathbf{v}_{\row,y}$. 

Because $Q_i$ is spanned by $\langle \row_{1},y \rangle ,\dots, \langle \row_{r},y \rangle$, we have $k=\det(\mathbf{v}_1,\dots \mathbf{v}_r)\neq 0$, and that 

$$k \langle \row,y \rangle + \det(\mathbf{v}_{\row,y},\mathbf{v}_2,\dots,\mathbf{v}_r)\langle \row_{1},y \rangle +\dots + \det(\mathbf{v}_{\row,y},\mathbf{v}_1,\dots,\mathbf{v}_{r-1})\langle \row_{r},y \rangle =0.$$ 

Next, because each coefficient of the identity above is bounded by $n^{O_{C,\ep,\mu}(1)}$, there exists a subset $G_{\row}''$ of $G''$ such that all $y\in G_{\row}''$ correspond to the same identity, and

\begin{equation}
\P_y(y\in G_{\row}'') \ge (\P_y(y\in G'')/2)/(n^{O_{C,\ep,\mu}(1)})^r = \rho_b^{(\mu)}/n^{O_{C,\ep,\mu}(1)}.
\end{equation}

In other words, there exist integers $k_1,\dots,k_r$, all bounded by $n^{O_{C,\ep,\mu}(1)}$, such that 

$$k \langle \row,y \rangle + k_1 \langle \row_{1},y \rangle + \dots + k_r \langle \row_{r},y \rangle=0$$

for all $y\in G_{\row}''$. 

Note that $k$ is independent of $\row$ and $y$. We thus conclude below.

\begin{lemma}[The rows are mutually orthogonal to many $\{-1,0,1\}$ vectors]\label{lemma:rowtrullyorthogonal}
Let $i$ be any index of $I$. Then there are numbers $k_{i1},\dots, k_{ir}\in \Z$, all bounded by $n^{O_{C,\ep,\mu}(1)}$, such that 

$$\P_y\big(k \langle \row_i,y \rangle + \sum_{j=1}^{r} k_{ij} \langle \row_{j},y\rangle =0\big)=\rho_b^{(\mu)}/n^{O_{C,\ep,\mu}(1)}.$$
\end{lemma}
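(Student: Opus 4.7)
The plan is to apply Theorem~\ref{theorem:ILO:general} (the inverse Littlewood-Offord theorem for linear forms in the $\eta^\mu$ setting) to the linear form $\sum_{i} x_i \langle \row_i, y\rangle$ for many well-chosen $y$. Call $y$ \emph{good} if $\sup_a \P_x\bigl(\sum_i x_i \langle \row_i, y\rangle = a\bigr) \ge \rho_b^{(\mu)}(A)/4$; a one-line Markov/averaging argument shows that at least a $3\rho_b^{(\mu)}(A)/4$ fraction of $y$'s are good, which is exactly the content of Lemma~\ref{lemma:roworthogonal}. For each good $y$, Theorem~\ref{theorem:ILO:general} yields a proper symmetric GAP $Q_y$ of rank $s=O_{C,\ep,\mu}(1)$ and size $O_{C,\ep,\mu}(1/\rho_b^{(\mu)}(A))$ containing all but $n^\ep$ of the values $\langle \row_i, y\rangle$, and by Theorem~\ref{theorem:fullrank} we may assume those values actually span $Q_y$.

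Next I run a cascade of pigeonholes on the $y$'s to fix a common algebraic picture. First, select $s$ row indices $(i_{y_1},\dots,i_{y_s})$ whose inner products form a $\Z$-basis of $Q_y$. Since $s=O(1)$ there are only $n^{O(1)}$ possible tuples, so after relabelling I pass to a subset $G'$ on which this basis is indexed by $(1,\dots,r)$. Second, expand $\langle \row_j, y\rangle = \sum_{\ell=1}^r c_{j\ell}(y)\, g_\ell(y)$ for $1\le j\le r$. The matrix $(c_{j\ell}(y))$ is $r\times r$ with integer entries of size $n^{O(1)}$, so another pigeonhole yields $G''\subset G'$ of density $\rho_b^{(\mu)}(A)/n^{O(1)}$ on which this matrix is a fixed integer matrix whose rows are vectors $\Bv_1,\dots,\Bv_r\in\Z^r$ with nonzero determinant $k:=\det(\Bv_1,\dots,\Bv_r)$. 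Crucially, $k$ is determined at this stage and depends only on the fixed rows $\row_1,\dots,\row_r$.

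Now fix a row index $i\in I$, where $I$ is the set of indices that additionally lie in $I_y$ for at least half of $y\in G''$; a direct double-counting (as carried out in the text just before the lemma) shows $|I|\ge n-3n^\ep$ after removing $\{1,\dots,r\}$. For each such $y$, write $\langle \row_i,y\rangle = \sum_\ell c_\ell(y)g_\ell(y)$ and set $\Bv_{i,y}:=(c_1(y),\dots,c_r(y))\in\Z^r$. The $r+1$ vectors $\Bv_{i,y},\Bv_1,\dots,\Bv_r$ are linearly dependent in $\R^r$, and Cramer's rule gives an integer identity
\[
k\,\langle \row_i,y\rangle \;+\; \sum_{j=1}^r k_{ij}(y)\,\langle \row_j,y\rangle \;=\; 0,
\]
with each $k_{ij}(y)$ a cofactor of size $n^{O(1)}$. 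A final pigeonhole over the tuple $(k_{i1}(y),\dots,k_{ir}(y))$, which has only $n^{O(1)}$ possible values, produces a subset $G''_{\row_i}\subset G''$ of density $\rho_b^{(\mu)}(A)/n^{O(1)}$ on which these coefficients are fixed integers $k_{i1},\dots,k_{ir}$. These are the integers claimed by the lemma.

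The main obstacle is the bookkeeping of the four-stage pigeonhole: one must order the steps so that the determinant $k$ is fixed once and for all by the chosen basis rows $\row_1,\dots,\row_r$ and does \emph{not} depend on the row $\row_i$ under analysis, while still ensuring that each stage loses at most a factor $n^{O_{C,\ep,\mu}(1)}$ in the probability of the surviving set of $y$'s. Getting the Cramer step \emph{after} the fixing of the basis matrix is the key structural point; everything else is a routine averaging argument.
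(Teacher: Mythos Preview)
Your proposal is correct and follows the paper's own argument essentially step for step: Lemma~\ref{lemma:roworthogonal} for the good set, Theorem~\ref{theorem:fullrank} to get a spanning basis, a pigeonhole to fix the basis indices $(1,\dots,r)$, a second pigeonhole to freeze the integer coefficient matrix (hence $k$), the double-count to produce $I$, Cramer's rule for the integer relation, and a final pigeonhole to freeze $(k_{i1},\dots,k_{ir})$. Your explicit emphasis that $k$ is fixed \emph{before} the row $\row_i$ is chosen is exactly the structural point the paper relies on.
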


Putting Lemma \ref{lemma:roworthogonal}, Lemma \ref{lemma:rowtrullyorthogonal}, and Theorem \ref{theorem:ILO:general} together, we obtain the following result.

\begin{theorem}[Refined row relation]\label{theorem:rowrelation} Let $0<\ep \le 1$, $0<\mu \le 1$, and $C$ be positive constants.
Assume that $\rho_b^{(\mu)}(A)\ge n^{-C}$. Then there exist a set $I_0$ of size $O_{C,\ep,\mu}(1)$, a set $I$ of size $n-3n^\ep$ with $I\cap I_0=\emptyset$, and there exists a nonzero integer $k$ of size $n^{O_{C,\ep,\mu}(1)}$ such that the following holds for all $i\in I$: there exists a proper symmetric GAP $Q_i$ of rank $O_{C,\ep,\mu}(1)$ and size $n^{O_{C,\ep,\mu}(1)}$, an index set $J_i$ of size $n-n^\ep$, and integers $k_{ii_0},i_0\in I_0$, all bounded by $n^{O_{C,\ep,\mu}(1)}$, such that the following holds for all $j\in J_i$

$$\sum_{i_0\in I_0} k_{ii_0} a_{i_0j} + k a_{ij}\in Q_i.$$

\end{theorem}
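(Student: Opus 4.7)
\emph{Proof proposal.} The plan is to combine Lemma \ref{lemma:rowtrullyorthogonal} with a row-by-row application of the inverse Littlewood-Offord theorem (Theorem \ref{theorem:ILO:general}). All the heavy lifting, namely selecting a common spanning tuple $I_0=\{1,\ldots,r\}$, a common coefficient pattern, a common large subset $G''$ of good $y$'s, and a universal integer $k$ independent of $i$, has already been carried out en route to Lemma \ref{lemma:rowtrullyorthogonal}. What remains is a routine translation of the statement ``many $y$'s annihilate a fixed linear combination of rows'' into a structural statement on the entries $a_{ij}$ themselves.

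Fix $i\in I$ and let $k, k_{i1},\ldots,k_{ir}$ be the integers supplied by Lemma \ref{lemma:rowtrullyorthogonal}. Define
$$b_j^{(i)} := k\,a_{ij} + \sum_{i_0\in I_0} k_{ii_0}\, a_{i_0 j}, \qquad j=1,\ldots,n.$$
Expanding $\langle \row_i,y\rangle=\sum_j a_{ij}y_j$, the identity
$$k\langle \row_i,y\rangle + \sum_{i_0\in I_0} k_{ii_0}\langle \row_{i_0},y\rangle \;=\; \sum_{j=1}^n b_j^{(i)}\, y_j$$
rewrites Lemma \ref{lemma:rowtrullyorthogonal} as
$$\P_y\!\left(\sum_{j=1}^n b_j^{(i)}\, y_j = 0\right) \;\ge\; \frac{\rho_b^{(\mu)}(A)}{n^{O_{C,\ep,\mu}(1)}} \;\ge\; n^{-C'},$$
with $C' := C+O_{C,\ep,\mu}(1)$. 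Thus the linear form $\sum_j b_j^{(i)} y_j$ in iid $\eta^\mu$ variables has concentration probability at least $n^{-C'}$.

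Now I would apply Theorem \ref{theorem:ILO:general} to the multiset $\{b_j^{(i)}\}_{j=1}^n$ at threshold $n'=n^\ep$. This yields a proper symmetric GAP $Q_i$ of rank $r_i=O_{C,\ep,\mu}(1)$ and size $O_{C,\ep,\mu}\!\left(\rho^{-1}(n^\ep)^{-r_i/2}\right) \le n^{O_{C,\ep,\mu}(1)}$ that contains all but at most $n^\ep$ of the values $b_j^{(i)}$. Letting $J_i$ be the set of remaining indices, we have $|J_i|\ge n-n^\ep$ and, for every $j\in J_i$,
$$k\,a_{ij}+\sum_{i_0\in I_0}k_{ii_0}\, a_{i_0 j} \;=\; b_j^{(i)}\;\in\; Q_i,$$
which is exactly the claimed conclusion. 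Repeating for each $i\in I$ (with $k$ and $I_0$ held fixed) completes the argument.

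I do not expect a substantive obstacle. The one point worth flagging is parameter bookkeeping: the GAPs $Q_i$, the exceptional sets $J_i^c$, and the coefficients $k_{ii_0}$ are all allowed to depend on $i$, but the integer $k$ and the small index set $I_0$ are required to be common. This uniformity is precisely what was engineered, via successive pigeonhole refinements $G\supseteq G'\supseteq G''$ and the determinant identity, inside the proof of Lemma \ref{lemma:rowtrullyorthogonal}; once that lemma is granted, the final invocation of Theorem \ref{theorem:ILO:general} is immediate.
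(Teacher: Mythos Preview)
Your proposal is correct and matches the paper's own argument: the theorem is stated immediately after Lemma~\ref{lemma:rowtrullyorthogonal} with the one-line justification ``Putting Lemma~\ref{lemma:roworthogonal}, Lemma~\ref{lemma:rowtrullyorthogonal}, and Theorem~\ref{theorem:ILO:general} together,'' which is precisely the combination you spell out. Your emphasis that $k$ and $I_0$ are common while $Q_i$, $J_i$, and the $k_{ii_0}$ may vary with $i$ is exactly the point, and your expansion $\sum_j b_j^{(i)} y_j$ followed by an application of Theorem~\ref{theorem:ILO:general} at threshold $n'=n^\ep$ is the intended step.
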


Because the role of rows and columns of $A$ can be swapped, we obtain a similar conclusion for the columns of $A$.

\begin{theorem}[Refined column relation]\label{theorem:columnrelation} Let $0<\ep\le 1, 0<\mu\le 1$, and $C$ be positive constants.
Assume that $\rho_b^{(\mu)}(A)\ge n^{-C}$.  
Then there exist a set $J_0$ of size $O_{C,\ep,\mu}(1)$, a set $J$ of size $n-3n^\ep$ with $J\cap J_0=\emptyset$, and there exists a nonzero integer $l$ of size $n^{O_{C,\ep,\mu}(1)}$ such that the following holds for all $j\in J$: there exists a proper symmetric GAP $P_j$ of rank $O_{C,\ep,\mu}(1)$ and size $n^{O_{C,\ep,\mu}(1)}$, an index set $I_j$ of size $n-n^\ep$, and integers $l_{j_0j},j_0\in J_0$, all bounded by $n^{O_{C,\ep,\mu}(1)}$, such that the following holds for all $i\in I_j$

$$\sum_{j_0\in J_0} l_{j_0j} a_{ij_0}+ l a_{ij}\in P_j.$$

\end{theorem}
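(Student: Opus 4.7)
The plan is to deduce Theorem \ref{theorem:columnrelation} from Theorem \ref{theorem:rowrelation} by exploiting the symmetry between rows and columns of the bilinear form, as the author's one-line remark suggests. The key observation is that for $B := A^T$ we have
$$\sum_{i,j} a_{ij} x_i y_j \;=\; \sum_{i,j} B_{ji} x_i y_j \;=\; \sum_{j,i} B_{ji} y_j x_i,$$
and since $(x_1,\dots,x_n)$ and $(y_1,\dots,y_n)$ are independent and identically distributed (each coordinate an iid copy of $\eta^\mu$), the joint distribution of the pair $(x,y)$ equals that of $(y,x)$. Consequently the bilinear concentration probability is unchanged under transposition:
$$\rho_b^{(\mu)}(B) \;=\; \rho_b^{(\mu)}(A) \;\ge\; n^{-C}.$$

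Next I would simply invoke Theorem \ref{theorem:rowrelation} with the matrix $B$ in place of $A$. That theorem produces an index set $J_0$ of size $O_{C,\ep,\mu}(1)$ (an index set of \emph{rows} of $B$, i.e.\ of \emph{columns} of $A$), an index set $J$ of size $n-3n^\ep$ disjoint from $J_0$, and a nonzero integer $l$ of size $n^{O_{C,\ep,\mu}(1)}$. For each $j \in J$ we obtain a proper symmetric GAP $P_j$ of rank $O_{C,\ep,\mu}(1)$ and size $n^{O_{C,\ep,\mu}(1)}$, an index set $I_j$ of \emph{columns of $B$} (= rows of $A$) of size $n-n^\ep$, and integers $l_{j_0 j}$ for $j_0 \in J_0$, all bounded by $n^{O_{C,\ep,\mu}(1)}$, such that for every $i \in I_j$
$$\sum_{j_0 \in J_0} l_{j_0 j}\, B_{j_0 i} \;+\; l\, B_{j i} \;\in\; P_j.$$

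Finally, translating back via $B_{j_0 i} = a_{i j_0}$ and $B_{j i} = a_{ij}$, this identity becomes
$$\sum_{j_0 \in J_0} l_{j_0 j}\, a_{i j_0} \;+\; l\, a_{ij} \;\in\; P_j,$$
which is exactly the conclusion claimed in Theorem \ref{theorem:columnrelation}. There is no substantive obstacle here: the work has already been done in the proof of Theorem \ref{theorem:rowrelation}, and the only thing to verify is the invariance of $\rho_b^{(\mu)}$ under transposition, which follows immediately from the fact that the two coordinate vectors entering the bilinear form have the same distribution and are independent. Thus the proof reduces to applying the row-version theorem to $A^T$ and relabeling the index sets accordingly.
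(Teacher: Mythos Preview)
Your proposal is correct and matches the paper's own approach: the paper simply states that ``the role of rows and columns of $A$ can be swapped,'' which amounts precisely to applying Theorem~\ref{theorem:rowrelation} to $A^T$ after observing that $\rho_b^{(\mu)}(A^T)=\rho_b^{(\mu)}(A)$ because $(x,y)$ and $(y,x)$ are identically distributed. Your relabeling of index sets is exactly what is needed, and there is nothing further to add.
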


Next we introduce the following two matrices.

\begin{definition}[Row matrix] $L$ is an $n$ by $n$ matrix, whose $i$-th row, where $i\in I$, is defined by 

\begin{equation}\label{eqn:R}
\row_i(L)(j):= 
\begin{cases}
k_{ij}, & \text{ if $j\in I_0$;}\\ 
k, & \text{ if $j=i$;}\\
0, & \text{ otherwise.}
\end{cases}
\end{equation}

The other entries of $L$ are zero, except the diagonal terms which are set to be 1.

\end{definition}

\begin{definition}[Column matrix] $R$ is an $n$ by $n$ matrix, whose $j$-th column, where $j\in J$, is defined by 

\begin{equation}\label{eqn:C}
\col_j(R)(i):=
\begin{cases}
l_{ij},& \text{ if $i\in J_0$;}\\
l, & \text{ if $i=j$;}\\ 
0, & \text{ otherwise.} 
\end{cases}
\end{equation}

The other entries of $R$ are zero, except the diagonal terms which are set to be 1.

\end{definition}

\begin{remark} For each $i\in I$, the non-singular matrix $L$ acts on the left of $A$ by rescaling $\row_i(A)$ by a factor of $k$, modulo $\sum_{i_0\in I_0} k_{ii_0}\row_{i_0}$. For each $j\in J$, the non-singular matrix $R$ acts on the right of $A$ by rescaling $\col_j(A)$ by a factor of $l$, modulo $\sum_{j_0\in J_0} l_{j_0j} \col_{j_0}$.
\end{remark}

Define 

$$A':=L A R.$$ 

First, consider the matrix $AR$. By definition, $(AR)_{ij}\in P_j$ for all $i\in I_j$, where $j\in J$. By adding a constant number of generators to $P_j$ we may assume that $(AR)_{ij} \in P_j$, where $i\in I_0$. 

Next, consider the  matrix $A'=LAR$. Suppose that $j\in J$, then we have 

$$(LAR)_{ij}= k (AR)_{ij} + \sum_{i_0\in I_0}k_{ii_0} (AR)_{i_0j}.$$ 

Because $k,k_{ii_0}=n^{O_{C,\ep,\mu}(1)}$, it thus follows that $(LAR)_{ij} \in n^{O_{C,\ep,\mu}(1)}\cdot P_j$ whenever $i\in I_j\cap I$. To avoid notational complication, we keep the same notation $P_j$ for this new proper symmetric GAP (which is still of rank $O_{C,\ep,\mu}(1)$ and size $n^{O_{C,\ep,\mu}(1)}$, with possibly worse constants).
 
We have just shown that for each $j\in J$ there exists a proper symmetric GAP $P_j$ of rank $O_{C,\ep,\mu}(1)$ and size $n^{O_{C,\ep,\mu}(1)}$ such that all but $n^\ep$ coordinates of the $j$-th column of $A'$ belong to $P_j$.

Similarly, by viewing $LAR$ as $(LA)R$, we infer that for each $i\in I$, there exists a proper symmetric GAP $Q_i$ of rank $O_{C,\ep,\mu}(1)$ and size $n^{O_{C,\ep,\mu}(1)}$ such that all but $n^\ep$ coordinates of the $i$-th row of $A'$ belong to $Q_i$.

Putting everything together, we obtain the following result.

\begin{theorem}[Matrix relation]\label{theorem:matrixrelation} Let $0<\ep \le 1$, $0<\mu \le 1$, and $C$ be positive constants. 
Assume that $\rho_b^{(\mu)}(A)\ge n^{-C}$. Then there exist index sets $I_0,J_0$, both of size $O_{C,\ep}(1)$, and index sets $I,J$, both of size $n-3n^\ep$, with $I\cap I_0 = \emptyset, J\cap J_0 =\emptyset$, such that the following holds. There exist two matrices $L,R$ defined by \eqref{eqn:R} and \eqref{eqn:C} respectively such that the matrix $A'=LAR$ possess the following properties.

\begin{itemize}

\item For each $i\in I$, there exist  a subset $\row_i' \subset \row_i(A')$ of size $n-n^\ep$ and a proper symmetric GAP $Q_i$ of rank $O_{C,\ep,\mu}(1)$ and size $n^{O_{C,\ep,\mu}(1)}$ such that $\row_i' \subset Q_i$.

\vskip .1in

\item For each $j\in J$, there exist a subset $\col_j' \subset \col_j(A')$ of size $n-n^\ep$ and a proper symmetric GAP $P_j$ of rank $O_{C,\ep,\mu}(1)$ and size $n^{O_{C,\ep,\mu}(1)}$ such that $\col_j' \subset P_j$.

\end{itemize}
\end{theorem}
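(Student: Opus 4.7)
The plan is to run Theorem \ref{theorem:rowrelation} and Theorem \ref{theorem:columnrelation} side by side, feed the integer data they produce into the matrices $L$ and $R$ given by \eqref{eqn:R} and \eqref{eqn:C}, and then verify that $A' = LAR$ carries the prescribed GAP structure simultaneously on rows and on columns. Concretely, I would first invoke Theorem \ref{theorem:rowrelation} on $A$ to obtain the integer $k = n^{O_{C,\eps,\mu}(1)}$, sets $I_0$ and $I$ with $|I|\ge n - 3n^\eps$, sets $J_i$ of size $\ge n - n^\eps$, integers $k_{ii_0}$, and GAPs $Q_i$ controlling the row combinations; applying Theorem \ref{theorem:columnrelation} to $A^{T}$ yields the dual data $l$, $J_0$, $J$, $I_j$, $l_{j_0 j}$, $P_j$. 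By construction, left-multiplication by $L$ replaces $\row_i(A)$ by $k\,\row_i(A) + \sum_{i_0\in I_0}k_{ii_0}\row_{i_0}(A)$ for each $i\in I$ and is the identity on the remaining rows, while right-multiplication by $R$ does the dual thing on columns in $J$.

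To extract the column conclusion, I would analyze $A' = L(AR)$. For $j\in J$ and $i\in I_j$, the column relation gives $(AR)_{ij} = l\,a_{ij} + \sum_{j_0\in J_0} l_{j_0 j}\,a_{ij_0}\in P_j$. Because $|I_0| = O_{C,\eps,\mu}(1)$, I can adjoin the (at most) $|I_0|$ values $(AR)_{i_0 j}$, $i_0\in I_0\setminus I_j$, as new generators of $P_j$; Lemma \ref{lemma:embeding} then restores properness while keeping the rank $O_{C,\eps,\mu}(1)$ and size $n^{O_{C,\eps,\mu}(1)}$. With this enlarged $P_j$, whenever $i\in I\cap I_j$,
$$
(LAR)_{ij} \;=\; k\,(AR)_{ij} \;+\; \sum_{i_0\in I_0} k_{ii_0}\,(AR)_{i_0 j}
$$
is an integer combination of elements of $P_j$ with coefficients bounded by $n^{O_{C,\eps,\mu}(1)}$, and hence lies in $n^{O_{C,\eps,\mu}(1)}\cdot P_j$, still a proper symmetric GAP of the required rank and size (relabel as $P_j$). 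The set of bad row indices $i\in I$ for column $j$ is contained in $I\setminus I_j \subseteq I_j^{c}$, which has size at most $n^\eps$; taking $\col_j'$ to be the column entries indexed by $I\cap I_j$ gives the column part of the conclusion. The row part follows by the mirror argument applied to $A' = (LA)R$, using the row relation for $LA$ and enlarging each $Q_i$ to absorb the $|J_0|$ extra values $(LA)_{i,j_0}$.

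The one nontrivial technical point is this GAP enlargement: one must verify that adjoining the $|I_0|$ (respectively $|J_0|$) extra generators does not inflate the rank beyond $O_{C,\eps,\mu}(1)$ nor the size beyond $n^{O_{C,\eps,\mu}(1)}$, and that properness can be restored after the adjunction. This is precisely what Lemma \ref{lemma:embeding} delivers, at an $O(1)$ multiplicative cost in size and no cost in rank. Everything else is routine bookkeeping: since $L$ and $R$ each mix only $O_{C,\eps,\mu}(1)$ rows or columns, the $n^\eps$-sized exceptional sets coming from the row and column relations transfer to $O(n^\eps)$-sized exceptional sets on $A'$, which after a harmless adjustment of the exponent $\eps$ is exactly the $n^\eps$ demanded by the statement.
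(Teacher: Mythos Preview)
Your proposal is correct and follows essentially the same route as the paper: apply the row and column relations (Theorems \ref{theorem:rowrelation} and \ref{theorem:columnrelation}) to get $L$ and $R$, enlarge each $P_j$ by the $O_{C,\eps,\mu}(1)$ values $(AR)_{i_0 j}$ for $i_0\in I_0$, and observe that $(LAR)_{ij}$ lands in a bounded dilate of $P_j$ for $i\in I\cap I_j$, then argue symmetrically for rows via $(LA)R$. The only cosmetic difference is that you invoke Lemma \ref{lemma:embeding} explicitly to restore properness after adjoining generators, whereas the paper absorbs this step into the phrase ``adding a constant number of generators to $P_j$''.
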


We now deduce Theorem \ref{theorem:ILO:bilinear:differentGAP}. Assume that $i\in I$ and $j\in J$. We then have  

\begin{align*}
a_{ij}' = &kl a_{ij} + \sum_{i_0\in I_0,j\in J_0}k_{ii_0}a_{i_0j_0}l_{j_0j} \\
&+ l\sum_{i_0\in I_0} k_{ii_0} a_{i_0j} + k\sum_{j_0\in J_0}l_{j_0j} a_{ij_0}.
\end{align*}

This identity implies

\begin{eqnarray}\label{eqn:aa'}
a_{ij} &=& \frac{a_{ij}'}{kl} - \sum_{i_0\in I_0,j_0\in J_0} \frac{k_{ii_0} l_{j_0j} a_{i_0,j_0}}{kl} \nonumber \\
      &-& \sum_{i_0\in I_0} \frac{k_{ii_0} a_{i_0j}}{k} - \sum_{j_0\in J_0}\frac{l_{j_0j} a_{ij_0}}{l}.
\end{eqnarray}

To complete the proof of Theorem \ref{theorem:ILO:bilinear:differentGAP} we just need to add $a_{i_0j_0}$ to the set of the generators of $Q_i$.

To finish the proof of Theorem \ref{theorem:ILO:bilinear:commonGAP}, it is enough to show that the proper symmetric GAPs from Theorem \ref{theorem:matrixrelation} can be unified.

\begin{lemma}\label{lemma:commonGAP}
Assume that for each $i\in I$, there exist  a subset $\row_i' \subset \row_i$ of size $n-n^\ep$ and a proper symmetric GAP $Q_i$ of rank $O_{C,\ep,\mu}(1)$ and size $n^{O_{C,\ep,\mu}(1)}$ such that $\row_i' \subset Q_i$, and for each $j\in J$, there exist a subset $\col_j' \subset \col_j$ of size $n-n^\ep$ and a proper symmetric GAP $P_j$ of rank $O_{C,\ep,\mu}(1)$ and size $n^{O_{C,\ep,\mu}(1)}$ such that $\col_j' \subset P_j$. Then there exist a bounded number of generators $g_1,\dots,g_s$, where $s=O_{C,\ep,\mu}(1)$, such that the set $\{\sum_{h=1}^s (p_h/q_h) g_h, |p_h|,|q_h|=n^{O_{C,\ep,\mu}(1)}\}$ contains all but at most $\ep n$ entries of all but at most $\ep n$ rows of $A$.
\end{lemma}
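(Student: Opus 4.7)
The plan is to pick a bounded collection of ``reference'' rows $i_1,\ldots,i_t\in I$ whose row-GAP generators will serve as the common generators $g_1,\ldots,g_s$; the column GAPs $P_j$ will then force every other matrix entry to be a bounded $\mathbb{Q}$-rational combination of the reference-row entries, hence a bounded rational combination of the $g_h$. Invoking Theorem \ref{theorem:fullrank}, I assume each $\row_i'$ spans $Q_i$ and each $\col_j'$ spans $P_j$ over $\mathbb{Q}$. I then reduce each $P_j$ to minimal effective rank: iteratively, if a proper $\mathbb{Q}$-subspace $W\subsetneq P_j$ contains more than $n^\ep$ of the $\col_j'$-entries, absorb those extra entries into the exception budget and replace $P_j$ by the subGAP spanned by the surviving entries. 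After at most $r=O_{C,\ep,\mu}(1)$ reductions per column, each $P_j$ satisfies the non-degeneracy property that for any proper $\mathbb{Q}$-subspace $W\subsetneq P_j$, the set $S_j(W):=\{i\in I : a_{ij}\in P_j \text{ and } \phi_j(a_{ij})\notin W\}$ has $|S_j(W)|\ge (1-o(1))|I|$, where $\phi_j$ denotes the coordinate map on $P_j$.

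Next I run a greedy rank-increase. Starting from $I'=\emptyset$, at each iteration let $B$ denote the set of columns $j$ where $\phi_j(a_{i,j})_{i\in I'}$ fails to span $P_j$. If $|B|>\ep n$, the non-degeneracy property gives, for each $j\in B$, at least $(1-o(1))|I|$ rows in $I$ that strictly enlarge the current span at $j$. Averaging over $i\in I$ produces a single row $i^*$ lying in $S_j(W_j)$ for at least $(1-o(1))|B|$ values of $j\in B$, where $W_j$ is the current span at $j$. Adding $i^*$ to $I'$ contracts $|B|$ to $o(|B|)$ per step, so the process halts in $t=O_{C,\ep,\mu}(1)$ steps, producing reference rows $i_1,\ldots,i_t\in I$.

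For each column $j$ where the reference rows span $P_j$ and each row $i\in I$ with $a_{ij}\in P_j$, Cramer's rule applied in $\mathbb{Z}^{\rank P_j}$ (with coordinate entries bounded by $n^{O_{C,\ep,\mu}(1)}$) gives
\begin{equation*}
a_{ij}=\sum_{s=1}^{t}\frac{\alpha_{ij,s}}{\beta_j}\,a_{i_s,j},\qquad |\alpha_{ij,s}|,|\beta_j|\le n^{O_{C,\ep,\mu}(1)}.
\end{equation*}
Since $a_{i_s,j}\in Q_{i_s}$ admits $a_{i_s,j}=\sum_h m_{s,j,h}\,g_h^{(i_s)}$ with $|m_{s,j,h}|\le n^{O_{C,\ep,\mu}(1)}$, setting $\{g_1,\ldots,g_s\}:=\bigcup_{s=1}^{t}\{g_1^{(i_s)},\ldots,g_r^{(i_s)}\}$ (with $s\le tr=O_{C,\ep,\mu}(1)$) and substituting produces $a_{ij}=\sum_h(p_h/q_h)\,g_h$ in the claimed form. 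The exceptional $(i,j)$ pairs are $O(\ep n)$ bad columns plus $O(n^\ep)$ row-level exceptions per column, amounting to $O(\ep n)$ bad entries in $O(\ep n)$ rows after bookkeeping.

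The technical crux is the minimal-rank reduction in the first paragraph: the non-degeneracy it yields is exactly what drives the greedy termination, but the reduction must be carried out without exhausting the $\ep n$ exception budget. Since each reduction step per column loses only $n^\ep$ entries and there are $O(r)=O_{C,\ep,\mu}(1)$ such steps per column, the total $O(n^{1+\ep})$ lost entries remain comfortably below $\ep n^2$; aligning the rank, exception, and iteration parameters is the main delicate point. Once that balance is in place, the greedy and the Cramer-plus-substitution portions are essentially routine.
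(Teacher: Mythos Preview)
Your overall scheme --- pick a bounded set of reference rows, use the column GAPs $P_j$ together with Cramer's rule to express every other entry as a bounded rational combination of the reference-row entries, then substitute the row-GAP expansions of those --- is exactly the dual of the paper's argument (which picks reference \emph{columns} and does the spanning inside the row GAPs $Q_i$), and it would go through with the right parameters. But the reduction/non-degeneracy step as you have written it does not work, and this is a genuine gap, not just a wording issue.

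You claim that after your minimal-rank reduction, every proper subspace $W\subsetneq P_j$ satisfies $|S_j(W)|\ge (1-o(1))|I|$, i.e.\ almost all column entries lie outside $W$. No reduction can deliver this: if half the entries of $\col_j'$ sit in one hyperplane $W_1$ and the other half in a different hyperplane $W_2$, neither pass-to-$W$ move is permissible (you would discard $n/2$ entries), yet $|S_j(W_1)|\approx |I|/2$. Your described procedure (``if $W$ contains more than $n^\ep$ entries, absorb those into the exception budget'') is inverted --- you would be throwing away almost everything --- and an $n^\ep$ threshold is in any case far too weak to feed the greedy. Relatedly, ``adding $i^*$ contracts $|B|$ to $o(|B|)$'' is false: one new row raises the rank at each column by at most $1$, so no column leaves $B$ until it has been hit $\rank(P_j)$ times. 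The correct version, which is what the paper does on the row side, is to set $\delta=\ep/2r$ and reduce only when some hyperplane contains at least a $(1-\delta)$-fraction of the entries; each of the $\le r$ steps then discards at most a $\delta$-fraction, for total loss $\le \ep/2$, and the conclusion is merely that every hyperplane misses at least a $\delta$-fraction of the survivors. The greedy then terminates not by geometric shrinkage of $|B|$ but by a total-dimension count: each added reference row increases $\sum_{j\in B}\dim(\text{current span at }j)$ by at least $\delta|B|\ge \delta\ep n$, and this sum is capped at $rn$, so one halts in $O(r/(\delta\ep))=O_{C,\ep,\mu}(1)$ steps. With that correction your argument is complete and essentially identical to the paper's.
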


It is clear that Theorem \ref{theorem:ILO:bilinear:commonGAP} follows from Lemma \ref{lemma:commonGAP}. It thus remains to verify this lemma.   

\begin{proof}(of Lemma \ref{lemma:commonGAP}) Throughout the proof, if not specified, all the rows and columns will have index in $I$ and $J$ respectively. We assume that all the proper GAPs has rank at most $r=O_{C,\ep}(1)$. 

By throwing away at most $\ep n /2$ rows, we may assume that for each row $\row_i$ all but at most $n^\ep/2\ep$ indices $j$ satisfy $\row_i(j)\in \col_j' \subset P_j$. Let $\row_i'$ be the collection of these $\row_i(j)$ for each $i$. 

Set 

$$\delta = \ep/2r.$$

Consider an arbitrary $\row_i'$. It's components are combinations of the generators of $Q_i$. Thus we may view these elements of $\row_i'$ as vectors over $\Z^{\rank(Q_i)}$ (see Section \ref{appendix:fullrank}). We say that the elements of $\row_i'$ are {\it independent} if their defining vectors are independent. 

Next we will choose a subset $\row_i''$ of $\row_i'$ with the following properties.

\begin{enumerate}

\item $|\row_i''|\ge (1-\ep)n$.

\vskip .1in

%\item $\row_i''(j)\subset \col_j'$ for all $\row_i''(j)\in \row_i''$.

%\vskip .1in

\item Let $H_i$ be the subspace generated by the defining vectors of the components of $\row_i''$. Then any hyperplane of $H_i$ contains no more than $(1-\delta)|\row_i''|$ such defining vectors.

\end{enumerate}

We show that there must exist such $\row_i''$. 

Assume that $\row_i'$ does not have the above property. By definition of $\row_i'$, this means that (2) is not satisfied. We next pass to consider the set of at least $(1-\delta)|\row_i'|$ components that belong to a proper subspace. Assume that this set does not have the above properties either, we then keep iterating the process. Because the dimensions of the subspaces strictly decrease after each step, the process must terminate after at most $r$ steps. By definition, the subset $\row_i''$ obtained at the time of termination has the desired properties. 

Also, 

$$|\row_i''| \ge |\row_i'|- r\delta |\row_i'| = (1-\ep/2)(n-n^\ep/2\ep)\ge (1-\ep) n.$$

Now we will group some generators from the $P_j$'s to create a new set $S$.

We start with the first column $\col_{j_1}$ and put the generators of $P_{j_1}$ into $S$. Assume that  we already gathered the generators of $P_{j_1},\dots, P_{j_k}$ after $k$ steps.

To choose a $P_j$ for the next step, we consider the defining vectors of $\row_i''(j_1),\dots \row_i''(j_k)$ for each $i$. Let $\dim(\row_i''(j_1),\dots \row_i''(j_k))$ denote the dimension of the subspace generated by these vectors. 

By the definition of $\row_i''$, if  $\row_i''(j_1),\dots \row_i''(j_k)$ do not generate $H_i$ (in which case we say that $\row_i''$ is not {\it complete}), then there are at least $\delta(1-\ep)n \ge \delta n/2$ ways to choose $P_j$ so that  $\dim(\row_i''(j_1),\dots \row_i''(j_k),\row_i''(j)) =\dim(\row_i''(j_1),\dots \row_i''(j_k))+1$. In this case we say that there is an {\it increase in dimension} in $\row_i''$.  

Hence after some $k$ steps, if there are $\alpha n$ rows that are not complete, then, by the pigeon-hole principle, there is a choice for $P_j$ which results in an increase in dimension in at least $\alpha \delta n/2$ rows $\row_i''$.

Because the total of the dimensions is bounded by $rn$, there must be at least $(1-\ep)n$ rows that are complete after at most $2r/(\ep \delta) = 2r^2 \ep^2$ steps. Let $S$ be the collection of all the generators of $P_j$ considered until this step. The size $s$ of $S$ is then at most $2r^3/\ep^2$.

Consider a row $\row_i''$ that is complete. Assume that its elements are generated by $\row_i''(j_1),\dots, \row_i''(j_r)$, where $\row_i''(j_k)\in P_{j_k}$, a GAP whose generators belong $S$. Let $a$ be any element of $\row_i''$, and let $\mathbf{a}$ be its defining vector in $Q_i$, we then have 

$$a= \det\big(\mathbf{a},\row_i''(j_2),\dots,\row_i''(r)\big)\det\big(\row_i''(j_1),\dots,\row_i''(j_r) \big)^{-1}\cdot \row_i''(j_1)+\dots $$

$$+ \det\big(\row_i''(j_1),\dots,\row_i''(j_{r-1}),\mathbf{a} \big)\det\big(\row_i''(j_1),\dots,\row_i''(j_r)\big)^{-1}\cdot \row_i''(j_r).$$ 

Thus $a$ can be written in the form $\sum_{h=1}^s (p_h/q_h) \cdot g_h $, where $|p_h|, |q_h| =n^{O_{C,\ep}(1)}$. 

\end{proof}

\section{proof of Theorem \ref{theorem:ILO:quadratic:differentGAP} and Theorem \ref{theorem:ILO:quadratic:commonGAP}}\label{section:proof:quadraticILO}

In this section we will use the results from Section \ref{section:proof:bilinearILO} to prove Theorem  \ref{theorem:ILO:quadratic:differentGAP} and Theorem \ref{theorem:ILO:quadratic:commonGAP}.  

Let $U$ be a random subset of $\{1,\dots,n\}$, where $\P(i\in U)=1/2$ for each $i$. Let $A_U$ be a submatrix of $A$ defined by 

\[
A_U(ij)= 
\begin{cases}
a_{ij} & \text{ if either $i\in U, j\notin  U $ or $i\notin U, j\in U$},\\
0 & \text{otherwise.}
\end{cases}
\]

We first apply the following lemma.

\begin{lemma}[Concentration for bilinear forms controls concentration for quadratic forms]\label{lemma:decoupling}

$$\rho_q(A)^8 \le \P_{v,w} (\sum_{i,j}A_U(ij)v_iw_j =0),$$

where $v_i,w_j$ are iid copies of $\eta^{1/2}$.
\end{lemma}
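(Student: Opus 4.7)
The plan is to use iterated Cauchy--Schwarz (``squaring'') in the decoupling tradition of Costello--Tao--Vu, together with a sign-flip trick that produces the matrix $A_U$ directly. The exponent $\rho_q(A)^8 = \rho_q(A)^{2^3}$ in the statement reflects three successive squarings: one to convert the quadratic form into a bilinear form, and two more to upgrade each slot of that bilinear form from a $\pm 1$ to an $\eta^{1/2}$ random variable.

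The first step introduces $U$ and the matrix $A_U$ via a sign-flip decoupling. Let $\epsilon = (\epsilon_1, \dots, \epsilon_n)$ be an iid $\pm 1$ vector independent of $x$, and set $U := \{i : \epsilon_i = 1\}$, which is a uniform random subset. Since $A$ is symmetric, one has the algebraic identity
\[
Q(x) - Q(\epsilon \cdot x) \;=\; \sum_{i,j} a_{ij}\, x_i x_j\,(1 - \epsilon_i \epsilon_j) \;=\; 2\sum_{i,j} A_U(ij)\, x_i x_j,
\]
where $(\epsilon \cdot x)_i := \epsilon_i x_i$. Since each pair $(x_i, \epsilon_i x_i)$ is uniform on $\{\pm 1\}^2$, the vectors $x$ and $\epsilon \cdot x$ are jointly distributed as iid uniform $\pm 1$ vectors, so the Cauchy--Schwarz inequality $\rho(Z)^2 \le \P(Z = Z')$ yields
\[
\rho_q(A)^2 \;\le\; \P\bigl(Q(x) = Q(\epsilon \cdot x)\bigr) \;=\; \P\!\Bigl(\sum_{i,j} A_U(ij)\, x_i x_j = 0\Bigr).
\]
Since $A_U$ has vanishing diagonal, the right-hand quadratic form equals $2\sum_{i \in U, j \in U^c} a_{ij}\, x_i x_j$, i.e.\ the bilinear form $2B(x|_U,\, x|_{U^c})$ in the two independent iid $\pm 1$ subvectors $x|_U$ and $x|_{U^c}$.

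The second and third squarings upgrade each of the two Bernoulli slots of $B$ into an $\eta^{1/2}$ slot. For a bilinear form in iid $\pm 1$ variables, freezing one slot makes it a linear form in the other, and the elementary identity $\rho(\sum c_i y_i)^2 \le \P(\sum c_i \tilde y_i = 0)$ --- a special case of $\rho(Z)^2 \le \P(Z - Z' = 0)$, with $\tilde y_i := (y_i - y_i')/2 \sim \eta^{1/2}$ iid --- upgrades one slot from $\pm 1$ to $\eta^{1/2}$ at the cost of squaring the concentration probability. Averaging over the frozen slot and over $U$ via Jensen's inequality absorbs the outer expectation, and iterating the trick on the second slot yields, after three squarings in total,
\[
\rho_q(A)^8 \;\le\; \P_{U,\,v,\,w}\!\Bigl(\sum_{i,j} A_U(ij)\, v_i w_j = 0\Bigr),
\]
with $v, w$ iid $\eta^{1/2}$ and $U$ uniform, all independent.

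The main obstacle is the final bookkeeping: once the slot-by-slot squarings produce a bilinear form supported on the single block $U \times U^c$ with iid $\eta^{1/2}$ entries, one must match this to the full two-block expression $\sum_{i,j} A_U(ij) v_i w_j$ appearing in the lemma. This is done by extending the decoupled $\eta^{1/2}$ vectors to iid $\eta^{1/2}$ vectors $v, w$ on all of $\{1,\dots,n\}$ (sampling fresh $\eta^{1/2}$ entries on the complementary indices) and using the symmetry $A_U(ij) = A_U(ji)$ to check that the added block $\sum_{i \in U^c, j \in U} a_{ij} v_i w_j$ is an independent identically distributed copy of the block produced by the decoupling. The careful orchestration of the three squarings with this final splitting --- so that the independence between $U$, $v$, $w$ survives and the exponents compose to exactly $8$ rather than a larger power --- is the key technical point; the algebraic heart of the argument is the sign-flip identity from the first step, which crucially uses the symmetry of $A$ to make $A_U$ appear naturally.
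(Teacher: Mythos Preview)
Your first two steps are correct and form a clean probabilistic decoupling: the sign-flip identity gives $\rho_q(A)^2 \le \E_U \P_x\bigl(B(x|_U,x|_{\bar U})=0\bigr)$, and two further Cauchy--Schwarz squarings upgrade each $\pm 1$ slot to $\eta^{1/2}$, yielding
\[
\rho_q(A)^8 \;\le\; \E_U \,\P_{y,z}\Bigl(\sum_{i\in U,\,j\in\bar U} a_{ij}\,y_i z_j = 0\Bigr),
\qquad y_i,z_j\sim\eta^{1/2}.
\]
This is a valid and useful inequality---but it bounds $\rho_q(A)^8$ by the concentration of the \emph{single-block} bilinear form, not of $\sum_{i,j}A_U(ij)v_iw_j$, which equals the sum of \emph{two} independent copies of that block (one on $U\times\bar U$, one on $\bar U\times U$ via symmetry).

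The gap is precisely your final ``bookkeeping'' step. Extending $y,z$ to full vectors $v,w$ and noting that the second block is an independent identically distributed copy of the first amounts to claiming $\P(B_1=0)\le\P(B_1+B_2=0)$ for iid symmetric $B_1,B_2$. This is false in general: if $\P(B_1=0)=0.9$ with the remaining mass spread thinly, then $\P(B_1+B_2=0)=\sum_a\P(B_1=a)^2\approx 0.81<0.9$. No rearrangement of your three probabilistic squarings fixes this with exponent $8$; a purely probabilistic route seems to need a fourth squaring (giving $\rho_q^{16}$) to both upgrade the slots \emph{and} combine the two blocks.

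The paper avoids this by working throughout with the Fourier integral $\int_0^1 \E_x e^{2\pi i t Q(x)}\,dt$ rather than with probabilities. Two applications of Cauchy--Schwarz inside the integral (one in $x_{\bar U}$, one in $x_U$) simultaneously decouple the quadratic form \emph{and} upgrade both slots to $\eta^{1/2}$, reaching the single-block bilinear characteristic function at the level of the $4$th power. The third squaring is then applied to this complex integral, producing a \emph{difference} $B(y_U,z_{\bar U})-B(y_U',z_{\bar U}')$ of two independent copies; the symmetry $a_{ij}=a_{ji}$ lets one relabel the second copy as the $\bar U\times U$ block, so that setting $v=(y_U,-z_{\bar U}')$, $w=(y_U',z_{\bar U})$ gives exactly $\sum_{i,j}A_U(ij)v_iw_j$ with $v,w$ iid $\eta^{1/2}$. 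The point is that the ``combine two blocks'' step costs one squaring only because it is performed on a characteristic function rather than on a probability---this is the idea your proposal is missing.
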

 
\begin{proof}(of Lemma \ref{lemma:decoupling}) We first write  

$$\P_x(\sum_{i,j}a_{ij}x_ix_j=a) = \E_x \int_{0}^1 \exp\big(2\pi \sqrt{-1} (\sum_{i,j}a_{ij}x_ix_j-a)t\big)dt.$$ 

Hence, 

$$\P_x(\sum_{i,j}a_{ij}x_ix_j=a) \le \int_{0}^1 \big|\E_x \exp(2\pi \sqrt{-1} (\sum_{i,j}a_{ij}x_ix_j)t)dt\big|.$$

Next we consider $x$ as $(x_U,x_{\bar{U}})$, where $x_U, x_{\bar{U}}$ are the vectors corresponding to $i\in U$ and  $i\notin U$ respectively. By the Cauchy-Schwarz inequality 

\begin{align*}  
&\left(\int_{0}^1 \big|\E_x \exp(2\pi \sqrt{-1} (\sum_{i,j}a_{ij}x_ix_j)t)\big|dt\right)^4 \le \left(\int_{0}^1  \big|\E_x \exp(2\pi \sqrt{-1} (\sum_{i,j}a_{ij}x_ix_j)t)\big|^2dt\right)^2\\
&\le \left(\int_{0}^1 \E_{x_U}\big|\E_{x_{\bar{U}}}\exp(2\pi \sqrt{-1} (\sum_{i,j} a_{ij}x_ix_j)t)\big|^2 dt\right)^2 \\ 
&=\left( \int_{0}^1 \E_{x_U}\E_{x_{\bar{U}},x_{\bar{U}}'} \exp\Big(2\pi \sqrt{-1}\big( \sum_{i\in U,j\in \bar{U}}a_{ij}x_i(x_j-x_j')+\sum_{i\in \bar{U},j\in \bar{U}}a_{ij}(x_ix_j-x_i'x_j')\big)t\Big)dt\right)^2\\
&\le \int_{0}^1 \E_{x_{\bar{U}},x_{\bar{U}}'}\Big|\E_{x_{U}}\exp\Big(2\pi \sqrt{-1} \big(\sum_{i\in U,j\in \bar{U}}a_{ij}x_i(x_j-x_j')+\sum_{i\in \bar{U},j\in \bar{U}}a_{ij}(x_ix_j-x_i'x_j')\big)t\Big)\Big|^2 dt\\
&=\int_{0}^1 \E_{x_U,x_U',x_{\bar{U}},x_{\bar{U}}'} \exp\Big(2\pi \sqrt{-1} \big(\sum_{i\in U, j\in \bar{U}}a_{ij}(x_i-x_i')(x_j-x_j')\big)t\Big)dt.\\
&=\int_{0}^1\E_{y_{U},z_{\bar{U}}}\exp\Big(2\pi \sqrt{-1}(\sum_{i\in \bar{U},j\in U} a_{ij}y_iz_j)t\Big)dt,
\end{align*}

where $y_{U}=x_{U}-x_{U}'$ and $z_{\bar{U}}=x_{\bar{U}}-x_{\bar{U}}'$, whose entries are iid copies of $\eta^{1/2}$.

Thus we have 

$$\left(\int_{0}^1 \big|\E_x \exp(2\pi \sqrt{-1} (\sum_{i,j}a_{ij}x_ix_j)t)\big|dt\right)^8\le \left(\int_{0}^1\E_{y_U,z_{\bar{U}}}\exp\big(2\pi \sqrt{-1}(\sum_{i\in U,j\in \bar{U}} a_{ij}y_iz_j)t\big)dt\right)^2$$

$$\le \int_{0}^1\E_{y_U,z_{\bar{U}}, y_U', z_{\bar{U}}'}\exp\big(2\pi \sqrt{-1} (\sum_{i\in U,j\in \bar{U}}a_{ij}y_iz_j-\sum_{i\in U, j\in \bar{U}} a_{ij}y_i' z_j')t\big)dt.$$

Because $a_{ij}=a_{ji}$, we can write the last term as

\begin{align*}
&\int_{0}^1\E_{y_U,z_{\bar{U}}', y_U', z_{\bar{U}}}\exp\Big(2\pi \sqrt{-1} \big(\sum_{i\in U,j\in \bar{U}}a_{ij}y_iz_j+\sum_{j\in \bar{U}, i\in U} a_{ji}(-z_j')y_i'\big)t\Big)dt\\
&=\int_{0}^1 \E_{v,w}\exp\big(2\pi \sqrt{-1}( \sum_{i\in U, j\in \bar{U}}a_{ij}v_iw_j + \sum_{i\in \bar{U}, j\in U} a_{ij}v_iw_j)t\big)dt,
\end{align*}

where $v:=(y_U,-z_{\bar{U}}')$ and $w:=(y_U', z_{\bar{U}})$. 

To conclude the proof we observe that the entries of $v$ and $w$ are iid copies of $\eta^{1/2}$, and  

$$\int_{0}^1 \E_{v,w}\exp\Big(2\pi \sqrt{-1}\big( \sum_{i\in U j\in \bar{U}}a_{ij}v_iw_j + \sum_{i\in \bar{U}, j\in U} a_{ij}v_iw_j\big)t\Big)dt = \P_{v,w}(\sum_{i,j}A_U(ij)v_iw_j =0).$$

\end{proof}

Next, it follows from Lemma \ref{lemma:decoupling} that  

$$\P_{v,w} (\sum_{i,j}A_U(ij)v_iw_j =0)\ge n^{-8C}.$$

This inequality means that $\rho_q^{(1/2)}(A_U)\ge n^{-8C}$.  We now apply Lemma \ref{lemma:rowtrullyorthogonal}. 

\begin{lemma}\label{lemma:quadratic:row} There exist a set $I_0(U)$ of size $O_{C,\ep}(1)$ and a set $I(U)$ of size at least $n-n^\ep$ such that for any $i\in I$, there are integers $0\neq k(U)$ and $k_{ii_0}(U), i_0\in I_0(U)$, all bounded by $n^{O_{C,\ep}(1)}$, such that 

$$\P_y\big(\langle k(U)\row_{A_U}(i),y \rangle + \langle \sum_{i_0\in I_0} k_{ii_0}(U) \row_{A_U}(i_0),y \rangle = 0\big) = n^{-O_{C,\ep}(1)}.$$
\end{lemma}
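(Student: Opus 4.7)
The plan is to reduce to the bilinear setting via Lemma \ref{lemma:decoupling} and then to invoke Lemma \ref{lemma:rowtrullyorthogonal} applied to the matrix $A_U$. Starting from $\rho_q(A) \ge n^{-C}$, Lemma \ref{lemma:decoupling} (with the right-hand side interpreted as an expectation over the random set $U$ as well as over $v,w$) gives
$$\E_U \P_{v,w}\Big(\sum_{i,j} A_U(ij) v_i w_j = 0\Big) \ge \rho_q(A)^8 \ge n^{-8C}.$$
Since the inner probability lies in $[0,1]$, Markov's inequality applied to its complement yields
$$\P_U\Big(\rho_b^{(1/2)}(A_U) \ge n^{-8C}/2\Big) \ge n^{-8C}/2.$$
I restrict attention to such ``good'' $U$'s, for which $\rho_b^{(1/2)}(A_U) \ge n^{-C'}$ with some $C' = C'(C)$; on the remaining ``bad'' $U$'s the lemma's conclusion may be interpreted vacuously (e.g.\ by taking $I_0(U) = \emptyset$ and $k(U) = 1$).

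For a good $U$, I would then apply Lemma \ref{lemma:rowtrullyorthogonal} (with $\mu = 1/2$ and with $\ep$ replaced by $\ep/2$) directly to the matrix $A_U$. This immediately produces a nonzero integer $k(U)$, a set $I_0(U)$ of size $O_{C,\ep}(1)$, a set $I(U) \subset \{1,\dots,n\} \setminus I_0(U)$ of size at least $n - 3n^{\ep/2} \ge n - n^\ep$ (for $n$ large), and integers $k_{ii_0}(U)$ for $i \in I(U)$, $i_0 \in I_0(U)$, all of magnitude at most $n^{O_{C,\ep}(1)}$, such that
$$\P_y\Big(k(U) \langle \row_{A_U}(i), y \rangle + \sum_{i_0 \in I_0(U)} k_{i i_0}(U) \langle \row_{A_U}(i_0), y \rangle = 0\Big) \ge \frac{\rho_b^{(1/2)}(A_U)}{n^{O_{C,\ep}(1)}} = n^{-O_{C,\ep}(1)}$$
for every $i \in I(U)$. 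This is exactly the conclusion of the lemma.

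I do not expect any real obstacle beyond bookkeeping. The substantive work has already been carried out: Lemma \ref{lemma:decoupling} converts the quadratic Littlewood-Offord problem into a bilinear one on $A_U$, and Lemma \ref{lemma:rowtrullyorthogonal}, proved in Section \ref{section:proof:bilinearILO} via Theorem \ref{theorem:ILO:general} and the pigeonholing over coefficient tuples, supplies the refined row relation for any matrix with sufficiently large bilinear concentration probability. The only care required is passing from $\rho_q(A) \ge n^{-C}$ to $\rho_b^{(1/2)}(A_U) \ge n^{-O_C(1)}$ for a polynomial fraction of $U$'s via the averaging step above, and verifying that the final exponent $n^{-O_{C,\ep}(1)}$ absorbs the factor $n^{8C}$ lost in that passage.
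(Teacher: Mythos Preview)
Your overall approach matches the paper's: reduce to the bilinear problem via Lemma~\ref{lemma:decoupling}, then invoke Lemma~\ref{lemma:rowtrullyorthogonal} for the matrix $A_U$. However, you have slightly misread Lemma~\ref{lemma:decoupling}. Its proof fixes an arbitrary partition $U$ of $\{1,\dots,n\}$ at the outset (the Cauchy--Schwarz steps split $x=(x_U,x_{\bar U})$ for that fixed $U$), so the inequality
\[
\rho_q(A)^8 \le \P_{v,w}\Big(\sum_{i,j} A_U(ij)v_iw_j=0\Big)
\]
holds for \emph{every} $U$, not merely in expectation over a random $U$. This is why the paper can assert, immediately after stating the present lemma, that it ``holds for all $U$'' and then pigeonhole over all $2^n$ sets $U$.

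Your averaging/Markov step therefore weakens the conclusion unnecessarily: you only obtain the row relation for a $n^{-O(1)}$-fraction of $U$'s, and your proposed ``vacuous'' choice $I_0(U)=\emptyset$, $k(U)=1$ on the remaining $U$'s does not actually satisfy the probability lower bound in the lemma. The fix is trivial---drop the averaging over $U$ entirely, note that $\rho_b^{(1/2)}(A_U)\ge n^{-8C}$ for each fixed $U$, and apply Lemma~\ref{lemma:rowtrullyorthogonal} directly. With that correction your argument is exactly the paper's.
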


Note that Lemma \ref{lemma:quadratic:row} holds for all $U$. We will try to obtain a similar conclusion for $A$.

As $I_0(U)\subset [n]^{O_{C,\ep}(1)}$ and $k(U)\le n$, there are only $n^{O_{C,\ep}(1)}$ possibilities that $(I_0(U),k(U))$ can take. Thus there exists a tuple $(I_0,k)$ such that 
$I_0(U)=I_0$ and $k(U)=k$ for $2^n/n^{O_{C,\ep}(1)}$ different $U$. Let us denote this set of $U$ by $\mathcal{U}$. Thus 

$$|\mathcal{U}|\ge 2^n/n^{O_{C,\ep}(1)}.$$

Next, let $I$ be the collection of $i$ which belong to at least $|\mathcal{U}|/2$ index sets $I_U$. Then we have
 
\begin{align*}                         
|I||\mathcal{U}| + (n-|I|)|\mathcal{U}|/2 & \ge (n-n^\ep )|\mathcal{U}|\\
|I| &\ge  n-2n^\ep.
\end{align*}

Fix an $i\in I$. Consider the tuples $(k_{ii_0}(U), i_0\in I_0)$ where $i\in I_U$. Because there are only $n^{O_{C,\ep}(1)}$ possibilities such tuples can take, there must be a tuple, say $(k_{ii_0}, i_0\in I_0)$, such that $(k_{ii_0}(U), i_0\in I_0)=(k_{ii_0}, i_0\in I_0)$ for at least $|\mathcal{U}|/2n^{O_{C,\ep}(1)}=2^n/n^{O_{C,\ep}(1)}$ sets $U$. 

Because $|I_0|=O_{C,\ep}(1)$, it is easy to see that there is a way to partition $I_0$ into $I_0' \cup I_0''$ such that there are $2^n/n^{O_{C,\ep}(1)}$ sets $U$ above satisfying that $I_0''\subset U$ and  $U\cap I_0'=\emptyset$. Let $\mathcal{U}_{I_0',I_0''}$ denote the collection of these $U$.

By passing to consider a subset of  $\mathcal{U}_{I_0',I_0''}$ if needed, we may assume that either $i\notin U$ or $i\in U$ for all $U\in  \mathcal{U}_{I_0',I_0''}$. Without loss of generality, we assume the first case that $i\notin U$. (The other case can be treated similarly).

Let $U\in \mathcal{U}_{I_0',I_0''}$ and $u=(u_1,\dots,u_n)$ be its characteristic vector, that is $u_j=1$ if $j\in U$, and $u_j=0$ otherwise. Then, by the definition of $A_U$, and because $I_0''\subset U$ and $I_0'\cap U=\emptyset$, for $i_0'\in I_0'$ and $i_0''\in I_0''$ we can respectively write 

$$\langle \row_{i_0'}(A_U),y \rangle = \sum_{j=1}^n a_{i_0'j}u_jy_j, \mbox{ and } \langle \row_{i_0''}(A_U),y \rangle = \sum_{j=1}^n a_{i_0''j}(1-u_j)y_j.$$

Also, because $i\notin U$, we have

$$\langle \row_{i}(A_U),y \rangle = \sum_{j=1}^n a_{ij}u_jy_j.$$

Thus, 

\begin{align*}
&\quad \langle k\row_i(A_U),y \rangle + \sum_{i_0\in I_0} \langle k_{ii_0} \row_{i_0}(A_U),y \rangle\\
& = \langle k\row_i(A_U),y \rangle +  \langle \sum_{i_0'\in I_0'} k_{ii_0'} \row_{i_0'}(A_U),y \rangle + \langle \sum_{i_0''\in I_0''} k_{ii_0''} \row_{i_0''}(A_U),y \rangle\\ 
&= \sum_{j=1}^n ka_{ij} u_jy_j + \sum_{j=1}^n \sum_{i_0'\in I_0'} k_{ii_0'} a_{i_0'j} u_jy_j +  \sum_{j=1}^n \sum_{i_0''\in I_0''} k_{ii_0''} a_{i_0''j} (1-u_j)y_j\\
&= \sum_{j=1}^n (ka_{ij} + \sum_{i_0'\in I_0'} k_{ii_0'} a_{i_0'j}- \sum_{i_0''\in I_0''} k_{ii_0''} a_{i_0''j} ) u_jy_j +  \sum_{j=1}^n \sum_{i_0''\in I_0''} k_{ii_0''} a_{i_0''j} y_j
\end{align*}

Next, by Lemma \ref{lemma:quadratic:row}, for each $U\in \mathcal{U}_{I_0',I_0''}$ we have  

$$\P_y\big (\langle k \row_i(A_U),y \rangle + \sum_{i_0\in I_0} \langle k_{ii_0} \row_{i_0}(A_U),y \rangle = 0\big)=n^{-O_{C,\ep}(1)}.$$ 

Also, note that 

$$|\mathcal{U}_{I_0',I_0''}|= 2^n/n^{O_{C,\ep}(1)}.$$ 

Hence, 

$$\E_y\E_U \big(k\langle \row_i(A_U),y \rangle + \sum_{i_0\in I_0} \langle k_{ii_0} \row_{i_0}(A_U),y \rangle =0\big) \ge n^{-O_{C,\ep}(1)}.$$

By applying the Cauchy-Schwarz inequality, we obtain 

\begin{align*}
n^{-O_{C,\ep}(1)}&\le \left(\E_y \E_U(k\langle \row_i(A_U),y \rangle + \sum_{i_0\in I_0} \langle k_{ii_0} \row_{i_0}(A_U),y \rangle =0)\right)^2 \\
&\le  \E_y \left(\E_U(k\langle \row_i(A_U),y \rangle + \sum_{i_0\in I_0} \langle k_{ii_0} \row_{i_0}(A_U),y \rangle =0\right)^2 \\
&= \E_y \left(\E_{u}(\sum_{j=1}^n (ka_{ij}+ \sum_{i_0'\in I_0'} k_{ii_0'} a_{i_0'j}-\sum_{i_0''\in I_0''} k_{ii_0''} a_{i_0''j}) u_jy_j+  \sum_{j=1}^n \sum_{i_0''\in I_0''} k_{ii_0''} a_{i_0''j} y_j= 0)\right)^2\\
&\le \E_y \E_{u,u'}\big(\sum_{j=1}^n (k a_{ij}+\sum_{i_0'\in I_0'} k_{ii_0'} a_{i_0'j}-\sum_{i_0''\in I_0''} k_{ii_0''} a_{i_0''j}) (u_j-u_j')y_j= 0\big)\\
&= \E_z\big(\sum_{j=1}^n (ka_{ij}+\sum_{i_0'\in I_0'} k_{ii_0'} a_{i_0'j}-\sum_{i_0''\in I_0''} k_{ii_0''} a_{i_0''j})z_j =0\big) 
\end{align*}

where  $z_j:=(u_j-u_j')y_j$, and in the last inequality we used the simple observation that $\E_{u,u'}(f(u)=0,f(u')=0) \le \E_{u,u'}(f(u)-f(u')=0)$.

Note that $u_j-u_j'$ and $y_j$ are iid copies of $\eta^{1/2}$. Hence $z_j$ are iid copies of $\eta^{1/4}$.

Finally, by Theorem \ref{theorem:ILO:general}, the bound 

$$n^{-O_{C,\ep}(1)}\le \E_z\big(\sum_{j=1}^n (ka_{ij}+\sum_{i_0'\in I_0'} k_{ii_0'} a_{i_0'j}-\sum_{i_0''\in I_0''} k_{ii_0''} a_{i_0''j})z_j =0\big)$$

implies that there exists a proper symmetric GAP $Q_i$ of rank $O_{C,\ep}(1)$ and size $n^{O_{C,\ep}(1)}$ such that the following holds for all but at most $n'$ elements of $j$

$$ka_{ij}+ \sum_{i_0'\in I_0'} k_{ii_0'} a_{i_0'j}-\sum_{i_0''\in I_0''} k_{ii_0''} a_{i_0''j} \in Q_i.$$

We summarize below.

\begin{theorem}[Refined row relation]\label{theorem:quadratic:rowrelation} Let $\ep<1$ and $C$ be positive constants.
Assume that $\rho_q(A)\ge n^{-C}$. Then there exist a set $I_0$ of size $O_{C,\ep}(1)$, a set $I$ of size at least $n-2n^\ep$, a number $0\neq k=n^{O_{C,\ep}(1)}$ such that for any $i\in I$ there are integers $k_{ii_0},i_0\in I_0$, all bounded by $n^{O_{C,\ep}(1)}$, an index set $J_i$ of size $n-n^\ep$, and a proper symmetric GAP $Q_i$ of rank $O_{C,\ep}(1)$ and size $n^{O_{C,\ep}(1)}$ such that the following holds for all $j\in J_i$

$$ka_{ij} + \sum_{i_0\in I_0} k_{ii_0} a_{i_0j}\in Q_i.$$

\end{theorem}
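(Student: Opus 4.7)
The plan is to reduce to the bilinear setting via decoupling, apply the row-structure result from the previous section, and then symmetrize back to a linear Littlewood--Offord problem. First I would introduce a random partition by drawing $U \subset \{1,\dots,n\}$ with each index included independently with probability $1/2$ and form $A_U$ as in Lemma \ref{lemma:decoupling}. That lemma gives $\rho_q(A)^8 \le \E_U \P_{v,w}(\sum_{i,j} A_U(ij) v_i w_j = 0)$ with $v,w$ having iid $\eta^{1/2}$ entries, so $\rho_b^{(1/2)}(A_U) \ge n^{-8C}$ for a subfamily of $U$ of density $\Omega(n^{-8C})$. Applying Lemma \ref{lemma:rowtrullyorthogonal} to each such $A_U$ produces an index set $I_0(U)$ of size $O_{C,\ep}(1)$, a set $I(U)$ of near-full size, a nonzero integer $k(U)$ of polynomial size and integer coefficients $k_{ii_0}(U)$ of polynomial size such that $k(U)\row_i(A_U) + \sum_{i_0 \in I_0(U)} k_{ii_0}(U)\row_{i_0}(A_U)$ is annihilated by $y$ with probability $n^{-O_{C,\ep}(1)}$.

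Next, because the tuples $(I_0(U),k(U),\{k_{ii_0}(U)\})$ take only $n^{O_{C,\ep}(1)}$ values in total, iterated pigeonholing over $U$ fixes a common $(I_0,k,\{k_{ii_0}\})$ valid for $2^n/n^{O_{C,\ep}(1)}$ subsets $U$; collecting those indices $i$ that lie in $I(U)$ for at least half of these $U$ gives a set $I$ with $|I| \ge n - 2n^\ep$. Now fix $i \in I$ and partition $I_0 = I_0' \cup I_0''$ according to membership in $U$: a further pigeonhole restricts to a sub-family in which $I_0'' \subset U$, $I_0' \cap U = \emptyset$, and (say) $i \notin U$. Using the definition of $A_U$, the bilinear annihilation identity then becomes a linear equation in $y$ whose coefficient of $u_j y_j$ is precisely $c_{ij} := k a_{ij} + \sum_{i_0' \in I_0'} k_{ii_0'} a_{i_0'j} - \sum_{i_0'' \in I_0''} k_{ii_0''} a_{i_0''j}$, plus a $u$-independent tail. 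A Cauchy--Schwarz in $y$ followed by one in $u$, combined with the elementary bound $\E_{u,u'}(f(u)=0,f(u')=0) \le \E_{u,u'}(f(u)-f(u')=0)$, should yield
\[
\P_z\!\Bigl(\sum_j c_{ij} z_j = 0\Bigr) \ge n^{-O_{C,\ep}(1)},
\]
where $z_j = (u_j - u_j') y_j$ are iid copies of $\eta^{1/4}$. Theorem \ref{theorem:ILO:general} then furnishes a proper symmetric GAP $Q_i$ of rank $O_{C,\ep}(1)$ and size $n^{O_{C,\ep}(1)}$ containing all but $n^\ep$ of the $c_{ij}$; after absorbing the finitely many fixed values $a_{i_0''j}$ (with $i_0'' \in I_0''$) into the generators of $Q_i$ and relabeling signs, the conclusion becomes $k a_{ij} + \sum_{i_0 \in I_0} k_{ii_0} a_{i_0 j} \in Q_i$ as required.

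The main obstacle is the decoupling and symmetrization step: one must split $I_0$ cleverly with respect to $U$ so that the $u$-dependent part of the identity isolates the correct linear form $\sum_j c_{ij} u_j y_j$, and the two applications of Cauchy--Schwarz must produce iid $\eta^{1/4}$ variables to which the weighted inverse Littlewood--Offord theorem applies (which is why it is crucial that the decoupling step uses $\eta^{1/2}$ rather than $\pm 1$ coefficients). The surrounding bookkeeping -- keeping $k$ independent of $i$ while allowing $k_{ii_0}$ to depend on $i$, and making sure the nested pigeonhole passes do not shrink the family of usable $U$ below the $2^n / n^{O_{C,\ep}(1)}$ threshold -- is routine but needs to be carried out carefully so that all constants remain bounded in terms of $C$ and $\ep$.
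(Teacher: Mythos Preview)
Your approach is essentially the paper's, but one step as written does not go through. You propose to pigeonhole on the full tuple $(I_0(U),k(U),\{k_{ii_0}(U)\})$ before fixing $i$, claiming these take only $n^{O_{C,\ep}(1)}$ values. They do not: the collection $\{k_{ii_0}(U)\}$ ranges over all $i\in I(U)$, so there are $\Theta(n)\cdot|I_0|$ coefficients and hence $n^{\Theta(n)}$ possible tuples, which kills the pigeonhole. The correct order (and the one the paper uses) is: first pigeonhole only on $(I_0(U),k(U))$ to get a family $\mathcal U$ of size $2^n/n^{O_{C,\ep}(1)}$; then define $I$ as those $i$ lying in $I(U)$ for at least half of $U\in\mathcal U$; and only \emph{after} fixing $i\in I$ do you pigeonhole on the $O_{C,\ep}(1)$-tuple $(k_{ii_0}(U))_{i_0\in I_0}$, followed by the partition $I_0=I_0'\cup I_0''$. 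Your closing remark that $k_{ii_0}$ should depend on $i$ shows you have the right picture, but the argument must be ordered this way.

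Two smaller points. First, Lemma~\ref{lemma:decoupling} actually holds for \emph{every} fixed $U$ (no $\E_U$ is needed), so you may start with all $2^n$ sets rather than a subfamily of density $\Omega(n^{-8C})$; this only simplifies the bookkeeping. Second, at the end there is nothing to ``absorb'': the quantities $a_{i_0''j}$ depend on $j$ and cannot be added as generators of $Q_i$. All that is needed is to flip the sign of $k_{ii_0''}$ for $i_0''\in I_0''$ (this partition depends on $i$, which is fine since the $k_{ii_0}$ are allowed to), and the conclusion $ka_{ij}+\sum_{i_0\in I_0}k_{ii_0}a_{i_0j}\in Q_i$ follows directly.
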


Clearly, we may assume that $I\cap I_0=\emptyset$ by throwing away those $i$ from $I$ that also belong to $I_0$. 

Let $R$ be the matrix defined below.

\begin{definition}[row matrix]\label{definition:quadratic:R} 
$R$ is an $n$ by $n$ matrix, whose $i$-th row, where $i\in I$, is defined by 

\begin{equation}\label{eqn:quadratic:R}
\row_i(R)(j):= 
\begin{cases}
k_{ij}, & \text{ if $j\in I_0$;}\\ 
k, & \text{ if $j=i$;}\\
0, & \text{ otherwise}
\end{cases}
\end{equation}

The other entries of $R$ are zero except the diagonal terms which are set to be 1.
\end{definition}

We restate Theorem \ref{theorem:quadratic:rowrelation} in a more convenient way below.

\begin{theorem}[Refined row relation, again]\label{theorem:rowrelation:quadratic}
Let $\ep \le 1$ and $C$ be positive constants. Assume that $\rho_q(A)\ge n^{-C}$. Then there exist a set $I_0$ of size $O_{C,\ep}(1)$, a set $I$ of size at least $n-2n^\ep$  satisfying $I\cap I_0=\emptyset$, integers $0\neq k,k_{ii_0},i_0\in I_0,i\in I$, all bounded by $n^{O_{C,\ep}(1)}$, and a matrix $R$ defined by \eqref{eqn:quadratic:R} such that the matrix $A'=RA$ possess the following properties: for each $i\in I$, there exist  a subset $\row_i' \subset \row_i(A')$ of size $n-n^\ep$ and a proper symmetric GAP $Q_i$ of rank $O_{C,\ep}(1)$ and size $n^{O_{C,\ep}(1)}$ such that $\row_i' \subset Q_i$.
\end{theorem}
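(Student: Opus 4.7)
The statement is a matrix-level repackaging of Theorem~\ref{theorem:quadratic:rowrelation}, with no new analytic content. The data $I_0, I, k, k_{ii_0}, Q_i$ will be inherited verbatim from the earlier theorem (after ensuring $I\cap I_0=\emptyset$ by discarding at most $O_{C,\varepsilon}(1)$ indices from $I$, which does not affect its size estimate $n - O(n^\varepsilon)$). What remains is to unpack the matrix product $A' = RA$ and observe that its $(i,j)$-entry for $i\in I$ is precisely the linear combination featured in Theorem~\ref{theorem:quadratic:rowrelation}.

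\textbf{Step 1: unpack $RA$.} Fix $i \in I$. By Definition~\ref{definition:quadratic:R}, the only nonzero entries of $\row_i(R)$ are $R_{i,i_0} = k_{i i_0}$ for $i_0 \in I_0$ and $R_{i,i} = k$; these cases do not overlap because $I \cap I_0 = \emptyset$. Hence for every column index $j$,
\[
(RA)_{ij} \;=\; \sum_{\ell=1}^{n} R_{i\ell}\, A_{\ell j} \;=\; k\, a_{ij} \;+\; \sum_{i_0 \in I_0} k_{ii_0}\, a_{i_0 j}.
\]

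\textbf{Step 2: invoke the refined row relation.} Theorem~\ref{theorem:quadratic:rowrelation} furnishes, for each $i \in I$, an index set $J_i$ of size at least $n - n^\varepsilon$ and a proper symmetric GAP $Q_i$ of rank $O_{C,\varepsilon}(1)$ and size $n^{O_{C,\varepsilon}(1)}$ such that the expression above lies in $Q_i$ whenever $j \in J_i$. Define
\[
\row_i'\;:=\;\bigl\{(RA)_{ij} : j \in J_i\bigr\} \;\subset\; \row_i(A').
\]
Then $|\row_i'| \ge n - n^\varepsilon$ and $\row_i' \subset Q_i$, which is exactly the desired property.

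\textbf{Main obstacle.} There is essentially none: the statement is a dictionary translation, and all constants/parameters transfer directly from Theorem~\ref{theorem:quadratic:rowrelation}. The only point deserving a brief mention is the disjointness $I \cap I_0 = \emptyset$, which must hold in order for the row-$i$ definition of $R$ (for $i\in I$) to be compatible with the default specification (identity on the diagonal, zero elsewhere) for rows $i \notin I$; this is arranged by the one-line reduction in the paragraph preceding the restatement.
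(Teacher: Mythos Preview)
Your proposal is correct and matches the paper's treatment exactly: the paper presents this theorem as a direct restatement of Theorem~\ref{theorem:quadratic:rowrelation} via the matrix $R$, handling the disjointness $I\cap I_0=\emptyset$ in one line by discarding the $O_{C,\ep}(1)$ overlapping indices, and your computation of $(RA)_{ij}$ is the only unpacking needed.
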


Next, because $A$ is symmetric, we obtain a similar relation between the columns of $A$. Hence, we obtain the following key result.

\begin{theorem}[Matrix relations]\label{theorem:matrixrelation:quadratic}
Let $\ep \le 1$ and $C$ be positive constants. Assume that $\rho_q(A)\ge n^{-C}$. Then there exist a set $I_0$ of size $O_{C,\ep}(1)$, a set $I$ of size at least $n-2n^\ep$ satisfying $I\cap I_0=\emptyset$, integers $0\neq k,k_{ii_0},i_0\in I_0,i\in I$, all bounded by $n^{O_{C,\ep}(1)}$, and a matrix $R$ defined by \eqref{eqn:quadratic:R} such that the matrix $A'=RAR^T$ possess the following properties.

\begin{itemize}

\item For each $i\in I$, there exist  a subset $\row_i' \subset \row_i(A')$ of size $n-n^\ep$ and a proper symmetric GAP $Q_i$ of rank $O_{C,\ep}(1)$ and size $n^{O_{C,\ep}(1)}$ such that $\row_i' \subset Q_i$.

\vskip .1in

\item For each $j\in I$, there exist a subset $\col_j' \subset \col_j(A')$ of size $n-n^\ep$ and a proper symmetric GAP $P_j$ of rank $O_{C,\ep}(1)$ and size $n^{O_{C,\ep}(1)}$ such that $\col_j' \subset P_j$.

\end{itemize}
\end{theorem}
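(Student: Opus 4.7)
The plan is to bootstrap from Theorem \ref{theorem:rowrelation:quadratic}, which already supplies a matrix $R$ and proper symmetric GAPs $Q_i$ capturing most entries of each row of $RA$ with $i\in I$, and to show that the further right-multiplication by $R^T$ preserves this row property after a modest enlargement of the $Q_i$'s. Once the row property is established for $A'=RAR^T$, the column property comes for free: $A$ being symmetric forces $(A')^T = R A^T R^T = R A R^T = A'$, so $\col_j(A')$ and $\row_j(A')$ consist of the same sequence of entries and one may simply take $P_j := Q_j$.

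Concretely, I would set $B:=RA$ and invoke Theorem \ref{theorem:rowrelation:quadratic} to obtain $I_0$, $I$, the integers $k$, $k_{ii_0}$, the matrix $R$, and, for every $i\in I$, a proper symmetric GAP $Q_i$ of rank $O_{C,\ep}(1)$ and size $n^{O_{C,\ep}(1)}$ with $B_{ij}\in Q_i$ for all $j$ in an index set $J_i$ of size at least $n-n^\ep$. Using the explicit shape of $R$ from \eqref{eqn:quadratic:R} -- whose row $j\in I$ consists of the entry $k$ at position $j$, entries $k_{jl}$ at positions $l\in I_0$, and zeros elsewhere -- the entries of $A'=BR^T$ with $j\in I$ expand as
$$(BR^T)_{ij} \;=\; kB_{ij} + \sum_{l\in I_0} k_{jl}B_{il}.$$
For a fixed $i\in I$ the $|I_0|=O_{C,\ep}(1)$ numbers $B_{il}$, $l\in I_0$, are just constants (independent of $j$), while the integer weights $k_{jl}$ have magnitude $n^{O_{C,\ep}(1)}$. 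The first term $kB_{ij}$ lies in the scaled GAP $kQ_i$ whenever $j\in J_i$, while the second term ranges over the symmetric set generated by the $B_{il}$, $l\in I_0$, with integer coefficients bounded by $n^{O_{C,\ep}(1)}$. Adjoining these $|I_0|$ extra generators to $kQ_i$ yields a symmetric (possibly non-proper) GAP $Q_i'$ of rank $O_{C,\ep}(1)$ and size $n^{O_{C,\ep}(1)}$ that contains $(A')_{ij}$ for every $j\in I\cap J_i$, a set of cardinality at least $n-3n^\ep$. Lemma \ref{lemma:embeding} embeds $Q_i'$ into a proper symmetric GAP of the same rank and comparable size, and shrinking $\ep$ by a constant factor restores the bound $n-n^\ep$ as in the statement.

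The main (and rather minor) obstacle is bookkeeping: one must verify that attaching $O_{C,\ep}(1)$ new generators with scale factors of size $n^{O_{C,\ep}(1)}$ preserves both the rank and size bounds, and that the subsequent application of Lemma \ref{lemma:embeding} keeps the resulting GAP symmetric. Both points follow from the standard estimates of Section \ref{appendix:fullrank}. With the row property in hand, the column property requires no additional argument beyond the observation that $A'$ is symmetric.
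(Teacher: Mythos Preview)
Your argument is correct and follows essentially the same route as the paper. The paper's proof is the single sentence ``because $A$ is symmetric, we obtain a similar relation between the columns of $A$,'' after which the bilinear computation of Section~\ref{section:proof:bilinearILO} (expanding $(LAR)_{ij}$ and absorbing the $O_{C,\ep}(1)$ extra terms as new generators) is implicitly repeated with $L=R$; your explicit expansion $(BR^T)_{ij}=kB_{ij}+\sum_{l\in I_0}k_{jl}B_{il}$ is exactly that computation. Your observation that $A'=RAR^T$ is itself symmetric, so that the column conclusion is literally the row conclusion with $P_j:=Q_j$, is a slight streamlining over the paper's tacit appeal to the two-sided bilinear argument, but the content is the same.
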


We complete the proof of Theorem \ref{theorem:ILO:quadratic:differentGAP} by using \eqref{eqn:aa'}, and Theorem \ref{theorem:ILO:quadratic:commonGAP} by using Lemma \ref{lemma:commonGAP}, noting that $k_{ij}=k_{ji}$ and $a_{ii_0}=a_{i_0i}$.

\begin{remark}
In later application we will not need the whole strength of Theorem \ref{theorem:matrixrelation:quadratic}. It will suffice to apply Theorem \ref{theorem:rowrelation:quadratic}.
\end{remark}

\section{proof of Lemma \ref{lemma:normalvector:linear}}\label{section:normalvector:linear}

We now prove Lemma \ref{lemma:normalvector:linear} by using our inverse Littlewood-Offord result for linear forms presented in Section \ref{section:linearILO}.

First of all, because $\rank(M_{n-1})=n-2$, the cofactor matrix $(a_{ij})$ of $M_{n-1}$ has rank $1$. Because this matrix is symmetric,  each entry $a_{ij}$ must have the form $a_{i}a_{j}$, where not all the $a_{i}$ are zeros. 

We will show that the vector $u=(a_1,\dots,a_{n-1})$ satisfies the conclusions of Lemma \ref{lemma:normalvector:linear}. 

Observe that 

$$\det(M_n) = \sum_{1\le i,j \le n-1} a_{ij}x_ix_j = (\sum_{i=1}^{n-1}a_ix_i)^2.$$ 

Thus the assumption $\P(\det(M_n)=0|M_{n-1})\ge n^{-C}$ implies that 

$$\P(\sum_{i=1}^{n-1}a_ix_i=0|M_{n-1})\ge n^{-C}.$$

By Theorem \ref{theorem:ILO}, all but $n^\ep$ elements of $a_i$ belong to a proper symmetric GAP of rank $O_{C,\ep}(1)$ and size $n^{O_{C,\ep}(1)}$. Also, by the definition of the $a_i$, $u=(a_1,\dots,a_{n-1})$ is orthogonal to $n-2$ linearly independent rows of $M_{n-1}$. We finish the proof of Lemma \ref{lemma:normalvector:linear} by using the following lemma.

\begin{lemma}[Rational commensurability]\label{lemma:rationalcommensurability}
Let $v=(v_1,\dots, v_{n-1})$ be a vector such that all but $n^\ep$ components $v_i$ belong to a proper symmetric GAP of rank $O_{C,\ep}(1)$ and size $n^{O_{C,\ep}(1)}$, and that $v$ is a normal vector of a hyperplane spanned by vectors of integral components bounded by $n^{O_{C,\ep}(1)}$. Then $\{v_1,\dots,v_{n-1}\} \subset \{(p/q)v_{i_0}, |p|,|q|=n^{O_{C,\ep}(n^\ep)}\}$ for some $i_0$. \end{lemma}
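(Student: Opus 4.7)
The plan is to combine the bounded GAP structure of the good coordinates with the integer kernel constraint to reduce the problem to a short linear system over $\Z$, then extract the required rational ratios by Cramer's rule applied to that reduced system. Let $I$ denote the good set (of size $\ge n-1-n^\ep$) on which $v_i=\sum_{l=1}^{r} m_{i,l} g_l$ lies in the GAP, with $r=O_{C,\ep}(1)$ and integer $|m_{i,l}|\le N = n^{O_{C,\ep}(1)}$; let $B=\{1,\dots,n-1\}\setminus I$, so $|B|\le n^\ep$. Let $W$ be the $(n-2)\times(n-1)$ integer matrix of full row rank whose rows span the given hyperplane, with entries bounded by $M=n^{O_{C,\ep}(1)}$; then $\ker W$ is one-dimensional and contains $v$, so every ratio $v_i/v_j$ is automatically rational.

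If $v|_I\equiv 0$, then $v$ is supported on $B$ and spans the one-dimensional kernel of the $(n-2)\times|B|$ submatrix $W|_B$; Cramer's rule on any maximal nonsingular $(|B|-1)\times(|B|-1)$ submatrix combined with Hadamard gives ratios $v_j/v_{j_0}$ bounded by $n^{O_{C,\ep}(n^\ep)}$, which is the desired conclusion. Otherwise, fix any $i_1\in I$ with $v_{i_1}\ne 0$; by Theorem \ref{theorem:fullrank} the coefficient vectors $(m_i)_{i\in I}$ span $\R^r$, so $i_1$ extends to $i_1,\dots,i_r\in I$ whose $m$-vectors form a nonsingular integer matrix $\widetilde M$ of size $N$. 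Cramer on $\widetilde M^{-1}$ yields the algebraic identity $v_i=(\det\widetilde M)^{-1}\sum_k p_{ik}v_{i_k}$ for each $i\in I$, with integers $p_{ik}$ of size $n^{O_{C,\ep}(1)}$. Substituting this into $Wv=0$ and clearing $\det\widetilde M$ gives the $(n-2)\times(r+|B|)$ integer linear system
\[
\sum_{k=1}^{r}\Big(\sum_{j\in I}W_{lj}p_{jk}\Big)v_{i_k}\;+\;\det\widetilde M\cdot\sum_{j\in B}W_{lj}v_j\;=\;0,\qquad 1\le l\le n-2,
\]
in the unknowns $(v_{i_k})_k$ and $(v_j)_{j\in B}$, with all entries bounded by $n^{O_{C,\ep}(1)}$. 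Since the extension map from these $r+|B|\le n^\ep+O(1)$ unknowns into $\R^{n-1}$ (via the same algebraic identity on $I$) is injective and carries solutions into $\ker W$, the reduced system has one-dimensional kernel. Applying Cramer's rule to an $(r+|B|-1)\times(r+|B|-1)$ nonsingular submatrix and using Hadamard yields an integer kernel generator whose coordinates are bounded by $n^{O_{C,\ep}(n^\ep)}$. Taking $i_0=i_1$ therefore makes $v_{i_k}/v_{i_1}$ and $v_j/v_{i_1}$ rationals $p/q$ with $|p|,|q|\le n^{O_{C,\ep}(n^\ep)}$; for the remaining $i\in I$ one uses the algebraic identity above and clears denominators to get the same bound for $v_i/v_{i_1}$.

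The main technical care is the kernel-dimension bookkeeping for the reduced system — to invoke Cramer's rule one must verify that the reduced kernel is exactly one-dimensional, so that a unique-up-to-sign integer generator of bounded height can be extracted, and this is delivered by injectivity of the extension map into $\R^{n-1}$ combined with the one-dimensionality of $\ker W$. A minor but necessary observation is that in the non-degenerate case an $i_1$ with $v_{i_1}\ne 0$ can always be completed to a basis of the full-rank span of the coefficient vectors, because $v_{i_1}\ne 0$ forces $m_{i_1}\ne 0$.
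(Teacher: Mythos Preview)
Your proof is correct and follows essentially the same strategy as the paper: express $v$ through $r+|B|\le O(1)+n^\ep$ parameters, feed the integer orthogonality constraints into a small linear system in those parameters, and extract bounded-height rational ratios via Cramer and Hadamard on an $(r+n^\ep)$-sized subsystem. The only cosmetic differences are that the paper parametrises directly by the GAP generators $g_1,\dots,g_r$ (rather than by $r$ distinguished coordinates $v_{i_1},\dots,v_{i_r}$ and inverting $\widetilde M$), and it avoids your case split and kernel-dimension argument by augmenting the $(n-2)$ orthogonality rows with a single row $e_{i_0}$ chosen linearly independent of them (which automatically forces $v_{i_0}\ne 0$), yielding an inhomogeneous system $M(\tfrac{1}{v_{i_0}}u)^T=x$ that can be solved directly.
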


\begin{proof}(of Lemma \ref{lemma:rationalcommensurability})
Without loss of generality, we assume that $(v_{n-n^\ep},\dots, v_{n-1})$ are the exceptional elements that may not belong to the GAP. 

For each $v_i$, where $i< n-n^\ep$, there exist numbers $v_{ij}$, all bounded by $n^{O_{C,\ep}(1)}$, such that 

$$v_i = v_{i1}g_1+\dots v_{ir}g_r,$$ 

where $g_1,\dots, g_r$ are the generators of the GAP.

Note that by Theorem \ref{theorem:fullrank}, one may assume that the vectors $(v_{i1},\dots,v_{ir})$, where $i<n-n^\ep$, generate the whole space $\R^r$. 

Consider the $n-1$ by $r+n^\ep$ matrix $M_v$ whose $i$-th row is the vector $(v_{i1},\dots,v_{ir},0,\dots,0)$ if $i< n-n^\ep$, and $(0,\dots,0,1,0,\dots,0)$ if $n-n^\ep\le i$. Note that $M_v$ has rank $r+n^\ep$. 

We thus have  

$$v^T=M_v \cdot u^T,$$

where $u=(g_1,\dots,g_r,v_{n-n^\ep},\dots,v_{n-1})$.

Next, let $w_1,\dots,w_{n-2}$ be the vectors of integral entries bounded by $n^{O_{C,\ep}(1)}$ which are orthogonal to $v$. We form an $n-1$ by $n-1$ matrix $M_w$ whose $i$-th row is $w_i$ for $i\le n-2$, and the $n-1$-th row is $e_{i_0}$, a unit vector among the standard basis $\{e_1,\dots, e_{n-1}\}$ that is linearly independent to $w_1,\dots.w_{n-2}$. 

By definition, we have $M_w v^T = (0,\dots,0,v_{i_0})^T$, and hence 

$$(M_wM_v) u^T =(0,\dots,0,v_{i_0})^T.$$ 

The identity above implies that

\begin{equation}\label{eqn:smallvector:1}
(M_{w}M_{v}) (\frac{1}{v_{i_0}}u)^T =(0,\dots,0,1)^T.
\end{equation}

Next we choose a submatrix $M$ of size $r+n^\ep$ by $r+n^\ep$ of $M_{w}M_{v}$ thas has full rank. Then

\begin{equation}\label{eqn:smallvector:2}
M  (\frac{1}{v_{i_0}}u)^T = x
\end{equation}

for some $x$ which is a subvector of $(0,\dots,0,1)$ from \eqref{eqn:smallvector:1}.

Observe that the entries of $M$ are integers bounded by $n^{O_{C,\ep}(1)}$. Hence, the entries of $M^{-1}$ are fractions whose numerators and denominators are integers bounded by $(n^{O_{C,\ep}(1)})^{r+n^\ep}=n^{O_{C,\ep}(n^\ep)}$.

Solving for $ g_i/v_{i_0}$ and $v_{j}/v_{i_0}$ from \eqref{eqn:smallvector:2}, we conclude that each of these components can be written in the form $p/q$, where $|p|,|q|=n^{O_{C,\ep}(n^\ep)}$. 

\end{proof}

\begin{remark}
In principle, Lemma \ref{lemma:rationalcommensurability} is similar to Theorem 5.2 of \cite{TVsing}.
\end{remark}

\section{Proof of Lemma \ref{lemma:normalvector:quadratic}}\label{section:normalvector:quadratic}

In this section we will apply the results from Section \ref{section:quadraticILO} and Section \ref{section:proof:quadraticILO} to prove Lemma \ref{lemma:normalvector:quadratic}.

First, assume that 

$$\P_x(\sum_{ij}a_{ij}x_ix_j=0|M_{n-1})\ge n^{-C}.$$ 

Let $A$ be the matrix $(a_{ij})$. Then by Theorem \ref{theorem:ILO:quadratic:differentGAP} (or more explicitly, Theorem \ref{theorem:rowrelation:quadratic}), there exists a non-singular matrix $R$ (see Definition \ref{definition:quadratic:R}) such that most of the entries of each row of $A'$ belong to proper symmetric GAPs of small ranks and small sizes, where $A'=RA$.  

Set 

$$M:= M_{n-1}R^{-1}.$$

Because $M_{n-1}A = \det(M_{n-1})\cdot I_{n-1} \neq 0$, we have

\begin{equation}\label{eqn:quadraticILO:proof:1}
MA'=  M_{n-1}A= \det(M_{n-1}) \cdot I_{n-1} \neq \mathbf{O}.
\end{equation}

Next, it follows from the definition of $R$ that 

\[R^{-1}(ij)= 
\begin{cases}
1/k & \text{ if $i\in I$ and $j=i$};\\
-k_{ij}/k & \text{ if $i\in I$ and $j\in I_0$};\\
1 & \text{ if $i\notin I$ and $j=i$};\\
0 & \text{ if $i\notin I$ and $j\neq i$}.
\end{cases}
\]

We thus have, by $M(ij)= \sum_{j'} M_{n-1}(ij')(R^{-1})(j'j)$, that

\begin{eqnarray}
M(ij_0) &=& \sum_{j'\in I} M_{n-1}(ij')(-k_{j'j_0}/k)+M_{n-1}(ij_0), \mbox{ if } j_0\in I_0; \nonumber \\
M(ij)   &=& M_{n-1}(ij)/k, \mbox{ if } j\in I; \nonumber \\ 
\nonumber \\
M(ij)   &=& M_{n-1}(ij), \mbox{ if } j\notin I_0\cup I.
\end{eqnarray}

Because the entries of $M_{n-1}$ are $\pm 1$, and $k_{ii_0}=n^{O_{C,\ep}(1)}$, the entries of $M$ are rational numbers of the form $m/k$, where $m\in \Z$ and $m=n^{O_{C,\ep}(1)}$. Furthermore, note that $M$ also has full rank. 

Let $v$ be any column of $A'$ whose all but $n^\ep$ components belong to a proper symmetric GAP of rank $O_{C,\ep}(1)$ and size $n^{O_{C,\ep}(1)}$.

Because $MA' = \det(M_{n-1})\cdot I_n \neq \mathbf{0}$, $v$ is not a zero vector which is orthogonal to $n-2$ rows of $M$. Hence, it follows from Lemma \ref{lemma:rationalcommensurability} that  $\{v_1,\dots,v_{n-1}\} \subset \{(p/q)v_{i_0}, |p|,|q|=n^{O_{C,\ep}(n^\ep)}\}$ for some $i_0$.

Next, consider a row $\row_i(M)$ that is orthogonal to $v$, where $i\in I$. Note that there are at least $|I|-1 \ge n-2n^\ep-1$ such indices $i$. We have 

\begin{eqnarray}\label{eqn:normalvector:quadratic:1}
&&\sum_{j}M(ij)v_j = \sum_{j_0\in I_0}M(ij_0)v_{j_0}+\sum_{j\in I}M(ij)v_j + \sum_{j\notin I_0\cup I} M(ij)v_j \nonumber \\
&=&-\sum_{j_0\in I_0}\sum_{j'\in I}M_{n-1}(ij')k_{j'j_0}v_{j_0}/k+\sum_{j_0\in I_0}M_{n-1}(ij_0)v_{j_0}+\sum_{j\in I}M_{n-1}(ij)v_j/k + \sum_{j\notin I_0\cup I}M_{n-1}(ij)v_j \nonumber \\
&=& \sum_{j'\in I}M_{n-1}(ij')(-\sum_{j_0\in I_0}k_{j'j_0}v_{j_0}/k)+\sum_{j_0\in I_0} M_{n-1}(ij_0)v_{j_0}+ \sum_{j\in I}M_{n-1}(ij)v_j/k+\sum_{j_0\notin I_0\cup I}M_{n-1}(ij)v_j \nonumber \\
&=& \sum_{j\in I}M_{n-1}(ij)(v_j/k-\sum_{j_0\in I_0}k_{jj_0}v_{j_0}/k)+\sum_{j_0\in I_0}M_{n-1}(ij_0)v_{j_0}+\sum_{j\notin I_0\cup I}M_{n-1}(ij)v_j \nonumber \\
&=& 0.
\end{eqnarray}

Define

\[ u_j:=
\begin{cases}
v_j  & \text{ if $j\notin I$} ;\\\\
v_j/k -\sum_{j_0\in I_0} k_{jj_0}v_{j_0}/k & \text{ if $j\in I$}.
\end{cases}
\]

It then follows from \eqref{eqn:normalvector:quadratic:1} that 

$$\sum_{j}M_{n-1}(ij)u_j=0.$$

Thus, the vector $u=(u_1,\dots,u_{n-1})$ is orthogonal to $\row_i(M_{n-1})$. This holds for at least $n-2n^\ep-1$ rows of $M_{n-1}$.

Additionally, by the definition of $u$ and $v$,  all but $n^\ep$ coordinates of $u$ belong to a proper symmetric GAP of rank $O_{C,\ep}(1)$ and size $n^{O_{C,\ep}(1)}$ (with probably worse parameters), and  $\{u_1,\dots,u_{n-1}\} \subset \{(p/q)u_{j_0}, |p|,|q|=n^{O_{C,\ep}(n^\ep)}\}$ for some $j_0$. 

We conclude the proof by noting that, because $v$ is not a zero vector, $u$ is not either.

\section{proof of Theorem \ref{theorem:singularity:case1} and Theorem \ref{theorem:singularity:case2}}\label{section:singularity:completion}

Assume that $M_{n-1}$ has rank $n-2$ or $n-1$, and $\P(\det(M_n)=0|M_{n-1}) \ge n^{-C}$. We apply Lemma \ref{lemma:normalvector:linear} and \ref{lemma:normalvector:quadratic} to obtain a vector $u=(u_1,\dots,u_{n-1})$ of the following properties. 

\begin{enumerate}
\item All but $n^\ep$ elements of $u_i$ belong to a proper symmetric GAP of rank $O_{C,\ep}(1)$ and size $n^{O_{C,\ep}(1)}$. 

\vskip .1in

\item $u_i \in \{p/q: |p|,|q|=n^{O_{C,\ep}(n^\ep)}\}$ for all $i$.

\vskip .1in

\item $u$ is orthogonal to $n-O_{C,\ep}(n^\ep)$ rows of $M_{n-1}$.
\end{enumerate}

Let $\mathcal{P}$ denote the collection of all $u$ having property (1) and (2). For each $u\in \mathcal{P}$, let $\P_u$ be the probability, with respect to $M_{n-1}$, that $u$ is orthogonal to $n-O_{C,\ep}(n^\ep)$ rows of $M_{n-1}$. We shall prove the following key result.

\begin{theorem}\label{theorem:total} We have
$$\sum_{u\in \mathcal{P}}\P_u = O_{C,\ep}((1/2)^{(1-o(1))n}).$$
\end{theorem}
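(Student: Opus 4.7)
The plan is to decompose $\mathcal P$ according to the underlying proper symmetric GAP provided by property (1) and bound $\P_u$ for each fixed $u$ by sequentially exposing rows of $M_{n-1}$ and applying Hal\'asz-type concentration estimates. For each admissible GAP $Q$ of rank $r = O_{C,\ep}(1)$, size $N = n^{O_{C,\ep}(1)}$, and generators in the controlled rational set from property (2), the number of $u \in \mathcal{P}$ with non-exceptional coordinates in $Q$ is at most $N^n \cdot 2^{o(n)}$ (after absorbing the $\binom{n-1}{n^\ep} \cdot n^{O_{C,\ep}(n^{2\ep})}$ choices for the $n^\ep$ exceptional coordinates). The number of admissible $Q$'s is itself $n^{O_{C,\ep}(n^\ep)} = 2^{o(n)}$, so it suffices to bound $\sum_{u \in \mathcal{P}_Q} \P_u$ for each $Q$ by $(1/2)^{(1-o(1))n} \cdot 2^{o(n)}$.

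For a fixed $u$, the event defining $\P_u$ is a union over subsets $T \subseteq [n-1]$ of size $|T| = k := O_{C,\ep}(n^\ep)$, of the events $\{u \perp \row_i(M_{n-1}),\, \forall i \in T^c\}$. I would expose the rows of $M_{n-1}$ in the order $i = 1, 2, \ldots, n-1$, with the exceptional indices moved to the end. After exposing rows $1, \ldots, i-1$, the new random variables in row $i$ are the entries $\{M_{n-1}(i,j) : j \ge i\}$ (the entries with $j < i$ being determined by symmetry), and the orthogonality constraint becomes a linear equation $\sum_{j \ge i} M_{n-1}(i,j) u_j = c_i$ with $c_i$ determined by the conditioning. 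The conditional probability is at most $\rho(u_i, u_{i+1}, \ldots, u_{n-1})$. Multiplying over $i \in T^c$ and union-bounding over $T$ at a cost of $\binom{n-1}{k} = 2^{o(n)}$,
\[
\P_u \le 2^{o(n)} \cdot \prod_{i=1}^{n-1-k} \rho(u_i, \ldots, u_{n-1}).
\]

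The crucial input is a sharp upper bound on $\rho(u_i, \ldots, u_{n-1})$: since most of these coordinates lie in the GAP $Q$ of rank $r$ and size $N$, a Hal\'asz-type estimate (Theorem \ref{theorem:Hal}) adapted to the GAP structure yields $\rho(u_i, \ldots, u_{n-1}) \le O(N^{-1} \cdot (n-i)^{-(r-1)/2})$ whenever the tail contains $\Omega(n-i)$ GAP elements. Multiplying these estimates, $\P_u \le N^{-n + o(n)} \cdot n^{-(r-1)n/2 + o(n)}$. Combined with the count in the first step,
\[
\sum_{u \in \mathcal{P}_Q} \P_u \le N^n \cdot 2^{o(n)} \cdot N^{-n + o(n)} \cdot n^{-(r-1)n/2 + o(n)} = n^{-(r-1)n/2} \cdot 2^{o(n)},
\]
which for $r \ge 2$ gives the desired $(1/2)^{(1-o(1))n}$ bound after summing over the $2^{o(n)}$ admissible GAPs.

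The main obstacle is the rank-$1$ case ($r = 1$), where the Hal\'asz bound degrades to $\rho \le O(N^{-1})$ and the product $\P_u \le N^{-n} \cdot 2^{o(n)}$ only cancels the count $N^n$, yielding $2^{o(n)}$ rather than $2^{-n}$. To handle this, I would exploit the sharper S\'ark\H{o}zy-Szemer\'edi estimate $\rho(u) \le O(n^{-3/2})$ available when the tail of $u$ contains $\Omega(n)$ distinct values, separately treating the subfamily of $u$'s with many repeated coordinates by noting that these reduce to a GAP of strictly smaller effective size, permitting a recursive argument. Alternatively, one could use a decoupling argument analogous to Lemma \ref{lemma:decoupling} together with the symmetry of $M_{n-1}$ to gain additional exponential decay in the per-$u$ probability bound. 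In either case, the rationality property (2) plays a role in trimming the enumeration for rank-$1$ GAPs by forcing the common difference to belong to a set of size $2^{o(n)}$.
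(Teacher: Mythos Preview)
Your proposal has a genuine gap at the central step: the claimed Hal\'asz-type upper bound
\[
\rho(u_i,\dots,u_{n-1}) \le O\bigl(N^{-1}(n-i)^{-(r-1)/2}\bigr)
\]
does not follow from the hypothesis that most coordinates lie in a GAP $Q$ of rank $r$ and size $N$. Containment in a GAP gives a \emph{lower} bound on $\rho$ (Example~\ref{example:linear:GAP}), not an upper bound. For a concrete counterexample, take $Q=\{-N,\dots,N\}$ (rank $1$, size $2N+1$) and let $u=(1,1,\dots,1,N)$; the minimal rank-$1$ GAP containing all coordinates is $Q$, yet $\rho(u)\asymp n^{-1/2}$, which can be far larger than $N^{-1}$. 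More generally, nothing in your setup prevents many of the $u_j$ from coinciding, in which case $\rho$ is governed by Erd\H{o}s' bound $O(n^{-1/2})$ regardless of $N$ and $r$. Since the count $N^n$ and the probability bound $N^{-n}$ are supposed to cancel, this miscalibration is fatal: the product $\sum_u \P_u$ can be as large as $N^n\cdot n^{-n/2}$, which is useless when $N\gg n^{1/2}$.

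The paper's proof works in the opposite direction. Rather than fixing $Q$ and trying to bound $\rho_i$, it first records the entire profile $(\rho_1(u),\dots,\rho_{(1-\delta)n_0}(u))$ via a \emph{concentration sequence} $(m_1,\dots,m_K)$ indicating how many $\rho_i$ fall in each dyadic-type window $[b_{j-1},b_j)$. For each window it then invokes the inverse theorem (Theorem~\ref{theorem:ILO}) to produce a GAP $Q_j$ of size $O(b_j^{-1}n^{\delta}/n^{\ep/2})$ containing the relevant tail of $u$; the count of admissible $u$'s within that window is then $|Q_j|^{m_j}$, and by construction this cancels against $\prod b_j^{m_j}$ with a net gain of $n^{-\ep n_0/8}$. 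This is precisely the balancing you were hoping for, but it only works because the GAP is chosen \emph{after} $\rho_i$ is known, not before.

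A second omission: you do not treat the case where $u$ has few nonzero components ($n_u\le n^{1-\delta}$). There your row-by-row exposure breaks down because once $i$ exceeds $n_u$ the tail $(u_i,\dots,u_{n-1})$ may be identically zero and $\rho_i=1$, so the product $\prod\rho_i$ gives no decay at all. The paper handles this regime separately (Theorem~\ref{theorem:total:manyzeros}) via Odlyzko's lemma, obtaining $\P_u\le 2^{2n_u-n}$ directly from a rank argument without any appeal to concentration.
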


It is clear that Theorem \ref{theorem:singularity:case1} and Theorem \ref{theorem:singularity:case2} follow from Theorem \ref{theorem:total}.

In the sequel we will choose $0<\delta$ to be small enough so that $\delta \cdot O_{C,\ep}(1)\le \ep/4$ for all constants $O_{C,\ep}(1)$ appearing in Lemma \ref{lemma:normalvector:linear} and Lemma \ref{lemma:normalvector:quadratic}. 

Let $n_u$ denote the number on nonzero components of $u$.  To prove Theorem \ref{theorem:total} we decompose the sum $\sum_{u\in \mathcal{P}}\P_u $ into two parts depending on the magnitude of $n_u$.

\begin{theorem}\label{theorem:total:manyzeros}
The probability of a random symmetric matrix $M_{n-1}$ having $n-O_{C,\ep}(n^\ep)$ rows being orthogonal to a vector $u\in \mathcal{P}$ having $n_u\le n^{1-\delta}$ is bounded by 

$$\sum_{u\in \mathcal{P}, n_u\le n^{1-\delta}}\P_u =O\left((1/2)^{(1-o(1))n}\right),$$

where the implied constants depend on $C,\ep$ and $\delta$. 
 
\end{theorem}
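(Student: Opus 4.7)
The plan is to bound $\Pr_u$ for each fixed $u$ by exploiting independence across off-support rows, and then to count vectors $u \in \mathcal{P}$ with small support using the additive structure given by property~(1) of $\mathcal{P}$. Fix $u$ with support $S = \{j : u_j \neq 0\}$ of size $n_u \leq n^{1-\delta}$. For any row index $i \notin S$, the event $\row_i(M_{n-1}) \perp u$ depends only on $(M_{n-1}(i,j))_{j\in S}$. Since $i \notin S$ and $j \in S$ forces $i \neq j$, these entries correspond via the symmetry $M_{n-1}(i,j)=M_{n-1}(j,i)$ to distinct upper-triangular positions of $M_{n-1}$. In particular, for two different off-support rows $i, i' \notin S$, the vectors $(M_{n-1}(i,j))_{j\in S}$ and $(M_{n-1}(i',j))_{j\in S}$ are independent iid Bernoulli vectors in $\{\pm 1\}^{n_u}$. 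Odlyzko's lemma applied to $u|_S \neq 0 \in \R^{n_u}$ shows that each such row is orthogonal to $u$ with probability at most $1/2$. Since at most $O(n^\eps)$ of the $n-1$ rows of $M_{n-1}$ fail to be orthogonal, at least $n - n_u - O(n^\eps)$ of the $n-1-n_u$ off-support rows must be orthogonal, and a standard binomial tail bound gives
\begin{equation*}
\Pr_u \;\leq\; \binom{n-1-n_u}{O(n^\eps)} \left(\tfrac{1}{2}\right)^{n-n_u-O(n^\eps)} \;\leq\; n^{O(n^\eps)} \left(\tfrac{1}{2}\right)^{n-n_u-O(n^\eps)}.
\end{equation*}

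The delicate step is to count vectors $u \in \mathcal{P}$ with $n_u \leq n^{1-\delta}$. Property~(2) alone is far too weak: it allows each $u_i$ to take $n^{O(n^\eps)}$ values, giving roughly $2^{\Theta(n^{1+\eps-\delta}\log n)}$ vectors, which in the regime $\delta \leq \eps/4$ adopted by the paper exceeds $2^n$. Instead I exploit property~(1) crucially. I first count the proper symmetric GAPs $Q$ of rank $O_{C,\eps}(1)$ and size $n^{O_{C,\eps}(1)}$ that can serve $u$: invoking Theorem~\ref{theorem:fullrank} we may assume $u|_S$ spans $Q$, which via Cramer's rule expresses each generator of $Q$ as a rational combination of the $u_j$'s, so by property~(2) each generator itself lies in $\{p/q : |p|,|q| \leq n^{O(n^\eps)}\}$; since rank is $O(1)$ and the dimensions are at most $n^{O(1)}$, the number of GAPs is $n^{O(n^\eps)}$. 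For each fixed $Q$, the non-exceptional entries of $u$ lie in $Q$ and contribute at most $|Q|^{n_u} \leq n^{O(n_u)}$ choices, the support is chosen in $\binom{n-1}{n_u} \leq n^{n_u}$ ways, and the at most $n^\eps$ exceptional entries together with their values (from the full set of size $n^{O(n^\eps)}$) contribute $n^{n^\eps} \cdot n^{O(n^{2\eps})}$. Summing over $n_u \leq n^{1-\delta}$ the total count is at most $2^{O((n^{1-\delta} + n^{2\eps})\log n)} = 2^{o(n)}$ provided $\eps < 1/2$ and $\delta > 0$.

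Multiplying the per-$u$ bound by the $u$-count and absorbing the harmless factor $(1/2)^{-n_u} \leq 2^{n^{1-\delta}}$ into the $o(n)$ error gives
\begin{equation*}
\sum_{u \in \mathcal{P},\, n_u \leq n^{1-\delta}} \Pr_u \;\leq\; 2^{o(n)} \left(\tfrac{1}{2}\right)^n \;=\; \left(\tfrac{1}{2}\right)^{(1-o(1))n},
\end{equation*}
as required. The main obstacle is the GAP counting step: one must carefully justify that the proper symmetric GAPs produced by Lemma~\ref{lemma:normalvector:linear} and Lemma~\ref{lemma:normalvector:quadratic} can be drawn from a subexponential family, which means combining the polynomial bound on $|Q|$ with the $n^{O(n^\eps)}$ denominator bound from property~(2) to control the generators. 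The per-$u$ probability bound, by contrast, is essentially Odlyzko plus a binomial tail and is routine once the independence of off-support rows (a consequence of the support being small) has been noted.
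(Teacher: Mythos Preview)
Your argument is essentially correct but takes a genuinely different route from the paper's. The paper never counts vectors $u$ at all. Instead it observes that if $u$ with support $S$ of size $n_u$ is orthogonal to (say) the first $n-O_{C,\ep}(n^\ep)$ rows of $M_{n-1}$, then by symmetry the rows $\row_1(M_{n-1}),\dots,\row_{n_u}(M_{n-1})$ are linearly dependent on the columns outside $S$; in particular $\row_{n_u}'$, the restriction of $\row_{n_u}$ to the $n-n_u$ unexposed columns, lies in the span of $\row_1',\dots,\row_{n_u-1}'$. Odlyzko's lemma then gives probability at most $2^{2n_u-n-1}$ for this event, and one sums only over supports and over which $O_{C,\ep}(n^\ep)$ rows are the exceptional ones. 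As the paper remarks explicitly, this argument needs \emph{only} the smallness of the support, not the additive structure in property~(1) of $\mathcal{P}$.

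Your approach instead bounds $\P_u$ for each individual $u$ via the off-support rows (each orthogonal with probability at most $1/2$, independently), and then must count how many $u\in\mathcal{P}$ have small support. That count is where you invoke property~(1), passing through Theorem~\ref{theorem:fullrank} to pin the GAP generators into $\{p/q:|p|,|q|\le n^{O(n^\ep)}\}$. This works, and in fact yields the literal sum bound $\sum_u\P_u$ as stated, whereas the paper's argument more naturally bounds $\P(\exists\,u)$. The trade-off is that your route is longer, uses structural information the paper shows is unnecessary here, and introduces the extra hypothesis $\ep<1/2$ (from the $n^{O(n^{2\ep})}$ term in the exceptional-entry count), which the paper's proof does not require. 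Since for the final application one is free to take $\ep$ small, this last point is harmless.
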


\begin{theorem}\label{theorem:total:manynonzeros}
The probability of a random symmetric matrix $M_{n-1}$ having $n-O_{C,\ep}(n^\ep)$ rows being orthogonal to a vector $u\in \mathcal{P}$ having $n_u\ge n^{1-\delta}$ is bounded by

$$\sum_{u\in \mathcal{P}, n_u\ge n^{1-\delta}}\P_u =O(n^{-n^{1-\delta}/32}),$$ 
 
where the implied constants depend on $C,\ep$ and $\delta$. 

\end{theorem}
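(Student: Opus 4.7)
I would fix $u\in\mathcal P$ with $n_u\ge n^{1-\delta}$, bound $\P_u$ via a standard decoupling for random symmetric matrices, and then sum over $u$ using the structural count from properties (1) and (2).

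\textbf{Decoupling and per-row estimate.} Partition $[n-1]=I_1\sqcup I_2$ with $|I_1|,|I_2|\approx (n-1)/2$, chosen by pigeonhole so that $u|_{I_2}$ contains at least $n_u/2\ge n^{1-\delta}/2$ nonzero entries. Condition on the two diagonal block submatrices $M_{n-1}|_{I_j\times I_j}$ for $j=1,2$. By the symmetry $M_{n-1}(i,j)=M_{n-1}(j,i)$, the remaining free entries $\{M_{n-1}(i,j):i\in I_1,\, j\in I_2\}$ form an iid Bernoulli matrix. For each $i\in I_1$, the constraint $\row_i(M_{n-1})\cdot u=0$ becomes
$$
\sum_{j\in I_2} M_{n-1}(i,j)\,u_j \;=\; -c_i,
$$
with $c_i$ determined by the conditioning. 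Applying the Erd\H{o}s-Littlewood-Offord inequality \eqref{eqn:Erdos} to $u|_{I_2}$, this conditional probability is $O(n^{-(1-\delta)/2})$; and because the free entries for distinct $i\in I_1$ are disjoint, these events are conditionally independent.

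\textbf{From per-row to $\P_u$.} The hypothesis that $u$ is orthogonal to $n-O_{C,\ep}(n^\ep)$ rows of $M_{n-1}$ forces orthogonality to at least $|I_1|-O(n^\ep)$ rows in $I_1$. By conditional independence and a union bound over which $\le O(n^\ep)$ rows of $I_1$ are allowed to be ``exceptional'',
$$
\P_u \;\le\; \binom{|I_1|}{O(n^\ep)}\cdot O\!\left(n^{-(1-\delta)/2}\right)^{|I_1|-O(n^\ep)} \;=\; n^{-(1-\delta)n/4+O(n^\ep)}.
$$

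\textbf{Counting and summation.} Using properties (1) and (2), I enumerate $u\in\mathcal P$ with $n_u=m\ge n^{1-\delta}$: choose the GAP and its generators ($n^{O(n^\ep)}$ options, since by (2) generators lie in a rational set of size $n^{O(n^\ep)}$), the locations of the $m$ nonzero positions and the $\le n^\ep$ exceptional ones ($\binom{n-1}{m}\cdot n^{O(n^\ep)}$), the values at the exceptional positions ($n^{O(n^{2\ep})}$), and the values at the non-exceptional positions from the GAP ($n^{O_{C,\ep}(1)\cdot m}$). Multiplying by the per-$u$ bound and summing over $m\ge n^{1-\delta}$ yields $\sum_u\P_u=O(n^{-n^{1-\delta}/32})$, provided $\delta$ is chosen small enough that $\delta\cdot O_{C,\ep}(1)\le \ep/4$, exactly as arranged in the paragraph before the theorem.

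\textbf{Main obstacle.} The delicate point is balancing the per-$u$ decay $n^{-(1-\delta)n/4}$ against the count of $\mathcal P$, which is itself of order $n^{\Theta(n)}$ with constant depending on $C$ and $\ep$. For the decay to dominate, one must carefully track how the rank and size of the underlying GAP enter both factors; in particular, for rank-$r$ GAPs the per-row probability can be sharpened to $O(n^{-r/2})$ by Hal\'asz-type bounds, which compensates for the larger count $|Q|^m$ when $r$ is large, while for small $r$ the count $|Q|^m$ is itself small. The specific exponent $1/32$ in the target reflects the accumulated losses from pigeonholing into $I_2$, from the decoupling combined with Erd\H{o}s-Littlewood-Offord, and from the slack needed to absorb the combinatorial count.
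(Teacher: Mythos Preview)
Your decoupling into $I_1\sqcup I_2$ and the conditional independence of the off-diagonal rows is fine, but the argument does not close. The per-row bound you obtain from Erd\H{o}s--Littlewood--Offord is only $O(n^{-(1-\delta)/2})$, giving $\P_u\le n^{-(1-\delta)n/4+O(n^\ep)}$. On the other side, your enumeration of $u$ in the ``non-exceptional'' positions contributes a factor of $|Q|^{m}$ with $|Q|=n^{O_{C,\ep}(1)}$ and $m$ potentially as large as $n-1$, so the count is $n^{O_{C,\ep}(1)\cdot n}$. There is no reason for $(1-\delta)/4$ to dominate the constant $O_{C,\ep}(1)$ coming from the GAP size in property~(1); indeed that constant can be arbitrarily large in $C$. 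So the product in your summation step is not $o(1)$ --- it is exponentially large in $n$ for large $C$. You flag exactly this in your ``Main obstacle'' paragraph, but the fix you propose does not work: there is no Hal\'asz-type inequality that turns membership in a rank-$r$ GAP of size $n^{O_{C,\ep}(1)}$ into a per-row bound of $O(n^{-r/2})$; a rank-$1$ progression of large size already has concentration $\Theta(n^{-1/2})$, so you cannot trade rank against size in the way you suggest.

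The paper resolves this by a different mechanism: rather than using a single fixed GAP for all of $u$, it stratifies the components by the value of the partial concentration function $\rho_i(u)=\sup_a\P(\sum_{j\ge i}x_ju_j=a)$ into geometric scales $[b_{j-1},b_j)$ (the ``concentration sequence''). At each scale it reapplies the \emph{inverse} Littlewood--Offord theorem (Theorem~\ref{theorem:ILO}) to place the components at that scale into a GAP $Q_j$ of size $O(b_j^{-1}n^\delta/n^{\ep/2})$. The probability bound $\prod_j b_j^{m_j}$ then \emph{cancels termwise} against the count $\prod_j|Q_j|^{m_j}$, leaving only the residual $(n^{\delta-\ep/2})^{(1-\delta)n_0}$, which is genuinely small because $\delta$ was chosen with $\delta\cdot O_{C,\ep}(1)\le\ep/4$. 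The point is that the GAP sizes are adapted to the concentration probabilities, not fixed once and for all; this coupling is exactly what your argument is missing.
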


\begin{proof}(of Theorem \ref{theorem:total:manyzeros}) By paying a factor $\binom{n-1}{O_{C,\ep}(n^\ep)}=O(n^{O_{C,\ep}(n^\ep)})$ in probability, we may assume that the first $n-O_{C,\ep}(n^\ep)$ rows of $M_{n-1}$ are orthogonal to $u$. 

Also, by paying a factor $\binom{n}{n_u}$ in probability, we may assume that the first $n_u$ components of $u$ are nonzero. Thus we have

$$\sum_{i=1}^{n_u} u_{i}\row_{i}(M_{n-1})=0.$$

Which in turn implies that $\row_{n_u}(M_{n-1})$ lies in the subspace spanned by $\row_{1}(M_{n-1}),\dots,\row_{n_u-1}(M_{n-1})$.

Next, due to symmetry, $\row_{n_u}(M_{n-1})$ has $n_u-1$ components that were already exposed in the first $n_u-1$ rows (if we work with the general case that the rows in consideration are not necessarily the first $n_u$ rows of $M_{n-1}$, then there are less dependencies: at most  $n_u-1$ components already exposed in the previous $n_u-1$ rows.) 

Let $\row_{n_u}'$ be the subvector obtained from $\row_{n_u}$ by removing the exposed components, and for each $1\le i \le n_u-1$ we let $\row_{i}'$ be the subvectors of $\row_i(M_{n-1})$ corresponding to the columns restricted by $\row_{n_u}'$. 

By definition, each $\row_i'$ has $n-n_u$ components, and because $\row_{n_u}(M_{n-1})$ lies in the subspace spanned by $\row_{1}(M_{n-1}),\dots,\row_{n_u-1}(M_{n-1})$, so does $\row_{n_u}'$ in the subspace spanned by $\row_1',\dots, \row_{n_u-1}'$.  The probability for this event, by Lemma \ref{lemma:Odlyzko}, is at most 

$$2^{n_u-1-(n-n_u)}=2^{2n_u-n-1}.$$ 

Thus we have

$$\sum_{u\in \mathcal{P}, n_u\le n^{1-\delta}}\P_u \le \sum_{n_u=1}^{n^{1-\delta}} n^{O_{C,\ep}(n^\ep)} \binom{n}{n_u} 2^{2n_u-n} = O\left((1/2)^{(1-o(1))n}\right),$$

where the implied constants depend on $C,\ep$ and $\delta$. 

\end{proof}

\begin{remark}
In the proof of Theorem \ref{theorem:total:manyzeros}, because the assumption that $u$ has many zero components is strong, we do not need the additional additive structure on the remaining components of $u$. 
\end{remark}

We next focus on the estimate for the minor term.

\begin{proof}(of Theorem \ref{theorem:total:manynonzeros}) By paying a factor of $\binom{n-1}{n_u} \binom{n_u}{n^\ep}$ in probability and without loss of generality, we may assume that $u$ has the following properties: 

\begin{itemize}
\item the first $n_u$ components of $u$ are nonzero;

\vskip .1in

\item the first $n_0:=n_u-n^\ep$ components of $u$ are non-exceptional (that is they all belong to a proper symmetric GAP of rank $O_{C,\ep}(1)$ and size $n^{O_{C,\ep}(1)}$.)
\end{itemize}

 Because $u$ is orthogonal to $n-O_{C,\ep}(n^\ep)$ rows of $M_{n-1}$, it is orthogonal to $n_1:=n_0-O_{C,\ep}(n^\ep)$ rows among the first $n_0$ rows of $M_{n-1}$. By paying a factor of $\binom{n_0}{O_{C,\ep}(n^\ep)}= O(n_u^{O_{C,\ep}(n^\ep)})$ in probability, we may assume that these are the first $n_1$ rows of $M_{n-1}$. (One proceed similarly in the general case, occasionally with better bounds due to more independence among the entries.)

We will expose the first $n_1$ rows of $M_{n-1}$ one by one. Let $i$ be an index between $1$ and $n_1$. Condition on the first $i-1$ rows of $M_{n-1}$, the probability that $\row_i(M_{n-1})$ is orthogonal to $u$ is controlled by 

$$\P_{x_i,\dots,x_{n_u}\in \{-1,1\}}(\sum_{j=i}^{n_u} x_ju_j =-\sum_{j=1}^{i-1}x_ju_j) \le$$ 

$$\le \rho_i(u):=\sup_{a\in \R}\P_{x_i,\dots, x_{n_0}\in \{-1,1\}}(\sum_{j=i}^{n_0} x_ju_j=a).$$  

Observe that $\rho_1(u)\le \dots \le \rho_{n_1}(u)$. With room to spare, we concentrate on $\rho_i(u)$ where $i\le (1-\delta)n_0$ only. 

Note that $(1-\delta)n_0 <n_1$, thus the probability that the first $n_1$ rows of $M_{n-1}$ are orthogonal to $u$ is bounded by

\begin{equation}\label{eqn:singularity:orthogonalbound}
\P \left( \row_i(M_{n-1}), 1\le i \le (1-\delta)n_0, \mbox{ are orthogonal to } u \right) \le \prod_{i=1}^{(1-\delta)n_0} \rho_i.
\end{equation}

Note that the nonzero $u_j, j=1,\dots,n_0$, all belong to a proper symmetric GAP of rank $O_{C,\ep}(1)$ and size $n^{O_{C,\ep}(1)}$. It thus follows from the Erd\H{o}s-Littlewood-Offord inequality \eqref{eqn:Erdos} and Example \ref{example:linear:GAP}) that for any $1\le i \le (1-\delta)n_0 $   

\begin{equation}\label{eqn:singularity:concentrationbound}
 n^{-O_{C,\ep}(1)} \le \rho_i(u)=O((\delta n_0)^{-1/2}) = O((\delta n_u)^{-1/2}).
\end{equation}

Next we fix a sequence $b_0, b_1,\dots,b_K$, where $b_0=n^{-O_{C,\ep}(1)}$ is the left bound of \eqref{eqn:singularity:concentrationbound} and $b_{i+1}:=n^{\delta} b_i$, as well as $K$ is the smallest index such that $b_K$ exceeds the right bound of \eqref{eqn:singularity:concentrationbound} (thus $K\le O_{C,\ep}(1) \delta^{-1}$).

By the definition of the sequence $b_i$, for any $1\le i \le (1-\delta)n_0$ we have

$$b_0 \le \rho_i(u) \le b_K.$$

In the next step, we classify $u$ depending on how fast the concentration probabilities $\rho_i(u)$ grow.

\begin{definition}[concentration sequence]
We say that a $u\in \mathcal{P}$ satisfying $n_u \ge n^{1-\delta}$ has concentration sequence $(m_1,\dots, m_K)$, where $m_1+\dots+m_K=(1-\delta)n_0$,  if there are exactly $m_j$ terms $\rho_i(u)$ belonging to $[b_{j-1}, b_j)$. 

\end{definition}

Observe that the smaller $\delta$ we choose, the more detail we know about the distribution of $\rho_i(u)$.

Basing on concentration sequences, we say that $u \in \mathcal{P}$ belongs to $\mathcal{P}_{(m_1,\dots,m_K)}$ if its concentration sequence is $(m_1,\dots,m_K)$.

Our next lemma is to show that there is a collection of structures that contains all the elements of $\mathcal{P}_{(m_1,\dots,m_K)}$. This result will then enable us to compute $\P_u$ in a convenient way.

\begin{theorem}\label{theorem:concentrationsq}
Assume that $u\in \mathcal{P}_{(m_1,\dots,m_K)}$. Then there exists a sequence of proper symmetric GAPs $Q_0, Q_1,\dots, Q_{K}$ such that 

\begin{enumerate}

\item $u_i\in Q_0$ for all $1\le i \le n_0$;

\vskip .1in

\item $u_j \in Q_i$ for all but $n^\ep/K$ indices $j$ with $m_1+\dots + m_{i-1} \le j < m_1+\dots+ m_{i}$;

\vskip .1in

\item $|Q_i|\le c b_i^{-1}n^\delta/(n^\ep)^{1/2}$, where $c$ is a constant depending only on $C,\ep$ and $\delta$;

\vskip .1in

\item all the generators of $Q_i$ belong to the set $\{p/q, |p|,|q|\le n^{O_{C,\ep,\delta}(n^\ep)}\}$.

\end{enumerate}

\end{theorem}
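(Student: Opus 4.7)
The plan is to produce the proper symmetric GAPs $Q_0, Q_1, \dots, Q_K$ by applying the optimal inverse Littlewood-Offord theorem (Theorem~\ref{theorem:ILO}) to successively longer tails of the sequence $u_1, \dots, u_{n_0}$, leveraging the jumps in the concentration sequence to control the GAP sizes.

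For $Q_0$, I would take the proper symmetric GAP of rank $O_{C,\ep}(1)$ and size $n^{O_{C,\ep}(1)}$ furnished by property (1) of $u \in \mathcal{P}$ (which says that all non-exceptional components of $u$ fit inside such a GAP). Under the standing WLOG reduction that the first $n_0 = n_u - n^\ep$ coordinates of $u$ are the non-exceptional ones, this gives $u_1, \dots, u_{n_0} \in Q_0$. The bound $|Q_0| \le c b_0^{-1} n^\delta (n^\ep)^{-1/2}$ is automatic, since $b_0 = n^{-O_{C,\ep}(1)}$ already forces $b_0^{-1} \gg n^{O_{C,\ep}(1)}$.

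For $i = 1, \dots, K$, set $t_i := m_1 + \dots + m_{i-1}$, so that by the definition of the concentration sequence $\rho_{t_i+1}(u) \in [b_{i-1}, b_i)$. I would apply Theorem~\ref{theorem:ILO} to the truncated sequence $u_{t_i+1}, \dots, u_{n_0}$ with target exception bound $n^\ep/K$; this is admissible after lowering the exponent from $\ep$ to some $\ep' < \ep$ with $n^{\ep'} \le n^\ep/K$, which is permissible because $K = O_{C,\ep,\delta}(1)$, and the tail still has at least $n_0 - t_i \ge \delta n_0 \gg n^\ep$ entries. The theorem produces a proper symmetric GAP $Q_i$ of rank $r = O_{C,\ep}(1)$ such that all but at most $n^\ep/K$ tail entries lie in $Q_i$, with
\[ |Q_i| \;=\; O\!\bigl(\rho_{t_i+1}(u)^{-1} (n^\ep/K)^{-r/2}\bigr) \;\le\; c\, b_{i-1}^{-1} (n^\ep)^{-1/2} \;=\; c\, n^\delta b_i^{-1} (n^\ep)^{-1/2}, \]
using $\rho_{t_i+1}(u)^{-1} \le b_{i-1}^{-1}$, the identity $b_{i-1} = b_i / n^\delta$, and absorbing $K^{r/2}$ and the extra $(n^\ep)^{(r-1)/2}$ into $c$. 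Property (2) then follows because the $i$-th bucket is the head of the truncated sequence.

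For property (4), I would use property (2) of $u$, namely $u_j \in \{p/q : |p|,|q| = n^{O_{C,\ep}(n^\ep)}\}$. After appealing to Theorem~\ref{theorem:fullrank} to assume that the kept tail entries span $Q_i$, I would express each generator of $Q_i$ as a rational combination of $O_{C,\ep,\delta}(1)$ many of the $u_j$'s, via the same Cramer-rule calculation used in the proof of Lemma~\ref{lemma:rationalcommensurability}; this yields generators of the form $p/q$ with $|p|,|q| \le n^{O_{C,\ep,\delta}(n^\ep)}$. The main obstacle is precisely this last step: Theorem~\ref{theorem:ILO} as a black box does not hand us generators with controlled rational form, so one must either open up its proof to see that the generators may be chosen among rational combinations of the inputs, or interpose a separate commensurability argument in the spirit of Lemma~\ref{lemma:rationalcommensurability}. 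The remaining bookkeeping issues---the off-by-one between $b_{i-1}$ and $b_i$ (absorbed into the $n^\delta$ slack), the mild shrinking of $\ep$ to accommodate the $1/K$ factor, and checking that the tail is long enough to invoke Theorem~\ref{theorem:ILO}---are routine.
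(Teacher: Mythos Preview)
Your proposal is correct and follows essentially the same approach as the paper: set $Q_0$ to be the GAP from property (1) of $\mathcal{P}$, and for each $i$ apply Theorem~\ref{theorem:ILO} to the tail $u_{m_1+\dots+m_{i-1}+1},\dots,u_{n_0}$ with exception parameter $n^\ep/K$, then invoke Theorem~\ref{theorem:fullrank} so that the surviving $u_j$ span $Q_i$, forcing the generators into $\{p/q : |p|,|q|\le n^{O(n^\ep)}\}$ by property (2) of $\mathcal{P}$. Your discussion of property~(4) is in fact more explicit than the paper's, which simply remarks that spanning plus rationality of the $u_j$ suffices; one small cosmetic point is that for $r\ge 1$ the factor $(n^\ep)^{-(r-1)/2}$ only helps, so there is nothing to absorb there.
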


Theorem \ref{theorem:concentrationsq} can be shown by applying Theorem \ref{theorem:ILO} several times. To begin with, we set $Q_0$ to be the proper symmetric GAP that contains all the non-exceptional $u$.  

Next, as 

$$\rho_{m_1+\dots+m_{i-1}} \ge b_{i-1} =O(n^{-O(1)})$$ 

Theorem \ref{theorem:ILO} implies that all but at most $n^\ep/K$ components $u_j$, where $m_1+\dots+m_{i-1}\le j\le (1-\delta)n_0$, belong to a proper symmetric GAP $Q_i$ of size  

\begin{align*}
O_{C,\ep,\delta}(\rho_{m_1+\dots+m_{i-1}}^{-1}/(n^\ep)^{1/2}) &=O_{C,\ep,\delta}(b_{i-1}^{-1} n^\delta/(n^\ep)^{1/2})\\
&= O_{C,\ep,\delta}(b_i^{-1} n^\delta/(n^\ep)^{1/2}).
\end{align*}

We keep this information only for those $u_j$ where $m_1+\dots + m_{i-1} \le j < m_1+\dots+ m_{i}$, and release other $u_j$ for the next application of Theorem \ref{theorem:ILO}. By Theorem \ref{theorem:fullrank}, we may assume that these $u_j$ span $Q_i$, and thus (4) holds because $u_j\in \{p/q, |p|,|q|=n^{O_{C,\ep,\delta}(n^\ep)}\}$.

Now for each $u\in \mathcal{P}_{(m_1,\dots, m_K)}$, we reconsider the probability that the first $n_1$ rows of $M_{n-1}$ are orthogonal to $u$. As shown in \eqref{eqn:singularity:orthogonalbound}, this probability is bounded by $\prod_{i}\rho_i$. By definition of concentration sequence, we have

\begin{equation}\label{eqn:singularity:probability}
\prod_{i=1}^{(1-\delta)n_0} \rho_i \le \prod_{i=1}^K b_i^{m_i}.
\end{equation}

In the next sequel we want to sum this bound over $u\in \mathcal{P}_{(m_1,\dots,m_K)}$.

Because each $Q_i$ is determined by its $O_{C,\ep,\delta}(1)$ generators from the set $\{p/q , |p|,|q|\le n^{O_{C,\ep,\delta}(n^\ep)}\}$, and its dimensions from the integers bounded by $n^{O_{C,\ep,\delta}(1)}$, there are $n^{O_{C,\ep,\delta}(n^\ep)}$ ways to choose each $Q_i$. So the total number of ways to choose $Q_0,\dots, Q_K$ is 

\begin{equation}\label{eqn:singularity:Q}
(n^{O_{C,\ep,\delta}(n^\ep)})^K = n^{O_{C,\ep,\delta}(n^\ep)}.
\end{equation}

Next, after locating $Q_i$, the total number of ways to choose is 

$$\prod_{i=1}^K\binom{m_i}{n^\ep/K} |Q_i|^{m_i-n^\ep/K} \le 2^{m_1+\dots+m_K}\prod_{i=1}^K|Q_i|^{m_i}=2^{(1-\delta)n_0}\prod_{i=1}^K|Q_i|^{m_i},$$

where $\binom{m_i}{n^\ep/k}|Q_i|^{m_i-n^\ep/K} $ is the number of ways to choose $u_j$ from each $Q_i$, following (2) of Theorem \ref{theorem:concentrationsq}. 

We then continue to estimate

\begin{eqnarray}\label{eqn:singularity:nonexceptional:1}
2^{(1-\delta)n_0}\prod_{i=1}^K|Q_i|^{m_i} &\le& (2c)^{(1-\delta)n_0} \prod_{i=1}^{K} (b_i^{-1}n^\delta/(n^\ep)^{1/2})^{m_i} \nonumber \\ 
&=& (2c)^{(1-\delta)n_0}\prod_{i=1}^K b_i^{-m_i} n^{\delta(1-\delta)n_0} n^{-\ep (1-\delta)n_0/2} \nonumber \\
&=& O(\prod_{i=1}^K b_i^{-m_i} n^{-\ep n_0/4}),
\end{eqnarray}

where in the last estimate we use the fact that $\delta \le \ep/16$.

For the remaining non-exceptional $u_i$, where $(1-\delta)n_0 \le i \le n_0$ or $u_j\notin Q_i$ from (2) of Theorem \ref{theorem:concentrationsq}, we choose them from $Q_0$, which results in the bound

\begin{equation}\label{eqn:singularity:nonexceptional:2}
b_0^{\delta n_0+n^\ep} = n^{\delta O_{C,\ep}(1) n_0}  \le n^{\ep n_0/8},
\end{equation}

where we use the fact that $\delta$ is chosen so that $\delta \cdot O_{C,\ep}(1) \le \ep/16$, and $n^\ep=o(n^{1-\delta})=o(n_0)$.

Regarding the exceptional elements $u_i$, where $n_0<i\le n_u$, we may choose them from  $\{p/q, |p|,|q|\le n^{O_{C,\ep}(n^\ep)}\}$, which results in the bound

\begin{equation}\label{eqn:singularity:exceptional}
(n^{O_{C,\ep}(n^\ep)})^{2n^\ep} = n^{O_{C,\ep}(n^{2\ep})}.
\end{equation}

Putting the estimates \eqref{eqn:singularity:Q}, \eqref{eqn:singularity:nonexceptional:1}, \eqref{eqn:singularity:nonexceptional:2} and \eqref{eqn:singularity:exceptional} together we obtain the bound for total number of ways to choose $u$ 

$$ n^{O_{C,\ep,\delta}(n^{2\ep})} n^{-\ep n_0/8} \prod_{i=1}^K b_i^{-m_i} \le O(n^{-\ep n_0/16})  \prod_{i=1}^K b_i^{-m_i},$$

where we use the fact that $n^{2\ep} =o(n^{1-\delta})=o(n_0)$.

Thus, according to \eqref{eqn:singularity:probability} we have

$$\sum_{u\in \mathcal{P}_{(m_1,\dots,m_k)}} \P \left( \row_i(M_{n-1}), 1\le i \le (1-\delta)n_0, \mbox{ are orthogonal to } u \right) = O(n^{-\ep n_u/16}).$$

Summing over the number of concentration sequences $(m_1,\dots,m_k)$ (which can be bounded cheaply by $n^K = n^{O_{C,\ep}(1)\delta^{-1}}$), over the positions of $n_u$ nonzero components and $n_0$ non-exceptional components of $u$ (which is bounded by $O(\binom{n-1}{n_u} \binom{n_u}{n^\ep})$), and over the position of $n_1$ rows of $M_{n-1}$ that are orthogonal to $u$ (which is bounded by $O(n_u^{O_{C,\ep}(n^\ep)})$), we hence obtain

$$\sum_{u\in \mathcal{P}, n_u\ge n^{1-\delta}}\P_u = O(n^{-\ep n^{1-\delta}/32}),$$

where the implied constant depends on $C,\ep$ and $\delta$, completing the proof.

\end{proof}

\vskip .2in

{\bf Acknowledgements.} The author would like to thank R. Pemantle and V. Vu for enthusiastic encouragement. He is grateful to the referees for careful reading the paper and providing very helpful remarks.

%\end{changemargin}

\end{document}